\theoremstyle{plain}
\newtheorem{theorem}{Theorem}[section]
\newtheorem{proposition}[theorem]{Proposition}
\newtheorem{cor}[theorem]{Corollary}
\newtheorem{lemma}[theorem]{Lemma}
\theoremstyle{remark}
\theoremstyle{remark}
\newtheorem{defin}[theorem]{Definition}
\newcommand{\R}{\mathbb{R}}
\newcommand{\N}{\mathbb{N}}
\newcommand{\K}{\mathbb{K}}
\newcommand{\adef}{\begin{defn}}
\newcommand{\zdef}{\end{defn}}
\newtheorem{defn}[theorem]{Definition}
\def\Hom{\operatorname{Hom}}
\def\Ext{\operatorname{Ext}}
\def\supp{\operatorname{supp}}
\def\PB{\operatorname{PB}}
\def\PO{\operatorname{PO}}
\def\Ext{\operatorname{Ext}}
\def\Hom{\operatorname{Hom}}
\newcommand{\aproof}{\begin{proof}}
\newcommand{\zproof}{\end{proof}}
\newcommand{\To}{\longrightarrow}
\title{On the ${\Ext}^2$-problem for Hilbert spaces}
\author[Cabello]{F\'elix Cabello S\'anchez}
\address{(FC, JC, RG) Instituto de Matem\'aticas Imuex\\ Universidad de Extremadura\\
Avenida de Elvas\\ 06071-Badajoz\\ Spain} \email{fcabello@unex.es\\
castillo@unex.es, rgarcia@unex.es}
\author[Castillo]{Jes\'{u}s M.F. Castillo}
\author[Corr\^ea]{Willian H. G. Corr\^ea}
\address{(WC, VF) Departamento de Matem\'atica, Instituto de Matem\'atica e
Estat\'\i stica, Universidade de S\~ao Paulo, rua do Mat\~ao 1010,
05508-090 S\~ao Paulo SP, Brazil} \email{willhans@ime.usp.br, ferenczi@ime.usp.br}
\author[Ferenczi]{\\Valentin Ferenczi}
\address{(VF) Equipe d'Analyse Fonctionnelle \\
Institut de Math\'ematiques de Jussieu \\
Universit\'e Pierre et Marie Curie - Paris 6 \\
Case 247, 4 place Jussieu \\
75252 Paris Cedex 05 \\
France.} 
\author[Garc\'ia]{Ricardo Garc\'ia}
\thanks{The research of the first, second and fifth authors was supported by Project MINCIN MTM2016-76958-C2-1-P and Project IB16056 de la Junta de Extremadura. The third author was supported by S\~{a}o Paulo Research Foundation (FAPESP), processes 2016/25574-8 and 2018/03765-1. The fourth author was
supported by CNPq, grant 303034/2015-7 and Fapesp, grant
2016/25574-8.}
\subjclass[2010]{46B25,46M18}
\begin{document}



\maketitle
\begin{abstract} We show that $\Ext^2(\ell_2, \ell_2)\neq 0$ in the category of Banach spaces. This solves a sharpened version of Palamodov's problem
and provides a solution to the second order version of Palais problem. We also show that $\Ext^2(\ell_1, \K)\neq 0$ in the category of quasi Banach spaces, which solves the four-space problem for local convexity. \end{abstract}

\section{Introduction}
This paper is devoted to solving the main problem left open in \cite{cck} and \cite{castrica}: Is $\Ext^2(\ell_2, \ell_2)=0$? To answer the
question means to decide whether every four-term exact sequence of Banach spaces and operators
$$\begin{CD} 0 @>>> \ell_2 @>>> E @>>> F @>>> \ell_2 @>>> 0\end{CD}$$
is trivial, that is, equivalent to zero in $\Ext^2(\ell_2, \ell_2)$. The difficulties to do this are of two kinds:
\begin{itemize}
\item[(a)] How to construct such an object.
\item[(b)] How to decide if it is trivial or not.
\end{itemize}
What makes (a) so difficult is the perfect homogeneity of Hilbert space. General methods provide examples of Banach spaces $X, Y$ so that $\Ext^2(X,Y)\neq 0$ usually appealing to some incomparability property between projective and injective presentations, or between $X$ and $Y$, or their (complemented) subspaces or quotients. In the case of $\ell_2$ nothing works: the space is self-similar in any way, as well as its subspaces, quotients or dual. Moreover, attempts to use the standard reduction of degree  technique fail because very few things are known either about $K^1(\ell_2)$,
the kernel of a quotient map $\ell_1\To \ell_2$ or its dual space $\ell_\infty/\ell_2$. In particular, no known technique allows one to obtain a nontrivial elements in either $\Ext(K^1(\ell_2), \ell_2)$ or $\Ext(\ell_2, \ell_\infty/\ell_2)$. In turn, what makes (b) difficult is that, when $n\geq 2$, the equivalence relation in $\Ext^n$  is not easy to handle, to say the least.
\smallskip

In spite of all this we will show that $\Ext^2(\ell_2,\ell_2)\neq 0$  by means
of a counterexample which is at once surprising and optimal in every sense: optimal because, first, it has the simplest possible form
\begin{equation}\label{eq:z2z2}
\xymatrixrowsep{0.5pc}
\xymatrix{0\ar[r] & \ell_2\ar[r] & \widetilde{Z}_2  \ar[rr]  \ar[dr]  &&Z_2 \ar[r] &\ell_2\ar[r]& 0 \\
& && \ell_2 \ar[ur]   }
\end{equation}
of two twisted Hilbert sequences \emph{spliced}. Second, because the short exact sequence on the right $0\To \ell_2 \To Z_2\To \ell_2\To 0$ is Kalton-Peck's celebrated example \cite{kaltpeck}, while the one on the left $0\To \ell_2 \To \widetilde{Z}_2\To \ell_2\To 0$ is a certain vector-valued form of the Kalton-Peck sequence. And third, because the construction preserves unconditionality in the sense that Diagram~\ref{eq:z2z2} naturally belongs to the category of Banach modules over $\ell_\infty$, which considerably expands its range of applications. And surprising 
because this seems to be the only occasion in which one disproves a (serious) conjecture made by Kalton \cite{confe}.\medskip

After all this propaganda, let us explain the organization of the paper and highlight the main results. Section 2 is preliminary. It contains
the necessary background about $\Ext=\Ext^1$, quasilinear maps and centralizers, as well as some ancillary material on $\Ext^2$. Nonlinear maps find their way here to allow the construction and study of the short exact sequences later spliced to get the required four-term sequence. The basic idea is that each
exact sequence $0\To Y \To Z\To X\To 0$ corresponds to a
quasilinear map $\Phi:X\To Y$, out from which the middle space $Z$ can be reconstructed.
The price we pay for this simplification is that our default setting has to be the category of quasi Banach spaces, even if we are mostly interested in Banach spaces. 

Section~\ref{sec:crit} contains the main criteria for the (non) triviality of sequences in $\Ext^2$. The equivalence relation of $\Ext^n$ is quite elusive when $n\geq 2$. However, a small symmetry miracle occurs at $n=2$
which provides a criterion, both visual from the diagrammatic point of view and operative
from the quasilinear point of view. We then develop some ad-hoc estimates which not only render the basic criterion manageable, but even suggest the right form of the counterexamples.\smallskip

Armed with these tools we tackle in Section~\ref{sec:counter} the task of proving the nontriviality of (\ref{eq:z2z2}). This is achieved by means of a
selective elimination of the symmetries of $Z_2$  and
 a combinatorial argument involving partitions.\smallskip

Section~\ref{sec:appl} contains a number of applications, ranging from operator theory to the (non) vanishing of $\Ext^2$ both in Banach modules and quasi Banach spaces, including:

\begin{itemize}
\item That $\Ext^2(X,Y)\neq 0$ for all Banach spaces $X$ and $Y$ containing $\ell_2^n$ uniformly complemented.
\item A proof that every four-term exact sequence obtained by splicing two copies of the same centralizer on $\ell_p$ is trivial in $\Ext^2(\ell_p,\ell_p)$; and that the same occurs with the Enflo-Lindenstrauss-Pisier quasilinear map \cite{elp}, the first quasilinear map appearing in Banach space theory, which is not a centralizer.
\item Some remarks on the Yoneda product $\Ext(\ell_p,\ell_p)\times \Ext(\ell_p,\ell_p)\To \Ext^2(\ell_p,\ell_p)$ for module extensions.
\item A proof that $\Ext^2(\ell_p,\ell_p)$ is not zero in the category of quasi Banach modules over $\ell_p$.
\item A proof that $\Ext^2(\ell_1,\K)$ is not zero in in the category of quasi Banach spaces. Precisely, a nontrivial exact sequence of quasi Banach spaces $0\To \K \To R \To B \To \ell_1\To 0$ where $\K$ is the ground field and $R$ is Ribe' space \cite{ribe}.
\end{itemize}

The problem addressed in this paper can be regarded as a sharpened version of Palamodov \cite[Problem 6]{pala}: is $\operatorname{Ext}^2(\cdot,E)=0$ for \emph{any} Fr\'echet space? Counterexamples were provided by Wengenroth \cite[pp. 177--178]{jwenge}, and
then in \cite{castrica}. In the first case the example has the form
$$
\xymatrixrowsep{0.5pc}
\xymatrix{0\ar[r] & U^* \ar[r] & L_\infty  \ar[rr]  \ar[dr]_{\imath^*}  && L_1 \ar[r] &U\ar[r]& 0 \\
& && \ell_2 \ar[ur]_\imath   }
$$
where $\imath$ is an isomorphic embedding,
while in the second case the example is
$$
\xymatrixrowsep{0.5pc}
\xymatrix{0\ar[r] & \ell_2\ar[r] & X  \ar[rr]  \ar[dr]  && L_1 \ar[r] &U\ar[r]& 0 \\
& && \ell_2 \ar[ur] _\imath  }
$$
where  $0\To \ell_2\To X \To \ell_2\To 0$ is a nontrivial twisted Hilbert space.
In both cases an uncontrolled space $U=L_1/\imath[\ell_2]$ appears that cannot be reduced to anything reasonable, let alone to a Hilbert space. Thus, the question of the vanishing of $\Ext^2(\ell_2, \ell_2)$ remained untractable by general methods.

The introduction of homological methods in Banach space theory was fueled by the attempt to solve what is known as Palais problem: does there exist a twisted Hilbert space that is not (isomorphic to) a Hilbert space? The existence of such object was first proved by Enflo, Lindenstrauss and Pisier \cite{elp}, but the construction which is of paramount importance to our purposes is that of the
Kalton-Peck space $Z_2$ appeared soon later in \cite{kaltpeck}.
Thus, our proof that $\Ext^2(\ell_2, \ell_2)\neq 0$ can be viewed as a solution to the ``second order'' Palais problem. 
The closely related fact that $\Ext^2(\ell_1, \K)\neq 0$, within the category of quasi Banach spaces, can be viewed as an optimal solution of the ``four-spaces problem'' for local convexity. The classical ``three-space problem'' for local convexity was solved independently and almost simultaneously by  Roberts \cite{robe}, Ribe \cite{ribe} and Kalton~\cite{kalt} in the late seventies.

\section{Preliminaries}\label{sec:pre}
Although most of the time we deal with Banach spaces which are in fact quite similar to Hilbert spaces, the natural setting for our study is the category of quasi Banach spaces and linear bounded operators, that we denote by $\mathsf Q$ when necessary. The subcategory of Banach spaces is denoted by $\mathsf{B}$. Our main examples share a certain unconditional structure that makes them into quasi Banach modules over the Banach algebra $\ell_\infty$ in a natural way and so it will also be convenient to consider the category of quasi Banach $\ell_\infty$-modules and linear bounded homomorphisms as well. We are aware that this approach may annoy some readers, but we hope they will feel more comfortable as the paper progresses. By the way of compensation we have made a great effort to present all our results avoiding interpolation theory (compare, when it corresponds, with \cite{cck}).

A presentation of the basic elements of (quasi) Banach space theory, including quasilinear maps and twisted sums, very akin to the approach of this paper, can be found in \cite{kalthb}. The article \cite{k-m} explains the connections between centralizers and interpolation theory and many other things.

\subsection{Quasi Banach spaces and modules}
A quasinorm on a linear space $X$ is a function $\|\cdot\|:X\To [0,\infty)$ satisfying the following conditions: 
\begin{itemize}
\item $\|\lambda x\|=|\lambda|\|x\|$ for every $x\in X$ and every scalar $\lambda$; $\|x\|=0$ if and only if $x=0$.
\item There is a constant $C$ such that $\|x+y\|\leq C\big(\|x\|+\|y\|\big)$ for every $x,y\in X$.
\end{itemize}
A quasinormed space is a linear space $X$ equipped with a quasinorm. Such a space carries a linear topology for which the unit ball $\{x:\|x\|\leq1\}$ is a neighbourhood of zero. If the resulting topological vector space is complete we call it a quasi Banach space. 
A quasinormed module over a Banach algebra $A$ is a quasinormed space $X$ together with a jointly continuous outer product $A\times X\To X$ satisfying the traditional algebraic requirements. A quasi Banach module is a complete quasinormed module.

If $X$ and $Y$ are quasinormed modules over $A$, an homomorphism $f:X\To Y$  is an operator such that $f(ax)=a(f(x))$ for every $x\in X$ and $a\in A$.

The only Banach algebra that we need in this paper is the algebra $\ell_\infty$ of bounded functions $a:\N\To\K$ endowed with the sup norm.

\subsection{Homology}
We assume from the reader some acquaintance with the basic elements of homology as expounded in the classic books \cite{hiltstam, lane} or in the functional analysis-oriented notes \cite{ecfa}.

Let us however fix the notation by recalling a few definitions.
A (finite or infinite) sequence of quasi Banach spaces and operators
$$
\xymatrixcolsep{3pc}
\xymatrix{
\cdots \ar[r] & E_{n-1}\ar[r]^{u_{n-1}}& E_{n}\ar[r]^{u_n} & E_{n+1}\ar[r] &\cdots
}
$$
is  exact when the kernel of each operator agrees with the image of the preceding one:  $u_{n-1}[E_{n-1}]=\ker u_n$ for every $n$ under consideration.

Let us first consider short exact sequences, that is, exact sequences of the form
\begin{equation}\label{eq:YAX}
\xymatrix{
0 \ar[r] & Y \ar[r]^\imath & E \ar[r]^\pi & X\ar[r] & 0
}
\end{equation}
The reason behind this notation is that one treats the end spaces $X$ and $Y$ as ``fixed'' while the middle space $E$ is considered ``variable''. In this setting the sequence $\xymatrix{
0 \ar[r] & Y \ar[r] & F \ar[r] & X\ar[r] & 0
}
$
is equivalent to (\ref{eq:YAX}) if there is an operator $u$ making the following diagram commutative:
$$
\xymatrix{
0 \ar[r] & Y \ar[r] \ar@{=}[d] & E \ar[r]\ar[d]^u & X\ar[r]\ar@{=}[d] & 0\\
0 \ar[r] & Y \ar[r] & F \ar[r] & X\ar[r] & 0
}
$$
This is really an equivalence relation since a fortunate combination of algebra (the five-lemma, see \cite[Lemma 1.1]{hiltstam} or \cite[Corollary 3.23]{ecfa}) and  topology (the open mapping theorem) guarantees that $u$ is a linear homeomorphism. 
Then
 $\Ext(X,Y)$ is defined as the set of short exact sequences of the form (\ref{eq:YAX}) modulo equivalence. This set can be given a natural (= functorial) linear structure via pull-back and push-out constructions, called Baer' sum, that the reader can see in \cite[Chapter IV, \S~9]{hiltstam} or \cite[Chapter 6]{ecfa}. The zero element of  $\Ext(X,Y)$ is the (class of the) trivial sequence $\xymatrix{
0 \ar[r] & Y \ar[r] & Y\oplus X \ar[r] & X\ar[r] & 0
}
$ with the obvious operators. It turns out that (\ref{eq:YAX}) is trivial if and only if it splits, in the sense that there is an operator $P:E\To Y$ such that $P\,\imath= {\bf I}_Y$ or, equivalently, there is an operator $J: X\To E$ such that $\pi\, J={\bf I}_X$.
\smallskip

The definition of $\Ext^2(X,Y)$ starts the same. 
Given two exact sequences $0\To Y\To E_1\To E_2\To X\To 0$ and $0\To Y\To F_1\To F_2\To X\To 0$, denoted by $\mathscr E$ and $\mathscr F$, respectively, we write
$\mathscr E\to \mathscr F$ or $\mathscr F\leftarrow \mathscr E$ if there
is a commutative diagram
$$
\xymatrix{
0 \ar[r] & Y \ar[r] \ar@{=}[d] & E_1 \ar[r]\ar[d]^{u_1} &  E_2 \ar[r]\ar[d]^{u_2} & X\ar[r]\ar@{=}[d] & 0\\
0 \ar[r] & Y \ar[r] & F_1 \ar[r] &  F_2 \ar[r] & X\ar[r] & 0
}
$$
The maps $u_1$ and $u_2$ are not longer isomorphisms and we obtain just a partial order for four-term sequences. Nevertheless, this partial order generates an equivalence relation by declaring $\mathscr E$ and $\mathscr F$ equivalent (written $\mathscr E\sim\mathscr F$) if there is
 a finite chain $(\mathscr C_i)_{1\leq i\leq n}$ so
that
\begin{equation}\label{eq:A=B}
\mathscr E \to \mathscr C_1 \leftarrow \mathscr C_2\to \dots
\leftarrow \mathscr C_n\to \mathscr F.
\end{equation}
Although it will not be used in this paper, it can be remarked that, as can be seen in \cite[Corollary 6.40]{ecfa}, two links are enough. The space $\Ext^2(X, Y)$ is the set of equivalence classes of four-term exact sequences
$0\To Y\To E_1\To E_2\To X\To 0$. It carries a linear structure whose zero element is the class of the trivial sequence
$$\begin{CD}
0@>>> Y@>{\bf I}>> Y @>0>> X @>{\bf I}>> X @>>> 0
\end{CD}$$
Regarding the subcategories of Banach spaces and quasi-Banach $\ell_\infty$-modules, one can formalize the corresponding definitions of
$$\Ext_{\mathsf B}(X,Y),\qquad \Ext_{\mathsf B}^2(X,Y),\qquad \Ext_{\ell_\infty}(X,Y),\qquad \Ext_{\ell_\infty}^2(X,Y)$$
in the obvious way. Observe that if a given exact sequence of Banach spaces is zero in $\Ext^2_\mathsf B(X,Y)$, then so is in $\Ext^2(X,Y)$, and the same happens for quasi Banach modules. The converse seems to be unknown for Banach spaces and known to be false for modules, even for short exact sequences. The reason is that some of the links (spaces or arrows) in (\ref{eq:A=B}) might not be in the corresponding subcategory.

\subsection{Quasilinear maps}\label{sec:quasi}
The study of short exact sequences of (quasi) Banach spaces is greatly simplified
by the use of quasilinear maps. A map $\Phi: X\To Y$ acting between quasinormed spaces is quasilinear if it is homogeneous ($\Phi(\lambda x)=\lambda\Phi(x)$ for every $x\in X$ and $\lambda\in\mathbb K$) and satisfies an estimate
$$\|\Phi(x+y)- \Phi(x) - \Phi(y)\|\leq Q \big(\|x\|+\|y\|\big)$$
for some constant $Q\geq 0$ and all $x,y\in X$.
When necessary, we denote by $Q(\Phi)$ the least possible constant for which the preceding inequality holds.

 These maps make their way into the homology of (quasi) Banach spaces because of the following construction. Assume $X$ and $Y$ are quasi Banach spaces and that $\Phi: X_0\To Y$ is quasilinear, where $X_0$ is a dense subspace of $X$. Then
the functional
$$ \|(y,x)\|_\Phi=\|y-\Phi(x)\|+\|x\|$$
is a quasinorm on the product space $Y\times X_0$. If we denote by $Y\oplus_{\Phi}X_0$ the resulting quasinormed space, then we have an exact sequence
$$
\xymatrix{
0\ar[r] & Y \ar[r]^<<<<\imath & Y\oplus_{\Phi}X_0  \ar[r]^<<<<\pi & X_0\ar[r] & 0,
}
$$
where $\imath(y)=(y,0)$ and $\pi(y,x)=x$. It is clear that both $\imath$ preserves the quasinorms, while $\pi$ maps the unit ball of  $Y\oplus_{\Phi}X_0 $ onto that of $X_0$. Thus, if $Z(\Phi)$ denotes the completion of $Y\oplus_{\Phi}X_0$, then $\pi$ extends to a quotient map $Z(\Phi)\To X$ that we call again $\pi$ which yields the exact sequence of quasi Banach spaces
$$
\xymatrix{
0\ar[r] & Y \ar[r]^<<<<\imath & Z({\Phi})  \ar[r]^<<<<\pi & X\ar[r] & 0,
}
$$
called, for good reason, the sequence induced by $\Phi$. All short exact sequences of quasi Banach spaces arise in this way, up to equivalence, although we will not use this fact. We say that two quasilinear maps $\Phi,\Psi:X_0\To Y$ are strongly equivalent if they induce equivalent quasinorms on $Y\times X_0$, equivalently, if
there is a constant $K$ such that $\|\Psi(x)-\Phi(x)\|\leq K\|x\|$ for all $x\in X_0$.

This applies, in particular, when $X$ and $Y$ are Banach spaces. One however must be aware that there are short exact sequences
$
\xymatrix{
0\ar[r] & Y \ar[r] & Z \ar[r] & X\ar[r] & 0}
$
in which $X$ and $Y$ are Banach spaces but $Z$ is just a quasi Banach space. This cannot occur when the quotient space is $B$-convex, something that all the spaces $\ell_p$ are for $1<p<\infty$, and we have $\Ext(X,Y)=\Ext_{\mathsf B}(X,Y)$. These technical points will play a secondary role later.

\subsection{Centralizers}
These are a very special type of quasilinear maps that have to do with module structures. Centralizers have their own philosophy which does not fit exactly into the general framework of quasilinear maps as expounded in the preceding Section; see \cite[Section~8]{k-m}. As the only algebra that plays a role in our exposition is $\ell_\infty$, we can provide the reader with the minimal background one needs to understand the paper right now.

By a (quasinormed) sequence space we understand a linear space $X$ of functions
$x:\N\To\mathbb K$ equipped with a quasinorm $\|\cdot\|$ such that:
\begin{itemize}
\item The finitely supported functions are dense in $X$.
\item If $|y|\leq |x|$ and $x\in X$, then $y\in Y$ and $\|y\|\leq \|x\|$.
\end{itemize}
If $e_i$ is the function that takes the value $1$ at $i$ and vanishes elsewhere
we may also require that $\|e_i\|=1$ for every $i\in\N$. If $X$ is complete we call it a quasi Banach sequence space and we call it a Banach sequence space when the quasinorm is a norm. The easiest examples are the spaces $\ell_p$ for $0<p<\infty$. These are Banach spaces when $p\geq 1$. According to this,  $\ell_\infty$ itself is not a sequence space, but $c_0$ is. A sequence space is an $\ell_\infty$-module under the pointwise product: if $x\in X$ and $a\in\ell_\infty$, then $ax\in X$ and $\|ax\|\leq \|a\|_\infty\|x\|$.
From now on module structures refer to $\ell_\infty$ unless otherwise stated and we denote by $X^0$ the subspace of finitely supported sequences of $X$.

\begin{defin} A centralizer is a homogeneous mapping $\Phi: X\To Y$ acting between quasinormed modules that obeys an estimate of the form
$$
\|\Phi(ax)-a\Phi(x)\|\leq C\|a\|_\infty\|x\|,
$$
for some constant $C$ and all $a\in\ell_\infty, x\in X$.
\end{defin}

Our interest in centralizers stems from the fact that if $X$ and $Y$ are quasi Banach modules, $X_0$ is a dense submodule of $X$ and $\Phi:X_0\To Y$ is a quasilinear centralizer, then the product $a(y,x)=(ay,ax)$ makes $Y\oplus_\Phi X_0$ into a quasinormed module (and so $Z(\Phi)$ is a quasi Banach module) and the induced sequence lives in the category of quasi Banach modules. As a partial converse, if $X$ is a quasi Banach sequence space, then each short exact sequence of quasi Banach modules $0\To Y\To Z\To X\To 0$ arises, up to equivalence, from a quasilinear centralizer $\Phi:X^0\To Y$. We have the following remark  about ``automatic quasilinearity'' of centralizers.

\begin{lemma}\label{lem:autom}
Every centralizer defined on a quasinormed sequence space is quasilinear.
\end{lemma}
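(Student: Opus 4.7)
The plan is to exploit the lattice structure of a quasinormed sequence space to express any pair of vectors as $\ell_\infty$-multiples of a common majorant, so that the three differences measuring quasilinearity can all be controlled by a single application of the centralizer inequality at that majorant. This is the standard trick in Kalton's circle of ideas, but it is worth spelling it out here since it uses both axioms of a sequence space in an essential way.

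Concretely, given $x,y\in X$, I would set $z=|x|\vee|y|$ (pointwise maximum). Since $|x|,|y|\in X$ by the ideal axiom (applied with $|x|\leq|x|$, etc., and with $\||x|\|=\|x\|$), linearity gives $|x|+|y|\in X$, and from $z\leq|x|+|y|$ pointwise the ideal axiom delivers $z\in X$ with
$$
\|z\|\;\leq\;\bigl\||x|+|y|\bigr\|\;\leq\;C_X\bigl(\|x\|+\|y\|\bigr),
$$
where $C_X$ is the quasi-triangle constant of $X$. Next define $a,b\in\ell_\infty$ by $a(i)=x(i)/z(i)$ and $b(i)=y(i)/z(i)$ on $\supp z$ and $a(i)=b(i)=0$ elsewhere. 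Then $\|a\|_\infty,\|b\|_\infty\leq 1$, and by construction $x=az$, $y=bz$, $x+y=(a+b)z$ with $\|a+b\|_\infty\leq 2$.

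The homogeneity of $\Phi$ is already assumed, so only the quasi-additive estimate remains. Applying the centralizer inequality to the pairs $(a,z)$, $(b,z)$ and $(a+b,z)$ yields
$$
\|\Phi(x)-a\Phi(z)\|\leq C\|z\|,\qquad \|\Phi(y)-b\Phi(z)\|\leq C\|z\|,\qquad \|\Phi(x+y)-(a+b)\Phi(z)\|\leq 2C\|z\|.
$$
Iterating the quasi-triangle inequality in $Y$ and noting that the terms $a\Phi(z)+b\Phi(z)=(a+b)\Phi(z)$ cancel, one bounds $\|\Phi(x+y)-\Phi(x)-\Phi(y)\|$ by a universal constant times $\|z\|$, and therefore by a universal constant times $\|x\|+\|y\|$. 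That is precisely the quasilinear estimate required.

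I do not anticipate any substantial obstacle: the only delicate points are (i) checking that $z$ and the quotient sequences $a,b$ are genuinely well-defined elements of $X$ and $\ell_\infty$ respectively, which follows directly from the two defining axioms of a quasinormed sequence space, and (ii) a purely bookkeeping step that tracks how the constants $C_X$, the quasi-triangle constant of $Y$, and the centralizer constant $C$ combine to produce the final constant $Q(\Phi)$. Neither of these raises any real difficulty.
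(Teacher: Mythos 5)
Your argument is correct. The paper states Lemma~\ref{lem:autom} without proof, treating it as routine, and the common-majorant trick you use (writing $x=az$, $y=bz$ with $z=|x|\vee|y|$, $\|a\|_\infty,\|b\|_\infty\le 1$, then applying the centralizer estimate at $z$ to the multipliers $a$, $b$, $a+b$) is precisely the standard folklore argument the authors have in mind; the solidity axiom guarantees $z\in X$ with $\|z\|\le C_X(\|x\|+\|y\|)$, and the rest is bookkeeping with the quasi-triangle constants of $X$ and $Y$.
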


\subsection{Kalton-Peck maps and their chunked versions}\label{sec:chunk}
The most famous quasilinear maps were introduced by Kalton-Peck in \cite{kaltpeck} and they are actually centralizers. Let $\varphi:\R^+\To\mathbb K$ be a Lipschitz function such that $\varphi(0)=0$. Then, for $0<p<\infty$, the map $\Omega^\varphi_p:\ell_p^0\To \ell_p$ defined by
\begin{equation}\label{eq:KP}
\Omega^\varphi_p= x\,\varphi\left(\frac{p}{2}\log\frac{\|x\|}{|x|}\right)
\end{equation}
is a (quasilinear) centralizer whose constants $C\big(\Omega^\varphi_p\big)$ and  $Q\big(\Omega^\varphi_p\big)$ depend only on $p$ and the Lipschitz constant of $\varphi$.
We now introduce the second (type of) centralizer that we need to carry out the proof of the main result. As all centralizers do, it stems from complex interpolation theory (see \cite[Theorem 3.2]{new}), but we will consider a direct approach which does not require any previous knowledge on interpolation.\medskip

Let $(A_i)$ be a partition of $\N$ that we consider fixed in all what follows. Fix $0<p<\infty$.
Each $x\in\ell_p$ can be written as $x=\sum_ix_i$, where the $i$-th summand is
$x_i=x1_{A_i}$, with $\|x\|_p=\big( \sum_i \|x_i\|_p^p\big)^{1/p}$. Using this notation, one has:

\begin{lemma}\label{lem:chunk}
The map $\widetilde{\Omega}^\varphi_p: \ell_p^0\To\ell_p$ defined by
\begin{equation}\label{eq:chunk}
\widetilde{\Omega}^\varphi_p(x)=\sum_ix_i\varphi\left(\frac{p}{2}\log\frac{\|x\|}{\|x_i\|}\right)
\end{equation}
is a centralizer.
\end{lemma}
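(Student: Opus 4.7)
The plan is to verify the two defining properties of a centralizer: homogeneity and the estimate $\|\widetilde{\Omega}^\varphi_p(ax)-a\widetilde{\Omega}^\varphi_p(x)\|_p\le C\|a\|_\infty\|x\|_p$; quasilinearity will then be automatic by Lemma~\ref{lem:autom}. Homogeneity is essentially free, since replacing $x$ by $\lambda x$ multiplies each chunk $x_i$ by $\lambda$ and leaves the ratios $\|x\|/\|x_i\|$ unchanged. The convention that summands with $x_i=0$ are null is consistent because $\varphi(0)=0$ together with Lipschitzness of $\varphi$ forces $\|x_i\|\,|\log(\|x\|/\|x_i\|)|\to 0$ as $\|x_i\|\to 0^+$.

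For the centralizer estimate I would mimic the classical Kalton--Peck argument for $(\ref{eq:KP})$, with the chunks $A_i$ playing the role of individual coordinates. By homogeneity assume $\|a\|_\infty=1$ and write $t=\|ax\|/\|x\|$, $t_i=\|ax_i\|/\|x_i\|$, both in $(0,1]$. Because the $ax_i$ have pairwise disjoint supports,
$$\|\widetilde{\Omega}^\varphi_p(ax)-a\widetilde{\Omega}^\varphi_p(x)\|_p^p=\sum_i\|ax_i\|_p^p\,\Big|\varphi\Big(\tfrac{p}{2}\log\tfrac{\|ax\|}{\|ax_i\|}\Big)-\varphi\Big(\tfrac{p}{2}\log\tfrac{\|x\|}{\|x_i\|}\Big)\Big|^p.$$
Lipschitzness of $\varphi$ and the identity $\log\tfrac{\|ax\|}{\|ax_i\|}-\log\tfrac{\|x\|}{\|x_i\|}=\log(t/t_i)$ bound this, up to a constant depending only on $p$ and the Lipschitz constant of $\varphi$, by $\sum_i\|ax_i\|^p|\log(t/t_i)|^p$. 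Setting $\beta_i=\|x_i\|^p/\|x\|^p$ and $v_i=(t_i/t)^p$ one checks $\sum_iv_i\beta_i=1$ together with the crucial constraint $v_i\le 1/t^p$ (coming from $t_i\le 1$), and the sum rearranges as $(t^p\|x\|_p^p/p^p)\sum_iv_i\beta_i|\log v_i|^p$.

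The crux is to show $\sum_iv_i\beta_i|\log v_i|^p\le 2p^pe^{-p}/t^p$; once this is in hand, the offending $t^{-p}$ cancels against the $t^p$ prefactor and we obtain the desired bound $C\|x\|_p^p$ with $C$ depending only on $p$ and the Lipschitz constant of $\varphi$. To prove it I would split the sum at $v_i=1$. On $\{v_i\le 1\}$, a one-variable calculus exercise gives $v(-\log v)^p\le p^pe^{-p}$, attained at $v=e^{-p}$, providing a uniform bound. On $\{v_i>1\}$, the constraint $v_i\le 1/t^p$ yields $(\log v_i)^p\le p^p|\log t|^p$, so the partial sum is $\le p^p|\log t|^p\sum_iv_i\beta_i=p^p|\log t|^p$; the sharp inequality $t|\log t|\le 1/e$ on $(0,1]$ then converts this into $p^pe^{-p}/t^p$ and we are done.

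The principal obstacle is precisely this last estimate. Naive Jensen- or convexity-type arguments are too crude because $v|\log v|^p$ is unbounded as $v\to\infty$, and one genuinely needs to combine the ``small-$v$'' calculus maximum $v(-\log v)^p\le p^pe^{-p}$ with the ``large-$v$'' constraint $v_i\le 1/t^p$, glued together by the classical $t|\log t|\le 1/e$ identity that absorbs the $t^{-p}$ blow-up. Everything else is bookkeeping parallel to the Kalton--Peck original.
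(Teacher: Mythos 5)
Your argument is correct, but it re-proves the Kalton--Peck centralizer estimate from scratch, whereas the paper's proof is a clean \emph{reduction} to the already-established one. Writing $a_i=a1_{A_i}$, the paper notes that by disjointness of the chunks
$$\big\|\widetilde\Omega_p^\varphi(ax)-a\widetilde\Omega_p^\varphi(x)\big\|_p^p=\sum_i\|a_ix_i\|_p^p\,\Big|\varphi\Big(\tfrac p2\log\tfrac{\|ax\|_p}{\|a_ix_i\|_p}\Big)-\varphi\Big(\tfrac p2\log\tfrac{\|x\|_p}{\|x_i\|_p}\Big)\Big|^p,$$
and then recognizes the right-hand side as precisely the centralizer deviation $\|\Omega_p^\varphi(b\tilde x)-b\,\Omega_p^\varphi(\tilde x)\|_p^p$ of the scalar Kalton--Peck map evaluated on the auxiliary sequences $\tilde x=(\|x_i\|_p)_i\in\ell_p$ and $b=(\|a_ix_i\|_p/\|x_i\|_p)_i\in\ell_\infty$ (the paper phrases it with $(\|a_i\|_\infty)_i$, which is a harmless overestimate). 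This gives the bound $C(\Omega_p^\varphi)\|b\|_\infty\|\tilde x\|_p\le C(\Omega_p^\varphi)\|a\|_\infty\|x\|_p$ with essentially no computation, and in particular yields $C(\widetilde\Omega_p^\varphi)\le C(\Omega_p^\varphi)$ uniformly in the partition $(A_i)$ --- a fact the paper exploits later when passing to the finite models $\widetilde\Omega_{m,n}$. Your route --- normalizing $\|a\|_\infty=1$, introducing $v_i=(t_i/t)^p$ and $\beta_i=\|x_i\|^p/\|x\|^p$ with $\sum_iv_i\beta_i=1$ and $v_i\le t^{-p}$, then splitting at $v_i=1$ and combining the maxima $v(-\log v)^p\le p^pe^{-p}$ and $t|\log t|\le 1/e$ --- is a valid and self-contained re-derivation of the same estimate. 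It is longer and essentially duplicates the known Kalton--Peck calculus, but it has the merit of not appealing to $C(\Omega_p^\varphi)$ as a black box and would produce an explicit numerical constant if one wanted one.
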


\begin{proof}
Pick $x\in\ell_p^0$ and $a\in\ell_\infty$ and set $a_i=a1_{A_i}$. Then
$$
\widetilde{\Omega}^\varphi_p(ax)=\sum_ia_ix_i\varphi\left(\frac{p}{2}\log\frac{\|ax\|}{\|a_ix_i\|}\right),\qquad
a\widetilde{\Omega}^\varphi_p(x)=\sum_ia_ix_i \varphi\left(\frac{p}{2}\log\frac{\|x\|}{\|x_i\|}\right).
$$
Now,
\begin{align*}
\|\widetilde{\Omega}^\varphi_p(ax)- a\widetilde{\Omega}^\varphi_p(x)\|_p&
= \left( \sum_i \|a_ix_i\|^p\left|\varphi\Big(\frac{p}{2}\log\frac{\|ax\|}{\|a_ix_i\|}\Big)- \varphi \Big( \frac{p}{2} \log\frac{\|x\|}{\|x_i\|}\Big)\right|^p\right)^{1/p}\\
&\leq \left( \sum_i \|a_i\|_\infty^p\|x_i\|_p^p\left|\varphi\Big(\frac{p}{2}\log\frac{\|ax\|}{\|a_ix_i\|}\Big)- \varphi \Big( \frac{p}{2} \log\frac{\|x\|}{\|x_i\|}\Big)\right|^p\right)^{1/p}\\
&\leq C(\Omega^\varphi_p)\|a\|_\infty\|x\|_p,
\end{align*}
where the last inequality is just the centralizer estimate of the Kalton-Peck map $\Omega^\varphi_p$ applied to the sequences $(\|a_i\|_\infty)_i$ and $(\|x_i\|_p)_i$.
\end{proof}

The choice of the function $\varphi(t)=t$ in (\ref{eq:KP}) and (\ref{eq:chunk}) is especially rewarding and we simply write $\Omega_p$ and $\widetilde\Omega_p$ for these maps. When $p=2$ we will even omit the subscript.
We also write $Z_p(\varphi)=Z(\Omega^\varphi_p)$ and $\widetilde{Z}_p(\varphi)=Z(\widetilde\Omega^\varphi_p)$. If $\varphi$ is the identity on $\R_+$ we just write $Z_p$ and $\widetilde{Z}_p$.


\subsection{Mirrored centralizers}\label{sec:mirror}
One of the core ideas in the widely ignored memoir \cite{kaltmem} is that centralizers never walk alone. To give shape to this affirmation, let us recall a result from the even more ignored \cite[Lemma 5(a) and Corollary 3]{cabe}:

\begin{lemma}\label{lem:compa} Let $0<q<p<\infty$. If $\Phi_p$ is a centralizer on $\ell_p$, then the map defined by
$$
\Phi_q(f)=u|f|^{q/r}\Phi\big{(}|f|^{q/p}\big{)}
$$
for $q^{-1}=r^{-1}+p^{-1}$ is a centralizer on $\ell_q$. All centralizers on $\ell_q$ arise in this way, up to strong equivalence.
\end{lemma}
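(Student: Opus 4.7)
The key structural observation is that the hypothesis $1/q=1/r+1/p$ provides two useful identities at once: it makes $r/q$ and $p/q$ conjugate H\"older exponents (since $q/r+q/p=1$), and it turns $f\mapsto|f|^{q/p}$ into a nonlinear isometry of $\ell_q$ into $\ell_p$, because $\||f|^{q/p}\|_p=\|f\|_q^{q/p}$. Writing every $f$ in polar form $f=u|f|$ with $u$ unimodular on $\supp f$, homogeneity of $\Phi_q$ is immediate: the scalar $|\lambda|^{q/p}$ comes out of $\Phi$ by its own homogeneity, the exponents satisfy $|\lambda|^{q/r}\cdot|\lambda|^{q/p}=|\lambda|$, and the sign factors combine via $u_{\lambda f}=(\lambda/|\lambda|)u_f$ to restore $\lambda$.

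For the centralizer estimate, fix $a\in\ell_\infty$ and $f\in\ell_q^0$. Factoring $|af|^{q/p}=|a|^{q/p}|f|^{q/p}$, where $|a|^{q/p}\in\ell_\infty$ has sup-norm $\|a\|_\infty^{q/p}$, the centralizer property of $\Phi$ supplies
$$E:=\Phi\bigl(|a|^{q/p}|f|^{q/p}\bigr)-|a|^{q/p}\Phi\bigl(|f|^{q/p}\bigr),\qquad \|E\|_p\le C(\Phi)\,\|a\|_\infty^{q/p}\|f\|_q^{q/p}.$$
Substituting into the definition of $\Phi_q$, the pointwise identities $|af|^{q/r}|a|^{q/p}=|a||f|^{q/r}$ and $u_{af}|a|=au_f$ (valid in both real and complex cases) make the main terms telescope, leaving just
$$\Phi_q(af)-a\Phi_q(f)=u_{af}|af|^{q/r}E.$$
H\"older's inequality with the conjugate exponents $r/q$ and $p/q$ then gives $\|\Phi_q(af)-a\Phi_q(f)\|_q\le\|af\|_q^{q/r}\|E\|_p\le C(\Phi)\,\|a\|_\infty\|f\|_q$, which is the centralizer estimate for $\Phi_q$.

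For the converse, given a centralizer $\Psi$ on $\ell_q$, first replace $\Psi$ up to strong equivalence by a centralizer whose value is supported inside the support of its argument: applying the centralizer inequality to a coordinate projection $1_E\in\ell_\infty$ yields $\|\Psi(f)1_{E^c}\|_q\le C\|f\|_q$ whenever $\supp f\subseteq E$, which is exactly the required bound. One then inverts the formula by setting, for $g=v|g|\in\ell_p^0$,
$$\Phi(g)=v|g|^{1-p/q}\Psi\bigl(|g|^{p/q}\bigr),$$
understood as zero outside $\supp g$, and checks by the symmetric argument that $\Phi$ is a centralizer on $\ell_p$. A direct substitution then gives $\Phi_q(f)=u\Psi(|f|)$, which is strongly equivalent to $\Psi(f)=\Psi(u|f|)$ by another use of the centralizer inequality, so $\Psi$ is indeed recovered modulo strong equivalence. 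The main obstacle, I expect, lies in this converse step: the exponent $1-p/q$ is negative, so $|g|^{1-p/q}$ blows up wherever $g$ is small, and the inverse formula is only well-defined after the support reduction of $\Psi$; moreover, the centralizer inequality for $\Phi$ is no longer a one-line H\"older estimate but requires more delicate pointwise bookkeeping on $\supp(ag)$.
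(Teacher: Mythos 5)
The paper does not actually prove this lemma; it cites it directly from Cabello S\'anchez's \emph{Nonlinear centralizers in homology} (\cite[Lemma 5(a) and Corollary 3]{cabe}), so there is no ``paper's own proof'' to compare against. I will assess your argument on its own terms.

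Your forward direction is correct and cleanly done: the factorization $|af|^{q/p}=|a|^{q/p}|f|^{q/p}$, the pointwise identities $|af|^{q/r}|a|^{q/p}=|a||f|^{q/r}$ and $u_{af}|a|=au_f$, and H\"older with conjugate exponents $r/q,\,p/q$ are exactly the right ingredients, and the positivity of the exponent $q/r$ is what makes H\"older applicable. The support-reduction step at the start of the converse is also right: $\|1_{E^c}\Psi(f)\|_q\le C\|f\|_q$ follows by testing the centralizer estimate against $1_E$, and $\Psi'(f)=1_{\supp f}\Psi(f)$ is a centralizer with the same constant.

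The converse, however, has a genuine gap that goes beyond ``delicate pointwise bookkeeping.'' Your inverted formula $\Phi(g)=v|g|^{1-p/q}\Psi(|g|^{p/q})$ is the correct algebraic inverse, and you correctly observe that $1-p/q<0$. But then the centralizer estimate for $\Phi$ does not follow by the symmetric H\"older argument. Writing out the error term as you did for the forward case, one gets $\Phi(ag)-a\Phi(g)=v_{ag}|ag|^{-p/r}E'$ with $E'=\Psi(|ag|^{p/q})-|a|^{p/q}\Psi(|g|^{p/q})$ and $\|E'\|_q\le C(\Psi)\|a\|_\infty^{p/q}\|g\|_p^{p/q}$. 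To finish one would need $\||ag|^{-p/r}E'\|_p\le C\|a\|_\infty\|g\|_p$, but since $-p/r<0$ there is no admissible H\"older pairing: one would need a conjugate exponent satisfying $1/p=1/s+1/q$, and since $q<p$ this gives $1/s<0$. The $\ell_q$-norm bound on $E'$ simply does not control the weighted $\ell_p$-norm with a negative-power weight, because $E'$ may be comparatively large precisely where $|ag|$ is small. For the Kalton--Peck centralizer one can still push the computation through by hand, using the boundedness of $t\mapsto t^{q/p}\log(1/t)$ on $(0,1]$, but that exploits the specific shape of the map. For a general centralizer the needed pointwise control over $E'$ is itself a nontrivial structural theorem (due to Kalton) asserting that, after passing to a strongly equivalent representative, a centralizer on a sequence space satisfies a pointwise, lattice-type version of the centralizer inequality rather than just the norm estimate. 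Without that input your converse is not yet a proof.
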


We will thus say that $\Phi_q$ is the reflection of $\Phi_p$ in $\ell_q$. 
As the reader may guess, there is a reason behind the slightly eccentric presentation of the centralizers of the preceding Section: parameters have been assigned in such a way that if $q<p$, then $\Omega_q^\varphi$ is the reflection of $\Omega_p^\varphi$ in $\ell_q$, and the same occurs to their chunked versions. This will be used in Sections~\ref{sec:comm} and \ref{sec:all p}.

\section{Splitting criteria}\label{sec:crit}
Suppose we are given two quasi Banach spaces $X$ and $Y$ and that we want to construct  a four-term exact sequence
$
\xymatrix{0\ar[r] & Y\ar[r] & E_1 \ar[r]   &E_2 \ar[r] &X\ar[r]& 0  }
$.
The simplest (and unique) way to achieve this is to pick another space $E$, then construct two short exact sequences
\begin{align*}
\xymatrix{ &&&& 0\ar[r] & Y\ar[r]^\imath & E_1 \ar[r]^\pi   &E \ar[r] & 0 &&&& (\mathscr E_1)}\\
\xymatrix{&&&&0\ar[r] & E\ar[r]^\jmath & E_2 \ar[r]^\varpi   &X \ar[r] & 0&&&& (\mathscr E_2)}
\end{align*}
and \emph{splice them through $E$} to get
$$
\xymatrixrowsep{0.5pc}
\xymatrix{&&&0\ar[r] & Y\ar[r] & E_1 \ar[rr]^{\jmath\,\pi}
  \ar[dr]  &&E_2 \ar[r] &X\ar[r]& 0&&& \!\!\!\!\!\!\!\!\!(\mathscr E) \\
& && &&&A \ar[ur]   }
$$
Our first task is to decide when the resulting sequence is trivial in $\Ext^2(X,Y)$.
We have the following criterion which is both visual from the diagrammatic side and
transparent from the quasilinear point of view.

\begin{lemma}\label{lem:thesquare}
Let $(\mathscr E_1)$ and $(\mathscr E_2)$
be two short exact sequences of quasi Banach spaces. The following statements are equivalent:
\smallskip

\noindent{\rm (a)}\quad The spliced sequence $(\mathscr E)$ is trivial in $\Ext^2(X,Y)$.
\smallskip

\noindent{\rm (b)}\quad The sequence  $(\mathscr E)$ splits, that is, there exists a quasi Banach space $\square$ and a commutative diagram
\begin{equation}\label{split}
\begin{CD}
&&0&&0 \\
&&@VVV @VVV\\
&&Y @= Y \\
&&@VVV @VVV \\
0@>>> E_1@>>> \square  @>>> X@>>>0\\
&&@VVV @VVV @|\\
0@>>> E @>>> E_2 @>>> X @>>>0 \\
&&@VVV @VVV \\
&&0&&0
\end{CD}
\end{equation}
with exact rows and columns.
\smallskip

\noindent Moreover, if the sequence  $(\mathscr E_1)$ is induced by the quasilinear map $\Phi: E\To Y$, then these are equivalent to:
\smallskip

\noindent{\rm (c)}\quad The map $\Phi: E\To Y$ has a quasilinear extension to $E_2$.

\smallskip

\noindent If the sequence $(\mathscr E_2)$ is induced by the quasilinear map $\Psi: X\To E$, then {\rm (a)} and {\rm (b)} are equivalent to:
\smallskip

\noindent {\rm (d)}\quad The map $\Psi: X\To E$ has a quasilinear lifting to $E_1$.
\end{lemma}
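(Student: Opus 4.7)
The plan is to establish (a)$\Leftrightarrow$(b) by diagrammatic (homological) means and then to read (c) and (d) off (\ref{split}) via the quasilinear-map dictionary of Section~\ref{sec:quasi}, one per splice datum. The underlying heuristic is that in any diagram of the shape (\ref{split}) each row and each column is either $\mathscr E_1$, $\mathscr E_2$, or a pullback/pushout of one of these, so the entire diagram is governed by one extra datum.

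For (b)$\Rightarrow$(a) I would take the middle column $\alpha:0\to Y\to\square\to E_2\to 0$ as the witness of triviality. Commutativity of (\ref{split}) produces a canonical linear map from $E_1$ into the pullback of $\alpha$ along $\jmath:E\hookrightarrow E_2$; this map is a linear homeomorphism by a routine chase, with injectivity coming from the injectivity of $E_1\to\square$ (the first arrow of the middle row) and surjectivity obtained by using the identification $Y\cong\ker(\square\to E_2)$ to correct an arbitrary lift to $E_1$ of a given element of $E$. Thus $\mathscr E_1$ is equivalent to $\jmath^{*}\alpha$ in $\Ext(E,Y)$, and the long-exact-$\Ext$ sequence induced by $\mathscr E_2$ (see \cite{hiltstam,ecfa}) yields $\mathscr E_2\cdot\mathscr E_1=0$ in $\Ext^2(X,Y)$, which is (a). The converse (a)$\Rightarrow$(b) runs the same argument backwards: triviality of $\mathscr E$ forces $\mathscr E_1$ to lie in the image of $\jmath^{*}:\Ext(E_2,Y)\to\Ext(E,Y)$, and the middle space of any preimage $\alpha$ is the required $\square$, with the middle row of (\ref{split}) obtained as the composition $\square\to \square/Y=E_2\to X$.

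For (b)$\Leftrightarrow$(c), represent $\mathscr E_1$ by a quasilinear map $\Phi$ on a dense subspace of $E$, so that $E_1=Z(\Phi)$. Given a quasilinear extension $\tilde\Phi$ of $\Phi$ to a dense subspace of $E_2$ --- meaning $\tilde\Phi\circ\jmath$ differs from $\Phi$ by a bounded map --- one sets $\square:=Z(\tilde\Phi)$. This supplies the middle column, the connecting map $E_1\to\square$ being the bounded linear map $(y,e)\mapsto(y+\Phi(e)-\tilde\Phi(\jmath e),\,\jmath e)$ and the map $\square\to X$ being induced by $\varpi$. Commutativity and exactness follow directly. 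Conversely, the middle column of any diagram witnessing (b) is induced by some quasilinear $\tilde\Phi$, and commutativity of the upper-left square in (\ref{split}) forces $\tilde\Phi\circ\jmath$ to be strongly equivalent to $\Phi$, so $\tilde\Phi$ is the sought extension. The proof of (b)$\Leftrightarrow$(d) is formally dual, with the middle row replacing the middle column and a quasilinear lifting $\tilde\Psi:X_0\to E_1$ of $\Psi$ (i.e., with $\pi\circ\tilde\Psi\sim\Psi$) replacing $\tilde\Phi$.

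The main technical subtlety is the ``up to strong equivalence'' character of the quasilinear representation: $\tilde\Phi\circ\jmath$ does not equal $\Phi$ on the nose but only up to a bounded linear map, and in order to make (\ref{split}) commute strictly one has to absorb this discrepancy into a linear correction in the connecting map, which is precisely what the term $\Phi(e)-\tilde\Phi(\jmath e)$ does in the formula above. Once this bookkeeping is handled, (c) and (d) become transparent quasilinear rewritings of (b), so the genuine content of the lemma lives in the diagrammatic equivalence (a)$\Leftrightarrow$(b).
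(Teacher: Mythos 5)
Your proof is correct in substance but takes a genuinely different route from the paper for the equivalence (a)$\Leftrightarrow$(b). You derive it from the long exact $\Ext$-sequence attached to $\mathscr E_2$: the pullback square inside (\ref{split}) realizes $\mathscr E_1$ as $\jmath^*\alpha$ for the middle column $\alpha$, and exactness at $\Ext(E,Y)$ then identifies the kernel of the Yoneda map $\Ext(E,Y)\to\Ext^2(X,Y)$ with $\operatorname{im}\jmath^*$. The paper instead gives a self-contained argument: the zero sequence obviously satisfies (b), and the paper shows that (b) propagates along a single arrow $\mathscr E\to\mathscr F$ (and its reverse, by categorical duality) by forming the cube diagram through the push-out of the inner square; property (b) then propagates along the whole chain (\ref{eq:A=B}) defining equivalence in $\Ext^2$. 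Your approach is slicker but rests on a nontrivial theorem (the long exact sequence and its compatibility with Yoneda products in the quasi-abelian category of quasi Banach spaces, as in \cite{ecfa}); the paper's proof only uses the definition of the equivalence relation and the push-out/pull-back constructions, at the cost of a larger diagram. For (c) and (d) the two arguments are morally the same dictionary, though directed differently: you build $\square$ from a twisted-sum representation, while the paper extracts the quasilinear map from the diagram by comparing a linear and a homogeneous bounded section of $Q:\square\to E_2$.

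One small slip: your connecting operator $(y,e)\mapsto(y+\Phi(e)-\tilde\Phi(\jmath e),\jmath e)$ has the sign of the correction backwards (to kill the discrepancy inside $\|\cdot\|_{\tilde\Phi}$ you need $y+\tilde\Phi(\jmath e)-\Phi(e)$), and, more to the point, if $\tilde\Phi\circ\jmath$ and $\Phi$ are only strongly equivalent the correction is merely homogeneous and bounded, not linear, so this formula would not be an operator. But strong equivalence already means the quasinorms agree up to constants, so the uncorrected map $(y,e)\mapsto(y,\jmath e)$ is bounded and linear on its own; no correction term is needed. Since the lemma speaks of a genuine quasilinear extension ($\tilde\Phi\circ\jmath=\Phi$), the term vanishes anyway, so this does not affect the validity of the argument.
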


\begin{proof}
We begin with the implication  (b) $\implies$ (a).
Observe that if there is a diagram
as in (b) then there is a commutative diagram
\begin{equation*}
\xymatrixrowsep{0.5pc}\xymatrix{0\ar[r] & Y\ar[r]  \ar@{=}[dd] & Y\oplus E_1 \ar[rr] \ar[dd] \ar[dr]  &&\square \ar[dd]  \ar[r] &X\ar[r]  \ar@{=}[dd]& 0 \\
& && E_{1} \ar[ur] \ar[dd]  \\
0\ar[r] & Y\ar[r] & E_1 \ar[rr]  \ar[dr]  &&E_2 \ar[r] &X\ar[r]& 0 \\
& && E \ar[ur]  }
\end{equation*}
It follows that the lower sequence is trivial since it is equivalent to the upper one and the following diagram is commutative:
\begin{equation*}
\xymatrix{0\ar[r] & Y\ar[r]  \ar@{=}[d] & Y\oplus E_1 \ar[r] \ar[d]   &\square \ar[d]  \ar[r] &X\ar[r]  \ar@{=}[d]& 0 \\
0\ar[r] & Y \ar@{=}[r] & Y \ar[r]^0   &X \ar@{=}[r] &X\ar[r]  & 0  }
\end{equation*}

We now stablish the implication (a) $\Longrightarrow$ (b).
As the zero sequence has the property required in (b) with $\square=Y\oplus X$,
it is all a matter of showing that that property is preserved in the successive steps that define the equivalence of $\Ext^2(X,Y)$.

So, assume we have a commutative diagram
\begin{equation*}
\xymatrixrowsep{0.5pc}\xymatrix{0\ar[r] & Y\ar[r]   \ar@{=}[dd] & E_1 \ar[rr] \ar[dd]  \ar[dr]  &&E_2 \ar[r] \ar[dd] &X\ar[r] \ar@{=}[dd] & 0 \\
& && E \ar[ur] \ar[dd]  \\
0\ar[r] & Y\ar[r] & F_1 \ar[rr]  \ar[dr]  &&F_2 \ar[r] &X\ar[r]& 0 \\
& && F \ar[ur]  }
\end{equation*}
and that the sequence of $E$'s fits in a diagram as (\ref{split}). Then one can form the commutative diagram
\begin{equation}\label{cubecubecube}
\xymatrixrowsep{0.5pc}
\xymatrix{
&Y\ar@{=}[rr]\ar@{=}[dr]\ar[ddd]&& Y\ar@{=}[dr]  \ar[ddd]&&\\
&&Y\ar@{=}[rr]\ar[ddd]&&Y \ar[ddd] && \;\\
\; &&\;\\
&E_1\ar[ddd] \ar[dr]\ar[rr]&& \square \ar[ddd]\ar[rr]\ar[dr]&& X\ar@{=}[dr] \ar@{=}[ddd]\\
&&F_1\ar[rr]\ar[ddd]&&\PO \ar[rr]\ar[ddd] && X\ar@{=}[ddd]\\
\; &&\;\\
& E \ar[dr]\ar[rr]&& E_2\ar[rr]\ar[dr]&& X\ar@{=}[dr]\\
& & F \ar[rr]&& F_2 \ar[rr] &&X}\end{equation}
where the push-out refers to the inner horizontal square
\begin{equation*}
\begin{CD}
E_1@>>> \square \\
@VVV @VVV \\
F_1@>>>\PO\\
\end{CD}
\end{equation*}
and the arrows beginning at $\PO$ are defined by the universal property of the push-out contruction.

By categorical duality, that is, by ``reversing the arrows'' and taking pull-back instead of push-out when necessary, one sees that as long as one has a commutative diagram of exact sequences
\begin{equation*}
\xymatrixrowsep{0.5pc}\xymatrix{0\ar[r] & Y\ar[r]   & E_1 \ar[rr]   \ar[dr]  &&E_2 \ar[r]  &X\ar[r] & 0 \\
& && E \ar[ur]   \\
0\ar[r] & Y\ar[r]  \ar@{=}[uu] & F_1 \ar[rr]  \ar[uu] \ar[dr]  &&F_2 \ar[r]  \ar[uu]  &X\ar[r]  \ar@{=}[uu]& 0 \\
& && F \ar[ur]  \ar[uu] }
\end{equation*}
and the $E$'s satisfy condition (b), then so the $F$'s do.

\medskip

(b)$\implies$(c) This is immediate ... if one is acquainted with the connection between quasilinear maps and short exact sequences. Assume $E_1=Y\oplus_\Phi E$, where $\Phi: E\To Y$ is quasilinear and let us draw the hypothesized commutative diagram
\begin{equation}\label{dia:b->c}
  \xymatrix{Y \ar@{=}[r]\ar[d]_\imath & Y \ar[d]^I \\
    Y \oplus_\Phi E \ar[r]^J\ar[d]_\pi & \square \ar[r]\ar[d]^Q& X \ar@{=}[d] \\
    E \ar[r]^\jmath & E_2 \ar[r] & X}
    \end{equation}
    in which the 0's have been omitted. Consider the map $e\in E\longmapsto(0,e)\in Y \oplus_\Phi E$. This is a linear (probably unbounded) section of the quotient map $\pi$. Since all exact sequences of linear spaces are trivial in the purely algebraical sense this map can be ``extended'' to a linear section of $Q$. Precisely, there is a linear map $L: E_2\To \square $ such that\begin{itemize}
    \item $L(\jmath(e))=J(0,e)$ for every $e\in E$.
    \item $P\circ L$ is the identity on $E_2$.
    \end{itemize}

    In a similar vein, consider the mapping $e\in E\longmapsto(\Phi(e),e)\in Y \oplus_\Phi E$, which is a (homogeneous) bounded section of $\pi$ and ``extend'' it to a (homogeneous) bounded section of $Q$, that is, a homogeneous bounded $B: E_2\To \square $ such that
   \begin{itemize}
    \item $B(\jmath(e))=J(\Phi(e),e)$ for every $e\in E$.
    \item $B\circ L$ is the identity on $E_2$.
    \end{itemize}
It is clear that the difference $B-L$ maps $E_2$ to $\ker Q=I[Y]$ and so $\Gamma=I^{-1}\circ (B-L)$ is a quasilinear extension of $\Phi$.
\medskip

The implication (c)$\implies$(b) is obvious: if $\Gamma: E_2\To Y$ is a quasilinear map extending $\Phi$, set $\square= Y\oplus_\Gamma E_2$, put the obvious maps and check.

\medskip

(b)$\implies$(d) Assume $E_2=E\oplus_\Psi X$, where $\Psi: X\To E$ is quasilinear.
The relevant diagram is now
\begin{equation}\label{dia:b->d}
  \xymatrix{Y \ar@{=}[r]\ar[d]_\imath & Y \ar[d]^I \\
    E_1 \ar[r]^J\ar[d]_\pi & \square \ar[r]\ar[d]^Q& X \ar@{=}[d] \\
    E \ar[r]^\jmath & E\oplus_\Psi X \ar[r] & X}
    \end{equation}
Let $B:X\To\square$ be a homogeneous bounded map such that $Q(B(x))=(\Psi(x), x)$ for all $x\in X$ and $L:X\To\square$ be a linear map such that $Q(L(x))=(0, x)$ for all $x\in X$. Then $\Lambda=J^{-1}\circ(B-L)$ is the required lifting of $\Psi$, that is, $\pi\circ\Lambda=\Psi$.
\end{proof}

One disavantage of working with quasilinear maps is that, as a rule, they cannot be explicitly defined on the whole space which is aimed to be the quotient of the resulting short exact sequence, but only on a dense subspace,
and so we need to adapt our criteria to this setting.

Let $X, Y, E$ be quasi Banach spaces and let $\Psi:X_0\To E$ and $\Psi: E_0\To Y$ be quasilinear maps, where $X_0$ and $E_0$ are dense subspaces of $X$ and $E$, respectively.
Let
$$
\xymatrix{0\ar[r] & Y\ar[r]^\imath & E_1 \ar[r]^\pi   &E \ar[r] & 0},\qquad\text{and}\qquad
\xymatrix{0\ar[r] & E\ar[r]^\jmath & E_2 \ar[r]^\varpi   &X \ar[r] & 0}
$$
be the induced sequences, so that $E_1=Z(\Phi)$ and $E_2=Z(\Psi)$ are the completions of $Y\oplus_\Phi E_0$ and $E\oplus_\Psi X_0$, respectively, as explained in Section \ref{sec:quasi}.

In this setting, we say that the concatenation $\Phi\,\Psi$ is trivial in $\Ext^2(X,Y)$, and  we write $\Phi\,\Psi \sim 0$ for short, if the associated four-term sequence
$$
\xymatrixrowsep{0.5pc}
\xymatrix{0\ar[r] & Y\ar[r]^\imath & E_1 \ar[rr]  \ar[dr]_\pi  &&E_2 \ar[r]^\varpi &X\ar[r]& 0 \\
& && E \ar[ur]_\jmath   }
$$
is trivial in $\Ext^2(X,Y)$. We have the following operative version of Lemma~\ref{lem:thesquare}. The proof is a simple adaptation that we leave to the patient reader.

\begin{lemma}\label{lem:crit-operative}
Let $X, E, Y$ be quasi Banach spaces and $\Psi:X_0\To E$ and  $\Phi:E_0\To Y$ quasilinear maps such that $\Psi[X_0]\subset E_0$, where $X_0$ and $E_0$ are dense subspaces of $X$ and $A$, respectively. The following are equivalent:
\smallskip

\noindent{\rm (a)}\quad $\Phi\,\Psi\sim 0$ in $\Ext^2(X,Y)$.
\smallskip

\noindent{\rm (b)}\quad $\Phi$ has a quasilinear extension to $E_0\oplus_\Psi X_0$.

\smallskip
\noindent{\rm (c)}\quad $\Psi$ has a quasilinear lifting to $Y\oplus_\Psi E_0$.\hfill$\square$
\end{lemma}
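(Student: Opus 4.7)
My plan is to carry out the entire argument of Lemma~\ref{lem:thesquare} at the level of the dense subspaces on which the quasilinear maps are actually defined, rather than on their completions. The hypothesis $\Psi[X_0]\subseteq E_0$ is precisely what is needed for the algebraic splice $E_0\oplus_\Psi X_0$ to sit as a dense linear subspace of $E_2=Z(\Psi)$; similarly, $Y\oplus_\Phi E_0$ is by construction a dense linear subspace of $E_1=Z(\Phi)$. Every diagrammatic construction used in the proof of Lemma~\ref{lem:thesquare}---push-outs, pull-backs, and the choices of linear and homogeneous bounded sections---can be performed directly on these explicit algebraic dense subspaces, and completed only at the very end.

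For (a)$\Rightarrow$(b), I would apply Lemma~\ref{lem:thesquare} to obtain the splitting diagram~\eqref{split}, and then repeat the section-difference argument used to pass from (b) to (c) there, taking care to choose the linear section $L$ and the homogeneous bounded section $B$ of the quotient map $\square\to E_2$ so that both send the dense subspace $E_0\oplus_\Psi X_0\subset E_2$ into a corresponding algebraic dense subspace of $\square$, and agree on $\jmath(E_0)$ with the canonical linear and bounded sections used in the proof of (b)$\Rightarrow$(c). The difference $B-L$ then maps $E_0\oplus_\Psi X_0$ into $Y$ and, after the natural identification, yields a quasilinear extension $\Gamma$ of $\Phi$. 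The converse (b)$\Rightarrow$(a) is transparent: given such a $\Gamma$, the completion of $Y\oplus_\Gamma(E_0\oplus_\Psi X_0)$ serves as the space $\square$ in diagram~\eqref{split}, and Lemma~\ref{lem:thesquare}(b)$\Rightarrow$(a) delivers the rest. The equivalence (a)$\iff$(c) is handled by the dual argument based on diagram~\eqref{dia:b->d}, with liftings of $\Psi$ replacing extensions of $\Phi$; the inclusion $\Psi[X_0]\subseteq E_0$ is exactly what ensures that the constructed lifting takes values in $Y\oplus_\Phi E_0$ rather than in some larger object.

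The only step requiring real attention is verifying that the sections $L$ and $B$ appearing in both directions can indeed be chosen to respect the prescribed algebraic dense subspaces. This is routine bookkeeping rather than a genuine obstacle: $\square$ itself can be realised as the completion of an algebraic space built explicitly from the quasilinear data, inside which every short exact sequence of vector spaces algebraically splits, so the desired linear and homogeneous bounded sections are easy to exhibit coherently with the given algebraic data. Beyond this organisational care, I expect no new difficulty over what already appears in the proof of Lemma~\ref{lem:thesquare}.
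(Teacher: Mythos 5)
Your proposal is correct and matches what the paper intends: the paper itself omits the proof, stating only that it is ``a simple adaptation'' of Lemma~\ref{lem:thesquare} that is ``left to the patient reader,'' and your plan of repeating the push-out/section arguments at the level of the algebraic dense subspaces $Y\oplus_\Phi E_0$ and $E_0\oplus_\Psi X_0$, using the hypothesis $\Psi[X_0]\subseteq E_0$ exactly to make the latter sit densely inside $Z(\Psi)$, is precisely that adaptation. The only point you rightly flag (choosing the sections $L,B$ coherently with the prescribed sections on $\jmath(E_0)$) is resolved the same way as in Lemma~\ref{lem:thesquare}: the given sections of $\pi$ over $E_0$ and an arbitrary pair of linear/bounded-homogeneous sections of $Q$ restricted to the dense subspace differ by a bounded homogeneous map into $\ker Q=I[Y]$, so their difference furnishes the desired quasilinear extension or lifting.
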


Our next task is to render these criteria manageable; with the same notation of the preceding Lemma, one has

\begin{lemma}\label{lem:H} The following are equivalent: \medskip

\noindent{\rm (a)}\quad $\Phi\,\Psi\sim 0$ in $\Ext^2(X,Y)$.
\medskip

\noindent{\rm (b)}\quad There is a homogeneous map $H:X_0\To Y$ and a constant $K$ such that
 \[
\|H(x + x') - H(x) - H(x') - \Phi(\Psi(x + x') - \Psi(x) - \Psi(x'))\| \leq K\big{(}\|x\|+ \|x'\|\big{)}
\]
for every $x, x' \in X_0$.
\end{lemma}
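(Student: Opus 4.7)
The plan is to reduce everything to criterion (c) of Lemma~\ref{lem:crit-operative}, which says that (a) is equivalent to the existence of a quasilinear lifting $\Lambda: X_0 \to Y\oplus_\Phi E_0$ of $\Psi$, meaning $\pi\circ\Lambda = \Psi$ where $\pi(y,e)=e$. Since the underlying set of $Y\oplus_\Phi E_0$ is $Y\times E_0$, any homogeneous lifting is forced to have the form $\Lambda(x) = (H(x),\Psi(x))$ for some homogeneous map $H: X_0 \to Y$. So the whole lemma will fall out from unpacking the quasilinearity of $\Lambda$ in terms of $H$.

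First I would carry out the direction (a)$\Rightarrow$(b). Given a quasilinear lifting $\Lambda=(H,\Psi)$, compute
\[
\Lambda(x+x') - \Lambda(x) - \Lambda(x') = \bigl(H(x+x') - H(x) - H(x'),\; \Psi(x+x') - \Psi(x) - \Psi(x')\bigr).
\]
Applying the definition $\|(y,e)\|_\Phi = \|y - \Phi(e)\| + \|e\|$ of the quasinorm on $Y\oplus_\Phi E_0$, one gets
\[
\bigl\|\Lambda(x+x')-\Lambda(x)-\Lambda(x')\bigr\|_\Phi = \bigl\|H(x+x')-H(x)-H(x') - \Phi\bigl(\Psi(x+x')-\Psi(x)-\Psi(x')\bigr)\bigr\| + \bigl\|\Psi(x+x')-\Psi(x)-\Psi(x')\bigr\|.
\]
The quasilinearity of $\Lambda$ bounds the left-hand side by $Q(\Lambda)(\|x\|+\|x'\|)$, so dropping the nonnegative second term on the right already yields the estimate in (b) with $K=Q(\Lambda)$.

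For the converse (b)$\Rightarrow$(a), given such an $H$ I simply define $\Lambda(x) = (H(x),\Psi(x))$. It is homogeneous because both $H$ and $\Psi$ are. Using the same identity as above, the quasinorm of its quasilinearity defect equals the sum of the two terms displayed; the first is controlled by the hypothesis (b), and the second is controlled by $Q(\Psi)(\|x\|+\|x'\|)$ because $\Psi$ is quasilinear. Hence $\Lambda$ is a quasilinear lifting of $\Psi$, and Lemma~\ref{lem:crit-operative}(c) gives (a) with constant $K + Q(\Psi)$.

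There is really no serious obstacle: the content of the lemma is just the observation that, once one forces a homogeneous lifting to have second coordinate $\Psi(x)$, the quasinorm $\|\cdot\|_\Phi$ splits the quasilinearity defect into exactly the two pieces that (b) and the quasilinearity of $\Psi$ account for. The only point to be careful about is that the map $H$ need not be bounded or quasilinear on its own — only the composite defect appearing in (b) is required to be controlled — which is precisely why the statement is formulated with $\Phi$ absorbing the ``bad'' part of $H$.
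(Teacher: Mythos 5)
Your proof is correct, and it takes a genuinely different (though ``categorically dual'') route from the one in the paper. You invoke the \emph{lifting} criterion Lemma~\ref{lem:crit-operative}(c) and observe that any homogeneous lifting $\Lambda:X_0\To Y\oplus_\Phi E_0$ of $\Psi$ must have the form $\Lambda(x)=(H(x),\Psi(x))$; the quasinorm $\|(y,e)\|_\Phi=\|y-\Phi(e)\|+\|e\|$ then splits the quasilinearity defect of $\Lambda$ into exactly the expression of (b) plus the defect of $\Psi$, so (a)$\Rightarrow$(b) drops out by discarding a nonnegative term and (b)$\Rightarrow$(a) follows by adding $Q(\Psi)$. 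The paper instead uses the \emph{extension} criterion (b) of the same lemma: for (a)$\Rightarrow$(b) it takes an extension $\Gamma$ of $\Phi$ to $E_0\oplus_\Psi X_0$ and sets $H(x)=\Gamma(\Psi(x),x)$, while for (b)$\Rightarrow$(a) it defines $\Gamma(e,x)=H(x)+\Phi(e-\Psi(x))$. Both are short, but your version has the advantage that the forward implication is literally one line (no need to unwind $\Gamma$ against two quasilinearity defects), since the target quasinorm already exhibits the quantity to be bounded. The paper's version has the mild advantage that the witness formula $\Gamma(e,x)=H(x)+\Phi(e-\Psi(x))$ is written down explicitly, which is handy if one later wants to manipulate the extension itself rather than just know it exists. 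One small caveat on notation: Lemma~\ref{lem:crit-operative}(c) in the source has an evident typo ($Y\oplus_\Psi E_0$ should read $Y\oplus_\Phi E_0$, as you correctly read it).
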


	\begin{proof}(a)$\implies$(b) Let $\Gamma: E_0\oplus_\Psi X_0\To Y$ be any quasilinear extension of $\Phi$, so that $\Gamma(e,0)=\Phi(e)$ for every $e\in E_0$. Define $H(x)=\Gamma(\Psi(x),x)$ and check.\smallskip
	
	(b)$\implies$(a) Define $\Gamma: E_0\oplus_\Psi X_0\To Y$ as $\Gamma(e,x)=H(x)+\Phi(e-\Psi(x))$ and check that it is quasilinear. Obviously $\Gamma(e,0)=\Phi(e)$ for every $e\in E_0$.
\end{proof}

The function $H$ will sometimes be called \emph{a witness} that $\Psi\,\Phi \sim 0$.

\subsection*{Locally convex twisted sums and $K$-spaces}
The criteria provided by Lemma~\ref{lem:thesquare} and its relatives characterizes the triviality of spliced sequences in the category of quasi Banach spaces. The equivalence between (a) and (b) is true replacing quasi Banach spaces by Banach spaces and $\Ext^2(X,Y)$ by $\Ext^2_\mathsf{B}(X,Y)$, and the same is true for quasi Banach modules. Parts (c) and (d) are more delicate since a quasilinear map acting between Banach spaces can well lead to a quasi Banach space which is not even isomorphic to a Banach space; see \cite{ribe, kalt}. One can isolate those quasilinear maps that, acting between Banach spaces, produce Banach spaces. This approach was pursued in \cite{cabecastdu}. Those subtleties are however unnecessary in our current circumstances. Let us explain why. A minimal extension of a quasi Banach space $X$ is a short exact sequence of the form $0\To\K\To Z\To X\To 0$. If all minimal extensions of $X$ are trivial, that is, if $\Ext(X,\K)=0$, then $X$ is called a $K$-space. The spaces $\ell_p$ are $K$-spaces for all values of $0<p\leq\infty$ with the only exception of $p=1$. Moreover, if $X$ is a Banach $K$-space and one has an exact sequence $0\To Y\To Z\To X\To 0$ in which $Y$ is a Banach space, then $Z$ is isomorphic to a Banach space and so $\Ext(X,Y)=\Ext_\mathsf B(X,Y)$. The point of this discussion is to remark that when the spaces $X$ and $E$ are Banach $K$-spaces and $Y$ is any Banach space, the space $\square$ appearing in (b) is also necessarily a Banach space: just look at the left descending sequence and \emph{then} at the middle horizontal one. In this way, all the results of this Section remain true in the category of Banach spaces under the additional assumption that $X$ and $E$ are $K$-spaces.


\section{The counterexample}\label{sec:counter}

Let $X$ and $Y$ be $\ell_\infty$-modules and let $A$ be any subset of $\ell_\infty$.
We say that a function $f:X\To Y$ commutes with $A$ if $f(ax)=af(x)$ for every $x\in X$ and every $a\in A$. We are mostly interested in the
following choices of $A$:
\begin{itemize}
\item The unitary group $U=\{u:\N\To\K\, \text{ such that }\, |u|=1\}$.
\item The real unitary group $U_\R=\{u:\N\To\R \,\text{ such that }\, |u|=1\}$.
\end{itemize}
Of course $U=U_\R$ when the ground field is $\R$.
Note that if $f:X\To Y$ is a mapping acting between sequence spaces that commutes with $U_\R$, then $f$ preserves supports in the sense that $\supp (f(x))\subset \supp (x)$.
Every centralizer defined on a sequence space is strongly equivalent to one that commutes with $U$.

\begin{lemma} Let $X$ and $Z$ be quasi Banach sequence spaces and $Y$ a Banach sequence space. If $\Psi: X^0\To Z$ and $\Phi: Z^0\To Y$ are centralizers which commute with $U_\R$ and $\Phi \,\Psi \sim 0$ in $\Ext^2(X,Y)$ then one can choose a witness function $H$ that commutes with $U_\R$.
\end{lemma}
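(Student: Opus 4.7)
My plan is to symmetrize the given witness $H$ by averaging against the action of the compact abelian group $U_\R = \{-1,+1\}^{\N}$. Since $H$ lives on $X^0$ and $X$ is a sequence space, each $x \in X^0$ has finite support $S = \supp(x)$, and both $ux$ and (by support-preservation of centralizers $\Psi,\Phi$) the term $\Phi(\Psi(ux+ux')-\Psi(ux)-\Psi(ux'))$ depend on $u \in U_\R$ only through $u|_S$. This means any integral over Haar measure $\mu$ on $U_\R$ reduces to a finite average over $\{-1,+1\}^S$, avoiding any measurability issues.

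The concrete definition I would use is
\[
H'(x) = \int_{U_\R} u^{-1} H(ux)\, d\mu(u),
\]
interpreted coordinatewise; since $\mu$-averaging the coordinate $u_j$ over any coordinate $j\notin S$ gives zero, this automatically kills contributions outside $S$, so $H'(x) = \frac{1}{2^{|S|}} \sum_{\epsilon\in\{-1,+1\}^S} \mathbf{1}_S \,\tilde\epsilon\, H(\tilde\epsilon x)$, a finite sum in $Y$ (here $\tilde\epsilon$ extends $\epsilon$ by $1$ outside $S$). Homogeneity of $H'$ is inherited from that of $H$. Commutation with $U_\R$ is a change-of-variables: for $v\in U_\R$, substitute $\epsilon'=\epsilon\cdot v|_S$ in the sum defining $H'(vx)$ and use $\tilde\epsilon\, v\, x = \tilde{\epsilon'} x$ to get $H'(vx)=vH'(x)$.

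The core step is checking that $H'$ remains a witness. Apply the witness inequality for $H$ to $(ux, ux')$; since $\Psi$ and $\Phi$ commute with $U_\R$, the inner expression $\Psi(ux+ux')-\Psi(ux)-\Psi(ux')$ equals $u(\Psi(x+x')-\Psi(x)-\Psi(x'))$ and the $\Phi$ of it equals $u\Phi(\cdots)$. Multiplying the resulting identity by $u^{-1}$ removes the $u$ in front of the $\Phi$-term (turning it into a $u$-independent constant, which is invariant under averaging) and yields
\[
u^{-1}H(u(x+x')) - u^{-1}H(ux) - u^{-1}H(ux') - \Phi\bigl(\Psi(x+x')-\Psi(x)-\Psi(x')\bigr) = u^{-1} r(ux,ux'),
\]
with the right-hand side bounded in norm by $K(\|x\|+\|x'\|)$ for every $u$ (using that $X$ and $Y$ are sequence spaces so multiplication by $u^{\pm1}$ preserves norms). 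Integrating both sides against $\mu$ and applying the triangle inequality for the Bochner/finite integral in the Banach space $Y$ gives the required witness estimate for $H'$ with the same constant $K$.

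The step I expect to be most delicate is matching the ``support bookkeeping'' when averaging: one must verify that the Haar average over all of $U_\R$ of $u^{-1}H(ux)$ genuinely equals the finite-support expression above, i.e., that averaging a coordinate outside $\supp(x)$ contributes zero. This is the identity $E[u_j \cdot f(u|_S)] = 0$ for $j\notin S$, which is where the hypothesis $U_\R$ (rather than just the trivial group) and the sequence-space structure are both essential; everything else is a routine algebraic verification using homogeneity and the defining centralizer identity.
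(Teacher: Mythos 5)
Your proof is correct and takes essentially the same route as the paper: symmetrize $H$ by the formula $\widetilde{H}(x)=\int_{U_\R}u^{-1}H(ux)\,du$ against Haar measure on the compact abelian group $U_\R\cong\{\pm1\}^\N$, pull $u^{-1}$ through the $\Phi\circ\Psi$ term using $U_\R$-equivariance, and integrate the witness inequality using the triangle inequality for the Bochner integral in the Banach space $Y$. Your finite-sum rewriting with the explicit $\mathbf{1}_S$ projection is in fact slightly sharper than the paper's informal remark (which glosses over the vanishing of coordinates off $\supp(x)$), and this projection is what guarantees that the resulting witness is support-preserving, as needed in the subsequent corollary.
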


\begin{proof}
If $H:X^0\To Y$ is as in Lemma~\ref{lem:H}(b), then for every unitary $u$, and in particular for $u\in U_\R$ one has
$$
\|u^{-1}\big{(}H(u(x + x')) - H(ux) - H(ux')\big{)} - \Phi(\Psi(x + x') - \Psi(x) - \Psi(x'))\| \leq K\big{(}\|x\|+ \|x'\|\big{)}.
$$
If we identify the group $U_\R$ with the ``Cantor group'' $\{1,-1\}^\N$, it is clear that the product topology of the later corresponds to the relative weak* topology of $U_\R$ when $\ell_\infty$ is regarded as the dual of $\ell_1$. In particular $U_\R$ is a compact group for the weak* topology.  Let $du$ denote the Haar measure on $U_\R$. Observe that for finitely supported $x$, the ``orbit'' $U_\R \, x=\{ux: u\in U_\R\}$ is finite and so, the mapping
$
u\in U_\R\longmapsto u^{-1}H(ux)\in Y$
 is weak* to norm continuous.
Define a new mapping $\widetilde{H}: X^0\To Y$ by the  Bochner integral
$$
\widetilde{H}(x)=\int_{U_\R} u^{-1}H(ux) \, du
$$
and check. Note that $
\widetilde{H}(x)$ agrees with the average of $u^{-1}H(ux)$ over those real unitaries $u$ such that $u=1$ off $\supp(x)$.
\end{proof}

We now specialize to $X=Z=Y=\ell_2$ and make a careful using of the symmetries of the Kalton-Peck centralizer $\Omega$.

\begin{cor}\label{withKP} Let $\Phi: \ell^0_2\To\ell_2$ be centralizer commuting with $U_\R$. If $\,\Phi\,\Omega \sim 0$, then there exist
a homogeneous, support preserving $H:  \ell^0_2\To\ell_2$ and  a constant $K$
such that, whenever $x,y\in\ell_2^0$ are disjoint and of equal norms,
$$\|H(x+y)-H(x)-H(y) - 	\Phi(x+y) \|_2 \leq K  \|x\|_2.$$
\end{cor}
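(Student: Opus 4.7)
The plan is to invoke the immediately preceding lemma and then compute the ``defect'' $\Omega(x+y)-\Omega(x)-\Omega(y)$ explicitly when $x,y$ are disjointly supported with $\|x\|_2=\|y\|_2$. Since the Kalton--Peck map $\Omega$ commutes with the full unitary group $U$, it commutes in particular with $U_\R$, so the hypotheses of the preceding lemma are met: $\Phi\,\Omega\sim 0$ together with commutation with $U_\R$ yields a homogeneous witness $H:\ell_2^0\To\ell_2$, commuting with $U_\R$, and a constant $K$ with
$$
\|H(x+x') - H(x) - H(x') - \Phi\bigl(\Omega(x+x') - \Omega(x) - \Omega(x')\bigr)\|_2 \leq K\bigl(\|x\|_2 + \|x'\|_2\bigr)
$$
for every $x,x' \in \ell_2^0$.

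First I would observe that commuting with $U_\R$ already forces support-preservation: given $x$ with $\supp(x)=S$, the real unitary $u=\mathbf{1}_S-\mathbf{1}_{S^c}$ satisfies $ux=x$, so $H(x)=H(ux)=uH(x)$, which forces $H(x)$ to vanish off $S$. Thus the $H$ produced above is automatically support-preserving.

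Next I would compute the Kalton--Peck defect on disjoint blocks. Since $\Omega(z)=z\log(\|z\|_2/|z|)$, and since $\Omega$ is support-preserving, for disjoint $x,y$ we have $\Omega(y)=0$ on $\supp(x)$ and $|x+y|=|x|$ there, so
$$
\Omega(x+y) - \Omega(x) = x\log\frac{\|x+y\|_2}{|x|} - x\log\frac{\|x\|_2}{|x|} = x\log\frac{\|x+y\|_2}{\|x\|_2},
$$
with the analogous formula on $\supp(y)$. When $\|x\|_2=\|y\|_2$ we have $\|x+y\|_2=\sqrt 2\,\|x\|_2$, so both coefficients collapse to $\tfrac12\log 2$ and
$$
\Omega(x+y) - \Omega(x) - \Omega(y) = \tfrac{1}{2}(\log 2)(x+y).
$$
Applying $\Phi$ and using its homogeneity, the witness inequality specialises to
$$
\|H(x+y) - H(x) - H(y) - \tfrac{1}{2}(\log 2)\Phi(x+y)\|_2 \leq 2K\|x\|_2.
$$

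To finish I would simply rescale, replacing $H$ by $\widetilde H:=\tfrac{2}{\log 2}H$: this preserves homogeneity and support-preservation, and absorbs the constant in front of $\Phi(x+y)$, yielding the claimed inequality with new constant $K'=\tfrac{4K}{\log 2}$. There is no real obstacle here; the corollary is essentially the preceding lemma plus the elementary fact that, on disjoint equinorm blocks, $\Omega$ behaves as a scalar multiple of the identity.
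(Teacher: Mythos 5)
Your argument is correct and follows the paper's own proof essentially verbatim: invoke the preceding lemma to obtain a $U_\R$-commuting (hence support-preserving) witness $H$, compute that $\Omega(x+y)-\Omega(x)-\Omega(y)=\tfrac{\log 2}{2}(x+y)$ for disjoint equinorm $x,y$, and absorb the scalar by homogeneity. The only difference is that you spell out the rescaling of $H$ and the support-preservation argument explicitly, whereas the paper leaves them implicit.
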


\begin{proof}
 If $x, y$ are disjointly supported and of equal norm in $\ell_2$, then
$$\Omega(x+y)-\Omega(x)-\Omega(y)=\frac{\log 2}{2} (x+y).$$
The result follows using the homogeneity of $\Phi$ and $H$.
\end{proof}

In order to exploit the estimate provided by the preceding result we need to select pairs of disjoint sums of vectors from the unit basis of $\ell_2^0$ in a judicious way. This is achieved through certain partitions, defined below.
The idea is quite simple:
\begin{itemize}
\item Fix $k\in \mathbb N$ and start with the set $2^k$ (that is, a set of cardinality $2^k$).
\item Split it into two halves with the same number of elements. Then split the resulting sets into two halves and continue until reaching the singletons.
\item Don't forget to keep track of the whole process labelling all the sets.
\end{itemize}
We can formalize this procedure by using a dyadic tree of finite height.
Let $\mathscr T_k$ be the dyadic tree of height $k$ whose elements are words of length at most $k$  written with 0s and 1s, including the ``empty word'', which has length 0.
Given $\alpha=\alpha_1\alpha_2\cdots\alpha_n$ in $\mathscr T_k$, with $n<k$, we put  $\alpha0=\alpha_1\alpha_2\cdots\alpha_n0$ and $\alpha1=\alpha_1\alpha_2\cdots\alpha_n1$.

\begin{defin}\label{def:adeq}
An adequate partition of $2^k$ is a set-valued function $I:\mathscr T_k\To\mathscr P(2^k)$ such that:
\begin{itemize}
\item $I(\varnothing)=2^k$, $I(\alpha)$ is nonempty for every $\alpha\in\mathscr T_k$.
\item If $\alpha$ has length less than $k$, then $I(\alpha)$ is the disjoint union of $I(\alpha0)$ and $I(\alpha1)$.
\end{itemize}
\end{defin}

\noindent The sets $I(\alpha)$ for $\alpha$ of fixed length $0\leq n\leq k$ form a partition on $2^k$ into $2^{n}$ many subsets of cardinality $2^{k-n}$.
It takes only a moment's reflection to realize that an adequate partition is essentially a linear order on $2^k$. Indeed for every adequate partition there exist a unique order
$\preccurlyeq$ such that $x\preccurlyeq y$ whenever  $x\in I(\alpha0)$ and $y\in I(\alpha1)$. This correspondence will be used later.
\medskip

Each adequate partition on $2^k$ gives rise to a family of vectors of $\ell_2(2^k)$, parametrized by $\alpha\in\mathscr T_k$ just taking
$$
x_\alpha= 1_{I(\alpha)}=\sum_{i\in I(\alpha)} e_i.
$$

\begin{lemma} Let $\Phi: \ell^0_2\To\ell_2$ be centralizer commuting with $U_\R$. If $\Phi\,\Omega \sim 0$ then there is a constant $K = K(\Phi)$ such that if $S$ is a subset of $\N$ with $|S|=2^k$ and $I,J:\mathscr T_k\To \mathscr P(S)$ are adequate partitions, then
\begin{equation}\label{eq:nutcracker}
\Big\|\sum_{\alpha\in\mathscr T_k}\big(\Phi(x_\alpha)-\Phi(y_\alpha) \big)	\Big\|_2\leq  2Kk\sqrt{2^{k-1}},
\end{equation}
where $x_\alpha$ and $y_\alpha$ are the vectors associated to $I$ and $J$, respectively, that is, $x_\alpha=\sum_{i\in I(\alpha)} e_i$ and $y_\alpha=\sum_{i\in J(\alpha)} e_i$ for $\alpha\in\mathscr T_k$.
\end{lemma}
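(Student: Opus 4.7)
The plan is to exploit Corollary~\ref{withKP} by applying the disjoint-sum estimate recursively along the tree $\mathscr T_k$ and then telescoping. First I would pick a witness function $H:\ell_2^0\to\ell_2$ as in Corollary~\ref{withKP}: homogeneous, support preserving and satisfying $\|H(x+y)-H(x)-H(y)-\Phi(x+y)\|_2\le K\|x\|_2$ whenever $x,y$ are disjointly supported with $\|x\|_2=\|y\|_2$. For each non-leaf $\alpha\in\mathscr T_k$ (that is $|\alpha|<k$), the vectors $x_{\alpha 0}$ and $x_{\alpha 1}$ are disjointly supported with common norm $\sqrt{2^{k-|\alpha|-1}}$, and $x_{\alpha 0}+x_{\alpha 1}=x_\alpha$, so defining
\[
\epsilon^I_\alpha\;=\;H(x_\alpha)-H(x_{\alpha 0})-H(x_{\alpha 1})-\Phi(x_\alpha),
\]
I obtain $\|\epsilon^I_\alpha\|_2\le K\sqrt{2^{k-|\alpha|-1}}$; and because $H$ and $\Phi$ are support preserving, $\epsilon^I_\alpha$ is supported inside $I(\alpha)$.

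Next I would telescope. Summing the defining identity over all non-leaf $\alpha$ kills every $H$-term except those at depth $0$ and depth $k$:
\[
\sum_{|\alpha|<k}\Phi(x_\alpha)\;=\;H(1_S)-\sum_{|\alpha|=k}H(x_\alpha)-\sum_{|\alpha|<k}\epsilon^I_\alpha.
\]
Doing the same thing for $J$ and the vectors $y_\alpha$ yields the analogous identity with an error term $\sum_{|\alpha|<k}\epsilon^J_\alpha$. The crucial observation is that the first two terms are \emph{the same for both partitions}: the root vector is $x_\varnothing=y_\varnothing=1_S$, and, by support preservation of $H$, the leaf sums $\sum_{|\alpha|=k}H(x_\alpha)$ and $\sum_{|\alpha|=k}H(y_\alpha)$ both equal $\sum_{i\in S}H(e_i)$. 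The leaves of $\Phi$ cancel in the same way. Subtracting the two telescopes,
\[
\sum_{\alpha\in\mathscr T_k}\bigl(\Phi(x_\alpha)-\Phi(y_\alpha)\bigr)\;=\;\sum_{|\alpha|<k}\bigl(\epsilon^J_\alpha-\epsilon^I_\alpha\bigr).
\]

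Finally I would estimate the right-hand side exploiting the disjointness of supports \emph{within each level} $n$: since $\{I(\alpha):|\alpha|=n\}$ partitions $S$, the vectors $\epsilon^I_\alpha$ with $|\alpha|=n$ are pairwise disjointly supported, so
\[
\Big\|\sum_{|\alpha|=n}\epsilon^I_\alpha\Big\|_2^2\;=\;\sum_{|\alpha|=n}\|\epsilon^I_\alpha\|_2^2\;\le\;2^n\cdot K^2\cdot 2^{k-n-1}\;=\;K^2\,2^{k-1}.
\]
Summing over $n=0,1,\dots,k-1$ by the triangle inequality yields $\|\sum_{|\alpha|<k}\epsilon^I_\alpha\|_2\le kK\sqrt{2^{k-1}}$, and the same bound holds for $J$, so the full sum is at most $2kK\sqrt{2^{k-1}}$, which is the desired estimate. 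The step that requires care is the bookkeeping of supports: without the support-preservation provided by the real-unitary symmetry (so that level-$n$ errors are orthogonally supported, and the root/leaf contributions of $H$ coincide for both partitions), the naive triangle inequality would produce a bound of order $2^k$ rather than the required $k\,2^{k/2}$, and the argument would collapse.
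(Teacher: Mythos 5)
Your proof is correct and uses essentially the same ideas as the paper's: a witness $H$ from Corollary~\ref{withKP}, orthogonality of the level-$n$ error vectors coming from support preservation, the per-level Pythagorean bound $\sqrt{2^n}\cdot K\sqrt{2^{k-n-1}}=K\sqrt{2^{k-1}}$, and cancellation of the root and leaf $H$- and $\Phi$-terms between the two partitions. The only cosmetic difference is that you telescope in one shot, whereas the paper packages the same computation as an induction on the tree depth $j$.
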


\begin{proof}Let $H$ and $K$ be as in Corollary~\ref{withKP}.
Since $x_0$ and $x_1$ have the same norm and $x_0+x_1=x_\varnothing=1_S$ we have
$$\|H(x_\varnothing)-H(x_0)-H(x_1) -\Phi(x_\varnothing)\|_2 \leq K \sqrt{2^{k-1}}.$$
The same estimate holds using
$y_0$ and $y_1$. Since $y_\varnothing=x_\varnothing$ we get
\begin{equation}\label{eq:weget}\|H(x_0)+H(x_1)-H(y_0)-H(y_1)\|_2 \leq 2K\sqrt{2^{k-1}}.
\end{equation}
We claim that, for every $0<j\leq k$,
\begin{equation}\label{eq:weclaim}
\Big\|\sum_{|\alpha| = j} \big(H(x_\alpha)-H(y_\alpha) \big)
+ \sum_{|\alpha| < j}\big( \Phi(x_\alpha)-\Phi(y_\alpha)\big )\Big\|_2 \leq 2Kj\sqrt{2^{k-1}},
\end{equation}
where $|\alpha|$ denotes the length of $\alpha$. Indeed, when $j=1$ this reduces to (\ref{eq:weget}).
Now assume that (\ref{eq:weclaim}) holds for $j=i-1$ and let us check it for $j=i$.

By the induction hypothesis this comes from
\begin{align*}
\Big\|\sum_{|\alpha| = i} (Hx_\alpha&-Hy_\alpha)-
\sum_{|\gamma| = i-1}(Hx_\gamma-Hy_\gamma)
-\sum_{|\delta| < i-1}(\Phi x_\delta-\Phi y_\delta)
+ \sum_{|\alpha| < j}( \Phi x_\beta-\Phi y_\beta) \Big\|_2\\
&\leq  \Big\|\sum_{|\gamma| = i-1} H(x_{\gamma 0})+H(x_{\gamma 1})-
H(x_{\gamma 0}+x_{\gamma 1})+\Phi(x_{\gamma 0}+x_{\gamma 1})\Big\|_2\\
&+\Big\|\sum_{|\gamma| = i-1} H(y_{\gamma 0})+H(y_{\gamma 1})-
H(y_{\gamma 0}+y_{\gamma 1})+\Phi(y_{\gamma 0}+y_{\gamma 1})\Big\|_2
\end{align*}
which using that $H$ and $\Phi$ preserve supports and that
$$\| H(z_{\alpha 0})+H(z_{\alpha 1})-
H(z_{\alpha 0}+z_{\alpha 1})+\Phi(z_{\alpha 0}+z_{\alpha 1})\| \leq K\sqrt{2^{k-j}},$$
for $z=x,y$, gives an upper estimate of
$$2\sqrt{2^{j-1}} K \sqrt{2^{k-j}}=2K \sqrt{2^{k-1}}$$ and proves the claim.
When $j=k$ then all $x_\alpha$ and $y_\alpha$ are single vectors of the basis, the first summand in (\ref{eq:weclaim}) is null, and
$$\Big\|
\sum_{\alpha \in\mathscr T_k} \big(\Phi(x_\alpha)-\Phi(y_\alpha)\big)\Big\|_2
=
\Big\|
\sum_{|\alpha|< k} \big(\Phi(x_\alpha)-\Phi(y_\alpha)\big)\Big\|_2  \leq 2Kk\sqrt{2^{k-1}}.\vspace{-20pt}$$
\end{proof}
Let us say that centralizer $\Phi: X\To Y$ acting between symmetric sequence spaces is symmetric if, for every permutation $\sigma:\N\To \N$, one has $\Phi(x\circ\sigma)=(\Phi (x))\circ\sigma$. This is not the traditional definition, but every symmetric centralizer in the traditional sense is strongly equivalent to one of this form.

Note that if $\Phi$ is a symmetric centralizer, then the left-hand side of (\ref{eq:nutcracker}) is $0$; indeed in that case there there are scalars $\lambda_j$ for $0\leq j\leq k$ such that $\Phi(x_\alpha)=\lambda_{|\alpha|} x_\alpha$ and $\Phi(y_\alpha)=\lambda_{|\alpha|} y_\alpha$  for all $\alpha\in\mathscr T_k$. So, in order to obtain our counter-example we need a highly nonsymmetric centralizer.

Let us consider the following finite-dimensional versions of the centralizer $\widetilde{\Omega}$ defined in Section~\ref{sec:chunk}. Fix $m,n\in\N$ and partition the product $m\times n$ into $m$ subsets of cardinality $n$ as follows
$$
A_i=\{(i,j): 1\leq j\leq n\}\qquad(1\leq i\leq m).
$$
Then define $\widetilde{\Omega}_{m,n}: \ell_2(m\times n)\To\ell_2(m\times n)$ by
$$
\widetilde{\Omega}_{m,n}(x)=\sum_{1\leq i\leq m} x_i\log\frac{\|x\|}{\|x_i\|},
$$
where $x_i=x1_{A_i}$. By Lemma~\ref{lem:chunk} we have $C(\widetilde{\Omega}_{m,n})\leq C(\Omega)$ for all $m,n$. Note that if $x$ has exactly $q$ nonzero chunks and they all have the same norm, then
 $$
\widetilde{\Omega}_{m,n}(x)=\log(q) x.
 $$
Fix now some  $k$ and identify $2^k$ with the product $2^{r} \times 2^{s}$ with $k=r+s$. Let $\preccurlyeq$ be  the associated lexicographic order, i.e., for $(a,b), (c,d) \in 2^r \times 2^s$,
$$(a,b) \preccurlyeq (c,d) \Longleftrightarrow a < c {\rm\ or\ } (a=c {\rm \ and\ } b\leq d).$$
Let $I:\mathscr T_k\To\mathscr P(2^k)$ be the adequate partition associated to $\preccurlyeq$. We also consider the  lexicographic order ``symmetric'' to $\preccurlyeq$, namely
$$(a,b) \preccurlyeq' (c,d) \Longleftrightarrow b < d {\rm\ or\ } (b=d {\rm \ and\ } a\leq c).$$
Let $J$ be the partition induced by $\preccurlyeq'$. Now we follow the notations introduced just after Definition~\ref{def:adeq}, in particular
$$
x_\alpha=\sum_{i\in I(\alpha)}e_i,\qquad y_\alpha=\sum_{j\in J(\alpha)}e_j\qquad(\alpha\in\mathscr T_k).
$$

\begin{lemma}\label{lem:rs}
Let $k=r+s$ given, $I$ and $J$ as before, with associated vectors $(x_\alpha)$ and $(y_\alpha)$, respectively. Let ${\widetilde{\Omega}}$ be the centralizer ${\widetilde{\Omega}}_{2^r,2^s} :\ell_2(2^r\times 2^s)\To \ell_2(2^r\times 2^s)$.
Then, for every $\alpha\in\mathscr T_k$ one has
\begin{eqnarray*}
|\alpha| \geq r &\Longrightarrow&  {\widetilde{\Omega}}(x_\alpha)=0, \\
|\alpha|<r &\Longrightarrow&{\widetilde{\Omega}}(x_\alpha)=(r-|\alpha|)\log(2)\, x_\alpha,\\
|\alpha| \geq s &\Longrightarrow& {\widetilde{\Omega}}(y_\alpha)=(k-|\alpha|)\log(2) y_\alpha,\\
|\alpha|<s &\Longrightarrow& {\widetilde{\Omega}}(y_\alpha)=r \log(2)\, y_\alpha.\end{eqnarray*}
In particular
$$
\Big\|
\sum_{\alpha \in \mathscr T_k} \big(\widetilde{\Omega}(x_\alpha)-\widetilde{\Omega}(y_\alpha)\big)\Big\|_2 = \log(2) rs \sqrt{2^k}.$$
\end{lemma}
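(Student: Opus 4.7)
The plan is to verify each of the four case statements case-by-case by directly reading off the structure of the sets $I(\alpha)$ and $J(\alpha)$, then to sum over $\alpha\in\mathscr{T}_k$ using a telescoping of the levels.

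First I would describe the geometry of the two partitions. Because $I$ comes from the lexicographic order in which the first coordinate dominates, the first $r$ splits exhaust the first coordinate and the remaining $s$ splits act within a single row of the form $A_i$. Hence for $|\alpha|\geq r$ the set $I(\alpha)$ is contained in a single chunk $A_{i(\alpha)}$, so $x_\alpha$ has only one nonzero chunk, which equals $x_\alpha$ itself; the logarithm $\log(\|x_\alpha\|/\|(x_\alpha)_i\|)=\log 1=0$ vanishes, giving $\widetilde{\Omega}(x_\alpha)=0$. For $|\alpha|<r$ the set $I(\alpha)$ is the disjoint union of exactly $q=2^{r-|\alpha|}$ full chunks $A_i$, all of the same norm $\sqrt{2^s}$, and the ``Note'' preceding the lemma (the identity $\widetilde{\Omega}_{m,n}(x)=\log(q)\,x$ for vectors with $q$ equinormed chunks) immediately yields $\widetilde{\Omega}(x_\alpha)=(r-|\alpha|)\log 2\,x_\alpha$.

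The analysis of $y_\alpha$ is dual. Since $\preccurlyeq'$ dominates on the second coordinate, the first $s$ splits fix a block $B_\alpha\subseteq 2^s$ of second coordinates of size $2^{s-|\alpha|}$, so for $|\alpha|<s$ the set $J(\alpha)=2^r\times B_\alpha$ meets every chunk $A_i$ in exactly $2^{s-|\alpha|}$ points. This gives $q=2^r$ equinormed chunks and hence $\widetilde{\Omega}(y_\alpha)=r\log 2\,y_\alpha$. When $|\alpha|\geq s$ the second coordinate is already determined to be a single value $b(\alpha)$, and the remaining $|\alpha|-s$ splits cut a block $A'\subseteq 2^r$ of first coordinates with $|A'|=2^{k-|\alpha|}$; each chunk $A_i$ with $i\in A'$ contributes a single nonzero entry to $y_\alpha$, so one has $q=2^{k-|\alpha|}$ equinormed chunks of norm $1$ and the Note gives $\widetilde{\Omega}(y_\alpha)=(k-|\alpha|)\log 2\,y_\alpha$.

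For the final assertion I would use that, at each fixed length $j$, both families $(x_\alpha)_{|\alpha|=j}$ and $(y_\alpha)_{|\alpha|=j}$ are indicator functions of a partition of $2^r\times 2^s$, so $\sum_{|\alpha|=j}x_\alpha=\sum_{|\alpha|=j}y_\alpha=1_{2^k}$. Summing the explicit formulas above across levels gives
\[
\sum_{\alpha\in\mathscr{T}_k}\widetilde{\Omega}(x_\alpha)=\log 2\cdot\sum_{j=0}^{r-1}(r-j)\,1_{2^k}=\log 2\cdot\tfrac{r(r+1)}{2}\,1_{2^k},
\]
while splitting the $y$-sum at $j=s$ yields
\[
\sum_{\alpha\in\mathscr{T}_k}\widetilde{\Omega}(y_\alpha)=\log 2\Bigl(\sum_{j=0}^{s-1}r+\sum_{j=s}^{k}(k-j)\Bigr)1_{2^k}=\log 2\Bigl(rs+\tfrac{r(r+1)}{2}\Bigr)1_{2^k}.
\]
Subtracting gives $-rs\log 2\,\cdot 1_{2^k}$, whose $\ell_2$-norm equals $rs\log 2\sqrt{2^k}$, as required.

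The only delicate step is making sure that at each level the sets $I(\alpha)$ and $J(\alpha)$ really have the claimed ``full row'' or ``full column'' shape; this is a matter of checking the order carefully, but once it is done both computations reduce transparently to the Note. There is no real obstacle beyond bookkeeping, and in particular the rather pleasant cancellation of the quadratic terms $\tfrac{r(r+1)}{2}$ in the final subtraction is what makes the answer as clean as $rs\log 2\sqrt{2^k}$.
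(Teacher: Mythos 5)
Your proof is correct and follows essentially the same route as the paper's: identify the chunk structure of $I(\alpha)$ and $J(\alpha)$ relative to the rows $A_i$, apply the identity $\widetilde{\Omega}_{m,n}(x)=\log(q)\,x$ for vectors with $q$ equinormed chunks, and then sum over levels using $\sum_{|\alpha|=j}x_\alpha=\sum_{|\alpha|=j}y_\alpha=1_{2^k}$. The only (cosmetic) difference is that you compute $\sum\widetilde{\Omega}(x_\alpha)$ and $\sum\widetilde{\Omega}(y_\alpha)$ separately and watch the two $\tfrac{r(r+1)}{2}$ terms cancel, whereas the paper subtracts first and then simplifies.
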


\begin{proof}
We suggest the reader to take a pencil and scribble some trees and rectangles.
It is clear from (\ref{eq:chunk}) that if $x$ has a single nonzero chunk, then $\widetilde\Omega(x)=0$, so certainly
$\widetilde\Omega(x_\alpha)=0$ if $|\alpha|\geq k-s=r$. On the other hand, if $|\alpha|<r$, then $x_\alpha$ is the sum of $2^{r-|\alpha|}$ many chunks of equal norm and so ${\widetilde{\Omega}}(x_\alpha)=(r-|\alpha|)\log(2) x_\alpha$. Regarding $y_\alpha$, for  $|\alpha| \geq k-r=s$, it is a sum of unit vectors in $2^{k-|\alpha|}$ disjoint $A_i$'s, so
${\widetilde{\Omega}}(y_\alpha)=(k-|\alpha|)\log(2)\, y_\alpha$
and if $|\alpha|<s$ it is the sum of equal norm vectors in all the $2^{r}$ sets $A_i$'s, so
${\widetilde{\Omega}}(y_\alpha)=r \log(2) y_\alpha$.

Finally we compute
\begin{align*}
\frac{\sum_{\alpha \in \mathscr T_k} \big({\widetilde{\Omega}}(x_\alpha)-{\widetilde{\Omega}}(y_\alpha)\big)}{\log 2}&=
\sum_{|\alpha|<r} \widetilde{\Omega}(x_\alpha)
-
\sum_{|\alpha|\geq s} \widetilde{\Omega}(y_\alpha)
-
\sum_{|\alpha|< s} \widetilde{\Omega}(y_\alpha)\\
&=
\sum_{|\alpha|<r} (r-|\alpha|) x_\alpha
-
\sum_{|\alpha|\geq s}(k-|\alpha|) y_\alpha
-
\sum_{|\alpha|< s} ry_\alpha,
\end{align*}
which, taking into account that for each $0\leq n\leq k$
$$
\sum_{|\alpha|=n} x_\alpha = \sum_{|\alpha|=n} y_\alpha= x_\varnothing= y_\varnothing= \sum_{i\in 2^r\times 2^s} e_i,
$$
is equal to
$$\Big(-r(s-1) -\sum_{n \leq r}n +\sum_{n<r}n\Big) x_\varnothing=-rs x_\varnothing$$
and its norm is $rs \sqrt{2^k}$. \end{proof}

\begin{theorem}\label{th:main} Let $(A_i)$ be a partition of $\,\N$.
Let $\,\Omega:\ell_2^0\To \ell_2$ be the Kalton-Peck map and let $\widetilde \Omega$ be the centralizer associated to $(A_i)$, as in Section~\ref{sec:chunk}. If
for every $n$ there exist $n$ sets in $(A_i)$ whose cardinality is at least $n$,
then
 $\widetilde \Omega\,\Omega \nsim 0$.
 In particular,  $\Ext^2(\ell_2, \ell_2)\neq 0$.
\end{theorem}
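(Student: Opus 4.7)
The strategy is to suppose $\widetilde\Omega\,\Omega\sim 0$ and reach a contradiction by pitting the general upper bound (\ref{eq:nutcracker}) against the sharp lower bound of Lemma~\ref{lem:rs}. Both $\Omega$ and $\widetilde\Omega$ commute with the real unitary group $U_\R$, as is immediate from the defining formulas (\ref{eq:KP}) and (\ref{eq:chunk}): a real unitary leaves $|x|$, $\|x\|$ and every chunk norm $\|x_i\|$ unchanged. Hence, under our standing assumption, applying the preceding estimate with $\Phi=\widetilde\Omega$ yields a constant $K$ such that for every $S\subset\N$ of cardinality $2^k$ and every pair of adequate partitions $I,J:\mathscr T_k\To\mathscr P(S)$,
\begin{equation}\label{eq:upperbd-plan}
\Big\|\sum_{\alpha\in\mathscr T_k}\bigl(\widetilde\Omega(x_\alpha)-\widetilde\Omega(y_\alpha)\bigr)\Big\|_2\;\leq\;2Kk\sqrt{2^{k-1}}.
\end{equation}

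The next step is to use the hypothesis on $(A_i)$ to realize the model centralizer $\widetilde\Omega_{2^r,2^s}$ as a restriction of $\widetilde\Omega$ to a finite-dimensional subspace. Given $k=r+s$, choose $2^r$ sets $A_{i_1},\dots,A_{i_{2^r}}$ from the partition, each of cardinality at least $2^s$, and pick $B_j\subset A_{i_j}$ with $|B_j|=2^s$. Let $S=\bigcup_j B_j$ and identify $S$ with $2^r\times 2^s$ by declaring $B_j$ to be the $j$-th column. For any $x\in\ell_2$ supported on $S$ the $(A_i)$-chunk decomposition of $x$ coincides with its $(B_j)$-chunk decomposition, so $\widetilde\Omega|_{\ell_2(S)}$ agrees with the operator $\widetilde\Omega_{2^r,2^s}$ of Lemma~\ref{lem:rs} under this identification.

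Plugging the two lexicographic adequate partitions $I$ and $J$ on $S\equiv 2^r\times 2^s$ described just before Lemma~\ref{lem:rs} into (\ref{eq:upperbd-plan}), that lemma evaluates the left-hand norm as exactly $\log(2)\,rs\sqrt{2^k}$. The resulting inequality forces $rs\leq Ck$ for an absolute constant $C$, and taking $r=s=\lfloor k/2\rfloor$ reduces this to $k^2/4\leq Ck$, which breaks as soon as $k>4C$. This contradiction establishes $\widetilde\Omega\,\Omega\nsim 0$, and therefore $\Ext^2(\ell_2,\ell_2)\neq 0$. The core difficulty — already overcome by the preceding lemmas — is the quadratic-versus-linear gap: homological triviality controls only the \emph{linear}-in-$k$ quantity $2Kk\sqrt{2^{k-1}}$, whereas the interplay between row and column lexicographic orders on $2^r\times 2^s$ extracts a \emph{quadratic}-in-$k$ defect from $\widetilde\Omega_{2^r,2^s}$; the embedding via $(B_j)$ is what transports this rigid finite-dimensional asymmetry into any $\widetilde\Omega$ whose defining partition contains arbitrarily many arbitrarily large pieces.
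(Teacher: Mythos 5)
Your proposal is correct and follows essentially the same route as the paper: apply the upper bound of Lemma~\ref{lem:H}-derived estimate (\ref{eq:nutcracker}) to $\Phi=\widetilde\Omega$ and the two lexicographic adequate partitions of $2^r\times 2^s$, pit it against the exact value $\log(2)\,rs\sqrt{2^k}$ from Lemma~\ref{lem:rs}, and let the quadratic term outgrow the linear one. The only (welcome) difference is that you spell out how the hypothesis on $(A_i)$ produces the embedded copy of $\widetilde\Omega_{2^r,2^s}$ via the subsets $B_j\subset A_{i_j}$, a step the paper leaves implicit, and you parametrize with $k=r+s$, $r=s=\lfloor k/2\rfloor$ whereas the paper fixes $r=s=k$ directly; both are cosmetic variations of the same argument.
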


\begin{proof} For each  $k$ we can find a subspace generated by $4^k$ vectors of the basis and where $\widetilde{\Omega}$ identifies with the centralizer $\widetilde\Omega_{2^k, 2^k}$ as described in Lemma \ref{lem:rs} for $r=s=k$. Then the two previous Lemma together with the symmetry of Kalton-Peck map yield that if $\widetilde{\Omega }	, \Omega \sim 0$ and $K$ is the associated constant then one gets the contradiction
$$\log(2)k^2 2^k= \Big\| \sum_{\alpha \in \mathscr T_k} \big(\widetilde{\Omega}(x_\alpha)-\widetilde{\Omega}(y_\alpha)\big)\Big\|_2  \leq 2\sqrt{2} K k2^k.\vspace{-20pt}$$
\end{proof}

Since the extension induced by $\widetilde \Omega\,\Omega$ lives in $\mathsf{B}$ and is not trivial in  $\mathsf{Q}$ one gets

\begin{cor}
$\Ext^2_\mathsf B(\ell_2, \ell_2)\neq 0$.
\end{cor}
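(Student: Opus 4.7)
My plan is to deduce the corollary almost immediately from Theorem~\ref{th:main} together with the $K$-space discussion at the end of Section~\ref{sec:crit}. Concretely, I would first verify that the spliced four-term exact sequence produced in the proof of Theorem~\ref{th:main},
$$
\xymatrixrowsep{0.5pc}
\xymatrix{0\ar[r] & \ell_2\ar[r] & \widetilde Z_2 \ar[rr] \ar[dr] && Z_2 \ar[r] & \ell_2\ar[r] & 0\\
&&& \ell_2\ar[ur]&&}
$$
actually lives in the subcategory $\mathsf B$ of Banach spaces. The two end spaces are of course Banach; for the middle spaces $\widetilde Z_2 = Z(\widetilde\Omega)$ and $Z_2 = Z(\Omega)$, I invoke the fact that $\ell_2$ is a (Banach) $K$-space (being $B$-convex), so that every exact sequence $0\to\ell_2\to Z\to\ell_2\to 0$ in $\mathsf Q$ has its middle space isomorphic to a Banach space. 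In particular, $\Ext(\ell_2,\ell_2) = \Ext_\mathsf B(\ell_2,\ell_2)$ and both $\widetilde Z_2$ and $Z_2$ belong to $\mathsf B$.

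Next I would invoke the elementary observation recorded in Section~\ref{sec:pre}: if an exact sequence of Banach spaces is zero in $\Ext^2_\mathsf B(X,Y)$, then it is zero in $\Ext^2(X,Y)$. Indeed, a $\mathsf B$-chain of equivalences (\ref{eq:A=B}) between our sequence and the trivial one is, in particular, a $\mathsf Q$-chain. Taking the contrapositive, since Theorem~\ref{th:main} tells us that the above sequence is \emph{not} trivial in $\Ext^2(\ell_2,\ell_2)$, it cannot be trivial in $\Ext^2_\mathsf B(\ell_2,\ell_2)$ either. This yields $\Ext^2_\mathsf B(\ell_2,\ell_2)\neq 0$, as required.

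There is essentially no obstacle: the only point one has to be mildly careful about is the verification that the middle spaces of the spliced sequence are genuinely Banach, rather than merely quasi Banach, and that is settled by the $K$-space argument. The converse direction (going from nontriviality in $\Ext^2_\mathsf B$ to nontriviality in $\Ext^2$) is noted in the paper to be unknown, but we only need the direction we use.
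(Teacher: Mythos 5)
Your proposal is exactly the paper's own argument: the paper deduces the corollary from Theorem~\ref{th:main} by observing that the spliced sequence lives in $\mathsf B$ (via the $K$-space / $B$-convexity remark for the middle spaces) while being nontrivial in $\mathsf Q$, together with the fact noted in Section~\ref{sec:pre} that a $\mathsf B$-chain of equivalences is in particular a $\mathsf Q$-chain. Correct, and the same route.
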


\section{Miscellaneous applications}\label{sec:appl}

The remainder of the paper is devoted to presenting a number of applications of the main result. These range from classical operator theory to Banach modules, including some issues about the difference between $\Ext^2$ and $\Ext^2_\mathsf B$.

\subsection{Spinning around Hilbert space}

Hilbert space is a central object of Banach space theory in many respects. The ensuing application exploits this fact in a rather direct way. 
A Banach space $X$ contains $\ell_2^n$ uniformly complemented if there is a constant $C$ such that, for every $n\in\N$ there are operators $I:\ell_2^n\To X$ and $P:X\To \ell_2^n$ such that $PI$ is the identity on $\ell_2^n$ and $\|I\|\|P\|\leq C$. This property is ``self-dual'' ($X$ has it exactly when $X^*$ does) and weaker than $B$-convexity (= nontrivial type $p>1$).

\begin{cor}
If $X$ and $Y$ are Banach spaces containing $\ell_2^n$ uniformly complemented, then $\Ext^2_\mathsf B(X,Y)\neq 0$.
\end{cor}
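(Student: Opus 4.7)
The plan is to establish the corollary by transferring the nontrivial class of Theorem~\ref{th:main} into $\Ext^2_\mathsf{B}(X,Y)$ via push-out and pull-back along bounded operators $T:\ell_2\to Y$ and $S:X\to\ell_2$ constructed from the uniform complementation. The hope is that the quantitative obstruction from Theorem~\ref{th:main}'s proof survives the transfer and forces the image class to be nonzero.

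First I would write $\ell_2$ as an orthogonal $\ell_2$-sum $\bigoplus_k E_k$ with $E_k\cong\ell_2^{4^k}$, fix a positive $(\epsilon_k)\in\ell_2$, and, using the uniform embeddings $I_k^Y:\ell_2^{4^k}\to Y$ and projections $P_k^X:X\to\ell_2^{4^k}$ (together with their companions $I_k^X,\,P_k^Y$) of norms at most $C$, set
$$
T=\sum_k \epsilon_k\, I_k^Y\,\pi_{E_k}:\ell_2\to Y,\qquad S=\sum_k \epsilon_k\,\iota_{E_k}\, P_k^X:X\to\ell_2,
$$
where $\pi_{E_k}$ and $\iota_{E_k}$ denote the orthogonal projection and inclusion. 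Cauchy--Schwarz gives that both operators are bounded, of norm at most $C\|\epsilon\|_{\ell_2}$. Then, letting $(\mathscr E)$ be the nontrivial sequence of Theorem~\ref{th:main}, I would define $(\mathscr S):=T_*(S^*(\mathscr E))\in\Ext^2_\mathsf{B}(X,Y)$ and claim $(\mathscr S)\ne 0$.

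To verify the claim, suppose $(\mathscr S)=0$. Lemma~\ref{lem:H} furnishes a homogeneous witness $H:X\to Y$ with constant $K$. Pulling $H$ back along $I_k^X$ and projecting via $P_k^Y$, and unwinding the definitions of pull-back and push-out and the concatenation of quasilinear maps, the composite $P_k^Y\circ H\circ I_k^X$ serves as a witness for the finite-dimensional concatenation $\widetilde{\Omega}_{2^k,2^k}\cdot\Omega_{4^k}$ on $\ell_2^{4^k}$, up to the scaling introduced by the $\epsilon_k$. Lemma~\ref{lem:rs} together with the combinatorial argument of the proof of Theorem~\ref{th:main} provides a lower bound of order $k$ on the constant of any such witness, and comparing this with the upper estimate coming from $K$ would yield a contradiction for $k$ large enough.

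The main obstacle is the quantitative bookkeeping in the last step: $(\epsilon_k)\in\ell_2$ forces $\epsilon_k^2=o(1/k)$, so a naive comparison does not automatically beat the lower bound $\sim k$, and one must be cleverer about how the transfer is carried out. The likely fix is either to replicate the adequate-partition argument of Section~\ref{sec:counter} directly inside $X$ and $Y$, building the vectors $x_\alpha,\,y_\alpha$ from the uniform $\ell_2^{4^k}$-embeddings (so no weight factor enters the final estimate), or to work in an ultrapower setup where $\ell_2$ becomes a genuine complemented subspace of $X^{\mathcal U}$ and $Y^{\mathcal U}$ and the push-out/pull-back is unweighted. In either approach the combinatorial heart of the proof remains the nutcracker estimate of Lemma~\ref{lem:rs}, and what changes is only the mechanism by which that finite-dimensional obstruction is promoted into $\Ext^2_\mathsf{B}(X,Y)$.
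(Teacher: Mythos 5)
Your instinct to transfer the nontrivial class via push-out/pull-back along weighted operators $T,S$ built from the uniformly complemented copies runs into exactly the quantitative obstacle you identify, and I do not see how either of your proposed repairs closes it. The middle term in the witness estimate scales like $\epsilon_k^2$, the right-hand side does not scale, so the derived constant for a finite-dimensional witness is of order $K/\epsilon_k^2$; since $(\epsilon_k)\in\ell_2$ forces $\epsilon_k^2$ to be summable, the resulting bound $K/\epsilon_k^2\geq ck$ is satisfied for free by, say, $\epsilon_k^2\sim 1/(k\log^2 k)$, and no contradiction appears. The first repair --- building the vectors $x_\alpha,y_\alpha$ directly inside $X$ and $Y$ --- does not produce a genuine four-term exact sequence of Banach spaces to test, because the uniformly complemented $\ell_2^{4^k}$'s are not coordinated into a sequence-space structure inside $X$ or $Y$, so there is no ``chunked centralizer on $X$'' to splice. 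The second repair, passing to ultrapowers $X^{\mathcal U},Y^{\mathcal U}$ where $\ell_2$ becomes genuinely complemented, gives nonvanishing of $\Ext^2_{\mathsf B}(X^{\mathcal U},Y^{\mathcal U})$ but then faces a descent problem: the canonical embedding goes $X\hookrightarrow X^{\mathcal U}$, while what you would need for a pull-back/push-out back to $(X,Y)$ is a retraction $Y^{\mathcal U}\to Y$, which is not available in general.

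The paper proceeds quite differently, and avoids all weighting. It first shifts dimension: fixing a quotient $Q_1:\ell_1\to X$ with kernel $K^1(X)$ and using projectivity of $\ell_1$ gives $\Ext^2_{\mathsf B}(X,Y)=\Ext_{\mathsf B}(K^1(X),Y)$. Applying this with $X=H=\ell_2$, Theorem~\ref{th:main} gives $\Ext_{\mathsf B}(K^1(H),H)\neq 0$, and a uniform boundedness theorem (\cite[Theorem~2]{2c-2004}) promotes this to $\Ext_{\mathsf B}(K^1(H),Y)\neq 0$ for any $Y$ containing $\ell_2^n$ uniformly complemented --- this is the abstract replacement for the quantitative transfer you were trying to engineer by hand. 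That proves $\Ext^2_{\mathsf B}(H,Y)\neq 0$, in particular $\Ext^2_{\mathsf B}(H,X^*)\neq 0$. Then the duality formula $\Ext^2_{\mathsf B}(E,F^*)=\Ext^2_{\mathsf B}(F,E^*)$, combined with $H^*\cong H$, yields $\Ext^2_{\mathsf B}(X,H)=\Ext_{\mathsf B}(K^1(X),H)\neq 0$; one more application of the uniform boundedness theorem then gives $\Ext^2_{\mathsf B}(X,Y)=\Ext_{\mathsf B}(K^1(X),Y)\neq 0$. The two ingredients your approach is missing are precisely the dimension shift (which converts the $\Ext^2$ question into an $\Ext^1$ question where the uniform boundedness machinery applies) and the $\Ext^2$ duality formula (which lets you dualize to move the hypothesis from $Y$ to $X$).
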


\begin{proof}
We write the proof assuming $X$ and $Y$ separable. The general case does not present any additional difficulty.
Given a (separable) Banach space $X$ we fix an isometric quotient map $Q_1:\ell_1\To X$ and we set $K^1(X)=\ker Q_1$. Then we fix a quotient $Q_2: \ell_1\To K^1(X)$ and set $K^2(X)=\ker Q_2$.
Splicing through $K^1(X)$ we obtains the exact sequence\vspace{-5pt}
\begin{equation*}
\xymatrixrowsep{0.5pc}
\xymatrixcolsep{2.5pc}
\xymatrix{
0\ar[r] & K^2(X)\ar[r] & \ell_1  \ar[rr]  \ar[dr]_<<<<<{Q_2}  &&\ell_1 \ar[r]^{Q_1} & X\ar[r]& 0\\
& && K^1(X) \ar[ur]}
\end{equation*}
that allows us to ``reduce'' the length of extensions. Indeed, for all Banach spaces $Y$, one has
$\Ext^2_\mathsf B(X,Y)=\Ext_\mathsf{B}(K^1(X),Y)$. This follows from the fact that $\ell_1$ is projective in the category of Banach spaces. If we specialize to the case where $X=H$ is a separable Hilbert space, we get
$\Ext^2_\mathsf{B}(H,Y)= \Ext_\mathsf{B}(K^1(H),Y)$ for all $Y$. Now, by the main result $\Ext^2_\mathsf{B}(H,H)= \Ext_\mathsf{B}(K^1(H),H)$ is nonzero and a uniform boundedness argument (cf. \cite[Theorem 2]{2c-2004}) yields
$\Ext_\mathsf{B}(K^1(H),Y)\neq 0$. We have thus arrived at:
\medskip

{\sc Claim.} If $Y$ contains $\ell_2^n$ uniformly complemented, then $\Ext^2_\mathsf{B}(H,Y)\neq 0$. In particular, $\Ext^2_\mathsf{B}(H,X^*)\neq 0$.\medskip

In view of the duality formula $\Ext^2_\mathsf{B}(E,F^*)=\Ext^2_\mathsf{B}(F, E^*)$, valid for all Banach spaces $E$ and $F$, one also has $\Ext^2_\mathsf{B}(X,H^*)\neq 0$. But since  $H^*$ is isometric to $H$ this means that $\Ext_\mathsf{B}(K^1(X),H)\neq 0$ and applying again
\cite[Theorem 2]{2c-2004} we obtain $\Ext^2_\mathsf{B}(X,Y)=\Ext_\mathsf{B}(K^1(X),Y)\neq 0$.
\end{proof}

\subsection{Impossibility of extending certain operators} Kalton shows in \cite{kaltCK} that
the Kalton-Peck spaces $Z_p(\varphi)$ for $1<p<\infty$ have the remarkable property that every $C(K)$-operator defined on any subspace of $Z_p(\varphi)$ extends to  an operator $Z_p(\varphi) \To C(K)$. Curiously, one has:

\begin{cor} Let $X$ be a separable Banach space containing $\ell_2^n$ uniformly complemented. There exists an embedding $u: X \To C[0,1]$ and an operator $v: \ell_2\To C[0,1]/u[X]$ that cannot be extended to $Z_2$.\end{cor}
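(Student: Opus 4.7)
The extension problem for $v\colon \ell_2\to Q$ along the embedding $\ell_2\hookrightarrow Z_2$ is governed by the push-out class $v_*[Z_2]\in\Ext(\ell_2,Q)$: the operator $v$ extends to $Z_2$ if and only if $v_*[Z_2]=0$. My plan is to find $u$ and $v$ such that $(v_*[Z_2])\cup\eta\in\Ext^2_{\mathsf B}(\ell_2,X)$ is nonzero, where $\eta$ denotes the class of the short exact sequence $0\to X\to C[0,1]\to Q\to 0$; this forces $v_*[Z_2]\neq 0$.

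First I select $\xi\in\Ext_{\mathsf B}(\ell_2,X)$ with $[Z_2]\cup\xi\neq 0$ in $\Ext^2_{\mathsf B}(\ell_2,X)$. For $X=\ell_2$ this is precisely the main theorem, with $\xi=[\widetilde Z_2]$: the Yoneda product $[Z_2]\cup[\widetilde Z_2]$ is the class of Diagram~\ref{eq:z2z2}, shown nontrivial in Theorem~\ref{th:main}. For a general separable $X$ containing $\ell_2^n$ uniformly complemented, the preceding corollary gives $\Ext^2_{\mathsf B}(\ell_2,X)\neq 0$, and such $\xi$ can be produced by transferring the (chunked) Kalton--Peck construction through the uniformly complemented $\ell_2^n$'s together with a uniform boundedness argument in the style of \cite{2c-2004}. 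Writing $\xi\colon 0\to X\stackrel{\iota}{\to} E\to \ell_2\to 0$ and using that $E$ is separable, fix an isometric embedding $u_E\colon E\to C[0,1]$ and set $u:=u_E\circ\iota\colon X\to C[0,1]$, $Q:=C[0,1]/u[X]$.

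The key observation is that $u_*\xi=0$ in $\Ext(\ell_2,C[0,1])$: the linear map $(c,e)\mapsto c+u_E(e)$ vanishes on $\{(u(x),-\iota(x))\colon x\in X\}$ and so descends to a bounded retraction of the push-out $(C[0,1]\oplus E)/\{(u(x),-\iota(x))\}$ onto $C[0,1]$. Consequently, by exactness of
\[
\Hom(\ell_2,Q)\stackrel{\delta}{\longrightarrow}\Ext(\ell_2,X)\stackrel{u_*}{\longrightarrow}\Ext(\ell_2,C[0,1]),
\]
there exists $v\colon \ell_2\to Q$ with $\delta(v)=v^*\eta=\xi$; explicitly, $v(\ell):=\pi(u_E(e))$ where $\pi\colon C[0,1]\to Q$ is the quotient and $e\in E$ is any lift of $\ell$ along $E\to\ell_2$ (this is well-defined because $\pi\circ u=0$). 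Anticommutativity of the two mixed connecting homomorphisms attached to $0\to\ell_2\to Z_2\to\ell_2\to 0$ and to $0\to X\to C[0,1]\to Q\to 0$ then yields, up to sign,
\[
(v_*[Z_2])\cup \eta\;=\;[Z_2]\cup (v^*\eta)\;=\;[Z_2]\cup \xi\;\neq\;0
\]
in $\Ext^2_{\mathsf B}(\ell_2,X)$. In particular $v_*[Z_2]\neq 0$ in $\Ext(\ell_2,Q)$, so $v$ does not extend to $Z_2$. The main obstacle is Step~1: for $X=\ell_2$ the existence of $\xi$ with $[Z_2]\cup\xi\neq 0$ is immediate from the main theorem, but for a general $X$ containing $\ell_2^n$ uniformly complemented one needs to carefully transfer the Kalton--Peck machinery.
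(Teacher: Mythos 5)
Your proposal is correct and is essentially the paper's argument recast in the language of Yoneda products and the long exact sequence of $\Ext$: your construction of $u$ and $v$ from a separable embedding $u_E\colon E\to C[0,1]$ of the middle term of $\xi$ (for $X=\ell_2$, $E=\widetilde Z_2$) coincides with the paper's device of extending $u\colon\ell_2\hookrightarrow\ell_\infty$ to $V\colon\widetilde Z_2\to\ell_\infty$ by injectivity and then cutting down to the $C[0,1]$ generated by $V[\widetilde Z_2]$, and the identity $v_*[Z_2]\cup\eta=[Z_2]\cup v^*\eta$ (which in fact holds with no sign) is precisely what the paper's final commutative diagram encodes, namely that the spliced sequence through $C[0,1]$ and $\operatorname{PO}$ is equivalent in $\Ext^2$ to $\widetilde\Omega\,\Omega$. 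Both you and the paper leave the reduction from $X=\ell_2$ to a general $X$ containing $\ell_2^n$ uniformly complemented at roughly the same level of sketchiness.
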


\begin{proof} We will make the proof for $X=\ell_2$ and leave the reader to derive from that and the Claim above the general case. Let  $u: \ell_2 \To\ell_\infty$ be an isomorphic embedding. Since $\ell_\infty$ is an injective Banach space one has a (pull-back) commutative diagram
$$
\xymatrix{
0\ar[r] & \ell_2 \ar[r]^u & \ell_\infty  \ar[r]   &\ell_\infty/u[\ell_2]  \ar[r] & 0\\
0\ar[r] & \ell_2 \ar[r]\ar@{=}[u] & \widetilde Z_2 \ar[r]\ar[u]_V   &\ell_2 \ar[u]_v \ar[r] & 0 }
$$
where $V$ is an extension of $u$ and $v$ is the factorization of $V$ through the quotient map.
The (self-adjoint, if $\mathbb K=\mathbb C$) subalgebra of $\ell_\infty$ generated by $V[\widetilde Z_2]$ is a Banach space isomorphic to $C[0,1]$, which generates a ``separable'' commutative diagram
$$
\xymatrix{
0\ar[r] & \ell_2 \ar[r]^<<<<u & C[0,1] \ar[r]   &C[0,1]/u[\ell_2]  \ar[r] & 0\\
0\ar[r] & \ell_2 \ar[r]\ar@{=}[u] & \widetilde Z_2 \ar[r]\ar[u]_V   &\ell_2 \ar[u]_v \ar[r] & 0 }
$$
Let $\PO$ be the push-out of $v$ and the inclusion of $\ell_2$ into $Z_2$, the original Kalton-Peck space.
Since $\widetilde\Omega\,\Omega \nsim 0$, a look at the commutative diagram
\begin{equation*}
\xymatrixrowsep{0.75pc}\xymatrix{0\ar[r] & \ell_2 \ar[r]^<<<<u   & C[0,1] \ar[rr]   \ar[dr]  &&\PO \ar[r]  &\ell_2\ar[r] & 0 \\
& && C[0,1]/u[\ell_2] \ar[ur]   \\
0\ar[r] & \ell_2\ar[r]  \ar@{=}[uu] & \widetilde Z_2 \ar[rr]  \ar[uu] \ar[dr]  &&Z_2 \ar[r]  \ar[uu]  &\ell_2\ar[r]  \ar@{=}[uu]& 0 \\
& && \ell_2 \ar[ur]  \ar[uu]^<<<<v }
\end{equation*}
reveals that the upper-right push-out sequence cannot split, and thus $v$ cannot be extended to $Z_2$.\end{proof}

A simple consequence of the previous argument and the standard reduction, which for separable Banach spaces has the form $\Ext^2_{\mathsf B}(X,Y) = \Ext_{\mathsf B}(X, \ell_\infty/Y) = \Ext_{\mathsf B}(K^1(X), Y)$, is that the following spaces of extensions are nonzero:
$$\Ext(\ell_2, \ell_\infty/\ell_2), \qquad \Ext(\ell_2, C[0,1]/\ell_2),\qquad\Ext_{\mathsf B}(K^1(\ell_2), \ell_2).$$
All these results hold replacing everywhere $2$ by any $1<p<\infty$. The following one is however specific of $2$: By the main result of \cite{castrica}, we can conclude the existence of a bounded bilinear form on $K^1(\ell_2)$ that cannot be extended to $\ell_1$.


\subsection{Commuting centralizers}\label{sec:comm}
This Section is, to a large extent, independent on the rest of the paper. It contains some remarks about the nature of the ``splicing mapping'' (=Yoneda product)
\begin{equation}\label{eq:ExE-->E2}
\Ext(\ell_p,\ell_p)\times \Ext(\ell_p,\ell_p)\To \Ext^2(\ell_p,\ell_p).
\end{equation}
The first thing one must know about this mapping is the following.

\begin{proposition}\label{prop:FF=0}
If $\,\Phi:\ell_p^0\To\ell_p$ is a centralizer, then $\Phi\,\Phi\sim 0$.
\end{proposition}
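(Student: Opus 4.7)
By Lemma~\ref{lem:H}, it is enough to exhibit a homogeneous map $H\colon\ell_p^0\To\ell_p$ and a constant $K$ such that, for all $x,y\in\ell_p^0$,
$$
\bigl\|H(x+y)-H(x)-H(y)-\Phi\bigl(\Phi(x+y)-\Phi(x)-\Phi(y)\bigr)\bigr\|_p\le K(\|x\|_p+\|y\|_p).\qquad(\star)
$$

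First I would normalize $\Phi$. Replacing it by a strongly equivalent centralizer (which does not change the class of the spliced sequence in $\Ext^2$), I may assume $\Phi$ commutes with $U_\R$ and hence preserves supports; in particular $\Phi(x)\in\ell_p^0$ whenever $x\in\ell_p^0$, so the iterate $\Phi\circ\Phi$ makes sense on $\ell_p^0$.

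The proposed witness is the ``second-order'' map
$$
H(x)\;=\;\tfrac12\,\Phi\bigl(\Phi(x)\bigr),
$$
which is homogeneous. With the shorthand $A=\Phi(x+y)$, $B=\Phi(x)$, $C=\Phi(y)$ and $D=A-B-C$, the quasilinearity of $\Phi$ gives $\|D\|_p\le Q(\Phi)(\|x\|_p+\|y\|_p)$, and the left-hand side of $(\star)$ equals
$$
\tfrac12\bigl[\Phi(A)-\Phi(B)-\Phi(C)\bigr]-\Phi(D).
$$
The heuristic behind this choice is that every centralizer essentially behaves like $\Phi(x)=x f(x)$ with $f$ a ``log-type'' factor, and then the centralizer relation $\Phi(aX)\approx a\Phi(X)$ (applied formally with $a=f(x)$) suggests $\Phi(\Phi(x))\approx xf(x)^2$, a bona fide second-order centralizer whose Leibniz-type behaviour compensates exactly the right-hand side of~$(\star)$.

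The key technical step, which I expect to be the main obstacle, is the verification of $(\star)$ itself. A purely formal expansion via quasilinearity of $\Phi$ yields
$$
\Phi(A)-\Phi(B)-\Phi(C)-\Phi(D)\;=\;O(\|B\|_p+\|C\|_p+\|D\|_p),
$$
and the error $O(\|B\|_p+\|C\|_p)=O(\|\Phi(x)\|_p+\|\Phi(y)\|_p)$ is unacceptable: $\|\Phi(x)\|_p$ is not controlled by $\|x\|_p$ (the ratio grows like $\log n$ on $n$-dimensional blocks for the Kalton--Peck map). This ``parasitic'' error must be absorbed by the missing factor $\tfrac12$ and the $-\Phi(D)$ term on the left; to do so one exploits the stronger centralizer inequality $\|\Phi(aX)-a\Phi(X)\|\le C\|a\|_\infty\|X\|$, applied with $a$ ranging over bounded piecewise-constant functions that detect the ``log-levels'' of $B$ and $C$, to show that the combination $\tfrac12(\Phi(B)+\Phi(C))-\Phi(D)$ reduces, modulo $O(\|x\|_p+\|y\|_p)$, to $\tfrac12\Phi(A)$. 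Equivalently, one may invoke the structural result that every centralizer on $\ell_p$ arises, up to strong equivalence, from a holomorphic scale of quasi-norms (\cite{k-m}), in which framework $H=\tfrac12 F''_0$ with $\Phi=F'_0$ and the required estimate is Kalton's Leibniz-type identity for second derivatives of the optimal selector.
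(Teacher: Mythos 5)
Your proposal gets the global strategy right (reduce to Lemma~\ref{lem:H} and exhibit a ``second-order'' witness), but it leaves the crucial estimate $(\star)$ unproved, and you say yourself that this is the main obstacle. As written this is a proof sketch, not a proof.

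The paper does not attack $(\star)$ directly. It instead uses the lifting criterion from Lemma~\ref{lem:crit-operative}, together with Lemma~\ref{lem:compa}: every centralizer on $\ell_p$ can be written as $\Phi(f)=u\,|f|^{1/2}\Phi_{2p}\big(|f|^{1/2}\big)$ for some centralizer $\Phi_{2p}$ on $\ell_{2p}$, $f=u|f|$ being the polar decomposition. The map
$$
\Lambda(f)=\Big(\tfrac12\,u\big(\Phi_{2p}(|f|^{1/2})\big)^{2},\ \Phi(f)\Big)
$$
is then, by an estimate proved in \cite{tensor}, a centralizer from $\ell_p^0$ into $\ell_p\oplus_\Phi\ell_p^0$ that lifts $\Phi$, which is exactly criterion (c). Two remarks about how this differs from what you propose. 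First, the first coordinate of $\Lambda$ is $\tfrac12 u h^{2}$ with $h=\Phi_{2p}(|f|^{1/2})$, and this is \emph{not} $\tfrac12\Phi(\Phi(f))$: already on $f=2e_1+e_2$ for the Kalton--Peck map the two give genuinely different vectors. Your heuristic ``$\Phi(x)\approx x f(x)$, hence $\Phi(\Phi(x))\approx x f(x)^{2}$'' breaks down precisely because the centralizer inequality requires the multiplier to lie in $\ell_\infty$, while $f(x)$ is unbounded; the ``square root'' in Lemma~\ref{lem:compa} is what repairs this, replacing the ill-behaved $\Phi\circ\Phi$ by a genuine product of a bounded multiplier with $h^{2}$. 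Second, the reference to ``Kalton's Leibniz identity for second derivatives'' is precisely the interpolation-theoretic route that the paper (and the exposition it refines from \cite{cck}) deliberately avoids; citing it as a black box does not produce the concrete quasilinear/centralizer estimate that $(\star)$ demands.

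In short: the choice $H=\tfrac12\Phi\circ\Phi$ is unverified (and in my view unlikely to be a centralizer or even to satisfy $(\star)$ for a general $\Phi$, although I have not produced a counterexample), and the missing step is exactly where the paper brings in Lemma~\ref{lem:compa} and the quantitative lemma from \cite{tensor}.
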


This rather surprising fact was first observed (for centralizers on K\"othe spaces with nontrivial type $p>1$) in \cite[Section 6.4]{cck}, a paper that makes heavy use of interpolation theory.
 Given our commitment to avoid interpolation, let us sketch a more direct proof based on the lifting part of Lemma~\ref{lem:crit-operative}, Lemma~\ref{lem:compa} and an estimate hidden in \cite{tensor} which did not found any application until now.

\begin{proof}
In view of Lemma~\ref{lem:compa} we can assume that $\Phi(f)=u|f|^{1/2}\Phi_{2p}\big(|f|^{1/2}\big)$ for some centralizer $\Phi_{2p}:\ell_{2p}^0\To\ell_{2p}^0$, where $f=u|f|$ is the polar decomposition. It is proven in \cite[Last observation on p. 335]{tensor} that the map
$$
\Lambda(f)=\Big( \frac{u\big(\Phi_{2p}\big(|f|^{1/2}\big)\big)^2}{2}, \Phi(f) \Big)
$$
is a centralizer from $\ell_p^0$ to $\ell_p\oplus_{\Phi}\ell_p^0$  which obviously lifts $\Phi$.
\end{proof}

Given quasilinear maps $\Phi_i: \ell_p^0\To\ell_p$ for $1\leq i\leq 4$, we write $\Phi_1\,\Phi_2\,\sim\,\Phi_3\,\Phi_4$,  if the exact sequences associated to $\Phi_1\,\Phi_2$ and $\Phi_3\,\Phi_4$ are equivalent in $\Ext^2(\ell_p,\ell_p)$. We say that $\Phi$ and $\Psi$ commute
if $\Phi\,\Psi\,\sim\,\Psi\,\Phi$. One has:

\begin{cor}\label{cor:commuting} Let $\Phi, \Psi: \ell_p^0\To\ell_p$ be centralizers.
\begin{itemize}
\item[(1)] $\Phi\,\Psi \sim -\Psi\,\Phi$.
\item[(2)] $\Phi\,\Psi \sim 0$ if and only if $\Psi\,\Phi \sim 0$ if and only if  $\Phi$ and $\Psi$ commute
\item[(3)] $\Phi\,\Psi \sim 0$ if and only if  $(\Phi+\Psi) (\Phi-\Psi) \sim 0$
\end{itemize}
\end{cor}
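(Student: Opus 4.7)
The strategy is to combine Proposition~\ref{prop:FF=0} with the bilinearity of the Yoneda product (\ref{eq:ExE-->E2}) on the Baer-sum linear structure of $\Ext$ and $\Ext^2$. Since the sum of two centralizers is again a centralizer and the induced sequence of $\Phi+\Psi$ is the Baer sum of the sequences induced by $\Phi$ and $\Psi$, we may freely expand ``concatenation products'' as if they were bilinear.

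For part (1), I apply Proposition~\ref{prop:FF=0} to the centralizer $\Phi+\Psi$ to get $(\Phi+\Psi)(\Phi+\Psi)\sim 0$. Bilinearity of the Yoneda product gives
\[
(\Phi+\Psi)(\Phi+\Psi)\sim \Phi\,\Phi+\Phi\,\Psi+\Psi\,\Phi+\Psi\,\Psi,
\]
and since $\Phi\,\Phi\sim 0$ and $\Psi\,\Psi\sim 0$ by the same Proposition, we conclude $\Phi\,\Psi+\Psi\,\Phi\sim 0$, which is exactly $\Phi\,\Psi\sim -\Psi\,\Phi$.

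For part (2), from (1) it is immediate that $\Phi\,\Psi\sim 0$ iff $\Psi\,\Phi\sim 0$. Moreover, $\Phi$ and $\Psi$ commute means $\Phi\,\Psi\sim \Psi\,\Phi$; combined with (1) this is $\Phi\,\Psi\sim -\Phi\,\Psi$, i.e.\ $2\,\Phi\,\Psi\sim 0$ in the linear space $\Ext^2(\ell_p,\ell_p)$, equivalently $\Phi\,\Psi\sim 0$. For part (3), expanding bilinearly and using (1) together with $\Phi\,\Phi\sim\Psi\,\Psi\sim 0$,
\[
(\Phi+\Psi)(\Phi-\Psi)\sim \Phi\,\Phi-\Phi\,\Psi+\Psi\,\Phi-\Psi\,\Psi\sim -\Phi\,\Psi-\Phi\,\Psi\sim -2\,\Phi\,\Psi,
\]
so $(\Phi+\Psi)(\Phi-\Psi)\sim 0$ iff $\Phi\,\Psi\sim 0$.

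The main obstacle is the justification that the Yoneda product may be manipulated bilinearly at the level of quasilinear maps; that is, one must verify that if $\Phi_1,\Phi_2,\Psi$ are centralizers, the sequences induced by $(\Phi_1+\Phi_2)\,\Psi$ and $\Phi_1\,\Psi+\Phi_2\,\Psi$ coincide in $\Ext^2$, and likewise on the right factor. This is standard bilinearity of the Yoneda product combined with the fact that Baer sum of induced sequences corresponds to the sum of quasilinear maps, so one only needs to check that the spliced diagrams fit together, which can be done directly by a push-out/pull-back computation. Once this is available, everything else is a formal consequence of $\Phi\,\Phi\sim 0$.
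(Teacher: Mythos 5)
Your proof is correct and follows essentially the same route as the paper's: apply Proposition~\ref{prop:FF=0} to $\Phi+\Psi$, expand by bilinearity of the Yoneda product, and cancel $\Phi\,\Phi$ and $\Psi\,\Psi$ to get item (1), from which (2) and (3) follow by formal algebra in the $\K$-linear space $\Ext^2(\ell_p,\ell_p)$. The paper dismisses (2) and (3) as ``straightforward'' and waves at the bilinearity issue with ``a slight abuse of notation''; you spell both out, which is a reasonable expansion rather than a departure.
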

\begin{proof}
The mapping in (\ref{eq:ExE-->E2}) is bilinear with respect to the natural linear structures and the linear structure of $\Ext$ corresponds to the pointwise operations of quasilinear maps. Therefore, with a slight abuse of notation and the obvious meaning, we have, in view of Proposition~\ref{prop:FF=0},
$$
0\sim (\Phi+\Psi) (\Phi+\Psi)\sim \Phi\,\Phi + \Phi\,\Psi + \Psi\,\Phi + \Psi\,\Psi\sim
\Phi\,\Psi + \Psi\,\Phi.
$$
This proves the first item and the rest is straightforward.
\end{proof}

Recalling the extension part of Lemma~\ref{lem:crit-operative} we see that (2) implies  that $\Phi:\ell_p^0\To\ell_p^0$ has a quasilinear extension to $\ell_p^0\oplus_{\Psi}\ell_p^0$ if and only if $\Psi:\ell_p^0\To\ell_p^0$ has a quasilinear extension to $\ell_p^0\oplus_{\Phi}\ell_p^0$.
This somewhat inexplicable fact can be explained, at least for $p=2$, as follows. The dual of $\ell_2$ can be identified with $\ell_2$ itself through the pairing $\langle x, y\rangle= \sum_nx(n)y(n)$. Thus, every centralizer $\Upsilon$ on $\ell_2$ has a dual centralizer $\Upsilon^*$ that corresponds to the dual sequence.
It follows from \cite[Section 3.3 and Corollary~4]{cabe} that one can
take $\Upsilon^*=-\Upsilon$. In particular, the dual sequence of the four-term sequence associated to $\Phi\,\Psi$ is that associated to $\Psi\,\Phi$. So,  $\Phi\,\Psi \sim 0$ if and only if $\Psi\,\Phi \sim 0$.


\subsection{The Enflo-Lindenstrauss-Pisier map.}
We do not know if Proposition~\ref{prop:FF=0} is true for arbitrary quasilinear maps instead of centralizers.
In this regard it is remarkable that it is so for the first quasilinear map that appeared in Banach space theory, namely the map
constructed by Enflo, Lindenstrauss and Pisier in their solution of the ``three-space problem'' (cf. \cite[Section 4]{elp}).

The seed of that mapping is the function $\Gamma_1 : \ell_2^3 \To \ell_2^3$ given by
\[
\Gamma_1(x, y, z) = \Big(x, y, \frac{x \left|y\right|}{(\left|x\right|^2 + \left|y\right|^2)^{1/2}}\Big).
\]
Let $P_1$ be the projection of $\ell_2^3$ onto the first two coordinates. Notice that $\Gamma_1(x, y, z) = \Gamma_1(P_1(x, y, z))$. Now we inductively define $ \Gamma_n, P_n: \ell_2^{3^n}\To \ell_2^{3^n}$ by the formul\ae
\begin{align*}
\Gamma_n(x, y, z) &= \Big(\Gamma_{n-1}(P_{n-1}(x)), \Gamma_{n-1}(P_{n-1}(y)), \frac{P_{n-1}(x) \|P_{n-1}(y)\|}{(\|P_{n-1}(x)\|^2 + \|P_{n-1}(y)\|^2)^{\frac{1}{2}}}\Big)\\
P_n(x, y, z) &= (P_{n-1}(x), P_{n-1}(y), 0)
\end{align*}
Then define $\Gamma : \ell_2^0 = \ell_2^0(\ell_2^{3^n}) \longrightarrow \ell_2 = \ell_2(\ell_2^{3^n})$ by $\Gamma((x_n)_n) = (\Gamma_n(x_n))_n$.

\begin{proposition} $\Gamma\,\Gamma\sim 0$ in $\Ext^2(\ell_2,\ell_2)$.
\end{proposition}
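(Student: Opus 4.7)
The plan is to invoke Lemma~\ref{lem:H} with $\Phi=\Psi=\Gamma$: it suffices to exhibit a homogeneous map $H:\ell_2^0\To\ell_2$ and a constant $K$ satisfying
$$\|H(x+y)-H(x)-H(y)-\Gamma(\Gamma(x+y)-\Gamma(x)-\Gamma(y))\|\leq K(\|x\|+\|y\|)$$
for all $x,y\in\ell_2^0$. I claim, perhaps surprisingly, that $H\equiv 0$ will work, because the stronger pointwise identity $\Gamma\circ\Delta\Gamma\equiv 0$ holds, where $\Delta\Gamma(x,y):=\Gamma(x+y)-\Gamma(x)-\Gamma(y)$. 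In other words, the approach is to bypass any estimate by showing that the inner deviation lies exactly in the ``kernel'' of the outer $\Gamma$.

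Since $\Gamma$ acts coordinatewise on $\ell_2^0(\ell_2^{3^n})$ via $\Gamma((x_n)_n)=(\Gamma_n(x_n))_n$, the identity $\Gamma\circ\Delta\Gamma=0$ reduces to $\Gamma_n\circ\Delta\Gamma_n\equiv 0$ for every $n$, which I would establish by induction on $n$. The argument rests on two facts about the auxiliary projections $P_n$:
(i) $P_n$ is idempotent and $\Gamma_n\circ P_n=\Gamma_n$, and
(ii) $P_n\circ\Delta\Gamma_n=0$.
Fact (i) is immediate from the recursion, since $\Gamma_n(x,y,z)$ depends on its argument only through $P_{n-1}(x)$ and $P_{n-1}(y)$, while $P_n^2=P_n$ follows inductively from $P_{n-1}^2=P_{n-1}$ applied block-wise. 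Fact (ii) is also proved inductively: at $n=1$, $\Delta\Gamma_1$ has zero first two coordinates (those two coordinates of $\Gamma_1$ are linear), and $P_1$ annihilates the third coordinate; for $n>1$, the first two block-components of $\Delta\Gamma_n$ equal $\Delta\Gamma_{n-1}$ applied to $P_{n-1}$-projected inputs (by linearity of $P_{n-1}$), which the inductive hypothesis sends to zero, while the third block is killed outright by the zero-block in the formula for $P_n$.

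Combining (i) and (ii) yields
$$\Gamma_n\circ\Delta\Gamma_n=\Gamma_n\circ P_n\circ\Delta\Gamma_n=\Gamma_n(0)=0,$$
the last equality by homogeneity. Hence $\Gamma\circ\Delta\Gamma\equiv 0$, the estimate of Lemma~\ref{lem:H} is trivially satisfied by $H=0$, and $\Gamma\,\Gamma\sim 0$ in $\Ext^2(\ell_2,\ell_2)$. I do not foresee any real obstacle here: the whole proof hinges on spotting the clean compatibility $\Gamma_n\circ P_n=\Gamma_n$ together with $P_n\circ\Delta\Gamma_n=0$, both immediate from the E-L-P recursion. What makes the example ``remarkable'', as the authors anticipate, is that the identity $\Gamma\circ\Delta\Gamma=0$ holds \emph{exactly} rather than merely up to a quasilinear correction, so no centralizer machinery analogous to Proposition~\ref{prop:FF=0} is needed.
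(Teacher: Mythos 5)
Your proof is correct and, at its heart, takes the same route as the paper: both arguments establish the exact pointwise identity $\Gamma\circ\Delta\Gamma\equiv 0$ by induction on $n$ and then invoke Lemma~\ref{lem:H} with $H=0$. What you do differently, and what makes your write-up cleaner, is that you factor the induction through the auxiliary claim (ii) $P_n\circ\Delta\Gamma_n=0$ and then conclude via (i) $\Gamma_n\circ P_n=\Gamma_n$. The paper instead inducts directly on $\Gamma_n\circ\Delta\Gamma_n=0$, and its inductive step (displayed with an odd $\leq$ and with the first two block-coordinates written as $0$) glosses over the fact that the first two blocks of $\Delta\Gamma_n(u,v)$ are $\Delta\Gamma_{n-1}(P_{n-1}x_1,P_{n-1}x_2)$ and $\Delta\Gamma_{n-1}(P_{n-1}y_1,P_{n-1}y_2)$, which are \emph{not} zero; killing the third block of $\Gamma_n(\Delta\Gamma_n(u,v))$ then actually requires $P_{n-1}(\Delta\Gamma_{n-1}(\cdot,\cdot))=0$, which is your claim (ii), not the literal inductive hypothesis. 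So your decomposition into (i) and (ii) is not just a stylistic reorganization: it supplies exactly the stronger statement needed for the induction to close, and in that sense fills a small gap in the argument as printed. Your observation that (ii) together with (i) immediately yields the paper's claim for free is the right way to see it.
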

\begin{proof}
We claim that
$
\Gamma_n(\Gamma_n(u+v) - \Gamma_n(u) - \Gamma_n(v)) = 0
$ 
 for every $u,v \in \ell_2^{3^n}$ and all $n$. This implies that  condition (b) in Lemma~\ref{lem:H} is satisfied with $H=0$.

We proceed by induction.
Write $u=(x_1, y_1, z_1), v=(x_2, y_2, z_2)$.
Then, when $n=1$ one has
\[
\Gamma_1(x_1 + x_2, y_1 + y_2, z_1 + z_2) - \Gamma_1(x_1, y_1, z_1) - \Gamma_1(x_2, y_2, z_2) = \Big(0, 0, \frac{(x_1 + x_2)\left|y_1 + y_2\right|}{(\left|x_1 + x_2\right|^2 + \left|y_1 + y_2\right|^2)^{\frac{1}{2}}}\Big)
\]
so it follows that $\Gamma_1(\Gamma_1(u + v) - \Gamma_1(u) - \Gamma_1(v)) = 0$.

Assume the claim true for $n-1$. Then
\begin{align*}
\Gamma_{n}(x_1 + x_2, y_1 &+ y_2, z_1 + z_2) - \Gamma_{n}(x_1, y_1, z_1) - \Gamma_{n}(x_2, y_2, z_2) \\
&\leq \left(0, 0, \frac{P_{n-1}(x_1 + x_2) \|P_{n-1}(y_1 + y_2)\|}{(\|P_{n-1}(x_1 + x_2)\|^2 + \|P_{n-1}(y_1 + y_2)\|^2)^{\frac{1}{2}}}\right)
\end{align*}
and the claim follows as  we have $\Gamma_n(0,0,z)=0$ for all $z$.
\end{proof}

\subsection{Incomparable centralizers}
The study of the ``order'' structure of $\Ext(X,Y)$ spaces is still incipient.
Suppose we are given an exact sequence
\begin{equation}\label{eq:weare}
\xymatrix{
0\ar[r] & Y\ar[r] &Z \ar[r] & X\ar[r]& 0 }
\end{equation}
We say that
$0\To Y'\To Z'\To X\To 0$ is a push-out of (\ref{eq:weare}) if one has a commutative diagram
$$
\xymatrix{
0\ar[r] & Y\ar[r]\ar[d]_u &Z \ar[r] \ar[d] & X\ar[r] \ar@{=}[d] & 0\\
0\ar[r] & Y'\ar[r] &Z' \ar[r] & X\ar[r]& 0 }
$$
Dually, we say that $0\To Y\To Z''\To X''\To 0$ is a pull-back of (\ref{eq:weare}) if one has a commutative diagram
$$
\xymatrix{
0\ar[r] & Y\ar[r]  \ar@{=}[d] &Z \ar[r] \ar[d] & X\ar[r]  & 0\\
0\ar[r] & Y\ar[r] &Z'' \ar[r] \ar[u] & X'' \ar[u]^v \ar[r]& 0 }
$$
In \cite[Theorem 2.1]{ccky} it was shown that for every separable Banach space $X$ the space $\Ext_{\mathsf B}(X, C[0,1])$ admits an ``initial'' element $0\To C[0,1] \To Z \To X\To 0$
in the sense that any other element of $\Ext_{\mathsf B}(X, C[0,1])$ arises as a push-out for a suitable endomorphism $u$ of $C[0,1]$. In \cite{ccfm} it was shown that $\Ext(\ell_2, \ell_2)$ contains no initial (or final, via the obvious pull-back definition) element, thus answering a question in \cite{more}. The results in this paper show that the ``order'' structure of
$\Ext(\ell_2, \ell_2)$ is rather involved. With the same notations as in the main Theorem:

\begin{cor} The short exact sequences induced by $\widetilde\Omega$ and $\Omega$ are incomparable, that is, neither of them is a pull-back or a push-out of the other.
\end{cor}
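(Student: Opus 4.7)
The plan is to combine three facts already in the paper: (i) the main theorem, which gives $\widetilde\Omega\,\Omega\nsim 0$ in $\Ext^2(\ell_2,\ell_2)$; (ii) Proposition~\ref{prop:FF=0}, which gives $\Omega\,\Omega\sim 0$ and $\widetilde\Omega\,\widetilde\Omega\sim 0$ because both maps are centralizers; and (iii) Corollary~\ref{cor:commuting}(2), by which $\Omega\,\widetilde\Omega\sim 0$ is equivalent to $\widetilde\Omega\,\Omega\sim 0$, so the main theorem reads equally as $\Omega\,\widetilde\Omega\nsim 0$. The missing piece is the classical naturality of the Yoneda product: for operators $u,v\colon\ell_2\to\ell_2$ and extensions $[\xi],[\eta]\in\Ext(\ell_2,\ell_2)$ one has $(u_{*}[\xi])\cdot[\eta]=u_{*}([\xi]\cdot[\eta])$ and $[\xi]\cdot(v^{*}[\eta])=v^{*}([\xi]\cdot[\eta])$, which diagrammatically amount to stacking the push-out (resp.\ pull-back) square on top of the splicing.

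I would argue by contradiction, treating the four symmetric possibilities in parallel. Suppose first that the $\widetilde\Omega$-sequence is a push-out of the $\Omega$-sequence via some $u\colon\ell_2\to\ell_2$, so that $[\widetilde\Omega]=u_{*}[\Omega]$ in $\Ext(\ell_2,\ell_2)$. Then
\[
[\widetilde\Omega]\cdot[\Omega]=(u_{*}[\Omega])\cdot[\Omega]=u_{*}\bigl([\Omega]\cdot[\Omega]\bigr)=u_{*}(0)=0,
\]
directly contradicting Theorem~\ref{th:main}. Suppose instead that the $\widetilde\Omega$-sequence is a pull-back of the $\Omega$-sequence via $v\colon\ell_2\to\ell_2$, so $[\widetilde\Omega]=v^{*}[\Omega]$. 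Then
\[
[\Omega]\cdot[\widetilde\Omega]=[\Omega]\cdot v^{*}[\Omega]=v^{*}\bigl([\Omega]\cdot[\Omega]\bigr)=0,
\]
which contradicts $\Omega\,\widetilde\Omega\nsim 0$. The two remaining cases, in which the $\Omega$-sequence is a push-out or a pull-back of the $\widetilde\Omega$-sequence, are handled identically after swapping the two maps and invoking $\widetilde\Omega\,\widetilde\Omega\sim 0$ in place of $\Omega\,\Omega\sim 0$.

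The main obstacle is not computational but bookkeeping: one must verify that the naturality formulas for the Yoneda product used above are genuinely available in the quasi Banach setting. This is standard homological algebra and can be read off from the push-out/pull-back and splicing diagrams of Sections~\ref{sec:pre} and~\ref{sec:crit} by a routine diagram chase; no new estimate beyond those already in the paper is required.
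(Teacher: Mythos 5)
Your proof is correct and follows essentially the same route as the paper's: both reduce the four cases to $\Psi\,\Phi\sim 0$ by combining $\Phi\,\Phi\sim 0$ (Proposition~\ref{prop:FF=0}), the symmetry $\Phi\,\Psi\sim 0\iff\Psi\,\Phi\sim 0$ (Corollary~\ref{cor:commuting}), and naturality of splicing under push-out/pull-back, then contradict Theorem~\ref{th:main}. The only cosmetic difference is that the paper phrases the naturality directly on quasilinear maps ($\Psi\sim u\circ\Phi$, $\Psi\sim\Phi\circ u$), whereas you phrase it via $u_{*}$ and $v^{*}$ on Yoneda classes; these are the same computation.
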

\begin{proof} Let  $\Psi$ and $\Phi$ be centralizers on $\ell_2$ and let us show that if (the extension induced by) $\Psi$ is either a push-out or a pull-back of (that induced by) $\Phi$, then $\Psi\,\Phi\sim 0$. Assume that $\Psi$ is a push-out of $\Phi$ and write $\Psi\sim u\circ \Phi$, where $u$ is an operator on $\ell_2$, with the obvious and harmless abuse of notation. Then
$$
\Psi\,\Phi\sim u\circ(\Phi\,\Phi)\sim u\circ 0\sim 0.
$$
If we assume that $\Psi\sim \Phi\circ u$ instead, then
$$
\Phi\,\Psi \sim \Phi\, (\Phi\circ u) \sim (\Phi\,\Phi)\circ u\sim 0\qquad\implies\qquad\Psi\,\Phi\sim 0.\vspace{-20pt}
$$
\end{proof}

\subsection{On $\Ext^2(\ell_p,\ell_p)$ for $0<p<\infty$}\label{sec:all p} The purpose of this section is to prove that the reflections of the centralizers used in Theorem~\ref{th:main} provide nontrivial elements of $\Ext^2(\ell_p,\ell_p)$ for each $1\leq p<\infty$ and a nontrivial element of $\Ext^2_{\ell_\infty}(\ell_p,\ell_p)$ for $0<p<\infty$.

\begin{lemma}
Let $X, Y, W$ be sequence spaces, $\Psi: X^0\To W$ and $\Phi: W^0\To Y$ support preserving centralizers. Assume further that $Y$ is the dual of some sequence space.
Then $\Phi\,\Psi\sim 0$ in $\Ext^2(X, Y)$ if and only if the spliced sequence
\begin{equation}
\xymatrixrowsep{0.5pc}
\xymatrix{
0\ar[r] & Y \ar[r] & Z(\Phi)  \ar[rr]  \ar[dr]  && Z(\Psi)   \ar[r] & X\ar[r]& 0 \\
& && W \ar[ur]  \\}
\end{equation}
is trivial in $\Ext^2_{\ell_\infty}(X,Y)$.
\end{lemma}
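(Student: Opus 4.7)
The forward implication is immediate: a splitting diagram in $\Ext^2_{\ell_\infty}(X,Y)$ remains a splitting diagram in $\Ext^2(X,Y)$ after forgetting the $\ell_\infty$-module structure, so triviality in the module category implies triviality in the quasi-Banach category.

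For the converse, my plan is to mimic the averaging argument that precedes Corollary~\ref{withKP}, but using the full unitary group $U=\{u\in\ell_\infty:|u|=1\}$ in place of $U_\R$. Regarded in the weak* topology of $\ell_\infty=\ell_1^{*}$, the group $U$ is a compact abelian group and carries a normalized Haar measure $du$. Given a witness $H:X^0\To Y$ for $\Phi\,\Psi\sim 0$ in $\Ext^2(X,Y)$ furnished by Lemma~\ref{lem:H}, I would form the averaged map
\[
\widetilde{H}(x)=\int_{U}u^{-1}H(ux)\,du,
\]
interpreting the integral weak*-ly in $Y$. This is where the hypothesis that $Y$ is a dual sequence space enters essentially: it supplies the weak* topology that makes this Pettis-type integral meaningful. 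For finitely supported $x$ the orbit $U\cdot x$ is a finite-dimensional torus, and after replacing $H$ by a strongly equivalent support-preserving version (a harmless bounded homogeneous correction) the integrand takes values in the finite-dimensional slice of $Y$ indexed by $\supp(x)$.

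Once $\widetilde{H}$ is in hand, I would check three properties. First, $\widetilde{H}$ is still a witness in the sense of Lemma~\ref{lem:H}, obtained by averaging the inequality satisfied by $H$ and using that $\Phi$ and $\Psi$ are centralizers (hence commute with $U$ up to bounded error). Second, $\widetilde{H}$ commutes with $U$ exactly, via the standard left-invariance computation
\[
\widetilde{H}(vx)=\int_{U}u^{-1}H(uvx)\,du=v\int_{U}w^{-1}H(wx)\,dw=v\widetilde{H}(x)\qquad(v\in U).
\]
Third, and most importantly, exact $U$-commutation of a quasi-additive homogeneous map on a sequence space promotes to the centralizer-type bound
\[
\|\widetilde{H}(ax)-a\widetilde{H}(x)-\Phi(\Psi(ax)-a\Psi(x))\|\le K'\,\|a\|_\infty\,\|x\|\qquad(a\in\ell_\infty).
\]
This is proved by writing $a/\|a\|_\infty=(u+v)/2$ with $u,v\in U$, applying the quasi-additive bound of $\widetilde{H}$ to the pair $(ux/2,vx/2)$, and using the centralizer bound for $\Psi$ to refold the $\Phi$-argument. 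A routine module analog of Lemma~\ref{lem:H}, whose statement and proof mirror the original, then identifies this combined estimate with the triviality of the spliced sequence in $\Ext^2_{\ell_\infty}(X,Y)$.

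The principal technical hurdle is the weak* measurability and integrability of $u\mapsto u^{-1}H(ux)$; this is precisely what the two auxiliary hypotheses — that $Y$ be a dual sequence space and that $\Phi,\Psi$ be support-preserving — are designed to overcome, allowing the integration to take place in a fixed finite-dimensional piece of $Y$ where everything is elementary.
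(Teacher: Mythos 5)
Your overall strategy—average over the full unitary group $U$ to force exact $U$-equivariance—is the same idea the paper uses, but two steps of your implementation have real gaps.

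First, the integration device. You propose to average with respect to Haar measure on $U\cong\mathbb{T}^{\N}$ via a Pettis-type integral, arguing that because the integrand lives in a finite-dimensional slice of $Y$ the integral is elementary. But finite-dimensionality does not fix the \emph{measurability} problem: a quasilinear map (or a witness $H$) carries no continuity or measurability hypothesis whatsoever, and for $x$ with $|\supp(x)|=n$ the orbit $U\cdot x$ is a genuine $n$-torus, not a finite set. So the map $u\mapsto\langle u^{-1}H(ux),y_*\rangle$ may fail to be Haar-measurable and the Bochner/Pettis integral need not exist. (The earlier averaging in the paper over $U_\R\cong\{\pm1\}^\N$ works precisely because there the orbit of a finitely supported vector is \emph{finite}, making the integrand locally constant.) The paper sidesteps this entirely by replacing Haar measure with an \emph{invariant mean} $m$ on $\ell_\infty(U)$ (available since $U$ is abelian, hence amenable) and a weak* integral against $m$; an invariant mean is finitely additive on all bounded functions, so no measurability is required. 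Your plan either needs to be recast with an invariant mean, or you must prove that $H$ can be regularized to a Haar-measurable map without losing the witness estimate — neither is automatic.

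Second, the object you average. You average the witness $H$ of Lemma~\ref{lem:H} and then invoke a ``routine module analog of Lemma~\ref{lem:H}'' to conclude triviality in $\Ext^2_{\ell_\infty}$. Such an analog is not stated or proved in the paper, and formulating it correctly is not entirely routine (the centralizer estimate you write down, $\|\widetilde H(ax)-a\widetilde H(x)-\Phi(\Psi(ax)-a\Psi(x))\|\le K'\|a\|_\infty\|x\|$, does not by itself identify $\widetilde H$ with the data of a centralizer extension, and your ``refolding'' step via $a=(u+v)/2$ is sketched only for $\|a\|_\infty\le 1$ and leaves the $\Phi\circ\Psi$ cross-terms implicit). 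The paper avoids this by working one level up: it takes a genuine quasilinear \emph{extension} $\Gamma:W^0\oplus_\Psi X^0\To Y$ of $\Phi$ from Lemma~\ref{lem:crit-operative}(b), averages $\Gamma$ itself to get $\Lambda(w,x)=\int_U u^{-1}\Gamma(u(w,x))\,dm(u)$, and verifies directly that $\Lambda$ is a centralizer extending $\Phi$ — which is exactly what Lemma~\ref{lem:crit-operative} requires in the module category, with no extra lemma needed. If you insist on averaging $H$, you would have to supply the missing module version of Lemma~\ref{lem:H} with proof; averaging the extension $\Gamma$ is cleaner and matches the existing criterion without new machinery.

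The forward implication you state is correct and is essentially what the paper observes.
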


\begin{proof}
This is clearly a manifestation of the amenability of the Banach algebra $\ell_\infty$.
Regrettably, the proof is not so direct as it should be. It suffices to see that if $\Phi$ can be extended to a  quasilinear map $\Gamma:  W^0\oplus_\Psi X^0 \To Y$, then it can be extended to a  quasilinear centralizer as well. We write the proof for complex scalars. The real case then follows by the usual change of base procedure. The advantage of having complex scalars is that every $a:\N\To\mathbb C$ with $\|a\|_\infty\leq 1$ can be written as the average of four unitaries:
$$
a=\frac{u_1+u_2+u_3+u_4}{4},
$$
and so every quasilinear map commuting with unitaries is a centralizer. We will not insist on this point. Let us assume that $\Phi$ commutes with the unitary group in the sense that $\Phi(uw)=u\Phi(w)$ for every finitely supported $w\in W$ and every $u\in U$, the unitary group of $\ell_\infty$. The general case follows immediately.

Let $m$ be an invariant mean for $\ell_\infty(U)$. Such an $m$ exists because $U$ is abelian, hence amenable. If you find this paragraph enigmatic, take a look on any book about invariant means, for instance, Greenleaf's \cite{greenleaf}. Let, further, $Y_*$ be the predual of $Y$.
Every bounded function $f:U\To Y$ has a weak* integral defined as
$$
\left\langle\int_U f(u)\,dm(u), y_*\right\rangle=
\int_U  \left\langle f(u), y_*\right\rangle dm(u)\qquad(y_*\in Y_*).
$$
We define a mapping $\Lambda:  W^0\oplus_\Psi X^0 \To Y$ by the weak* integral
\begin{equation}
\Lambda(w,x)=\int_Uu^{-1}\Gamma(u(w,x))\,dm(u).
\end{equation}
 The definition is correct because for finitely supported $z$ and $x$ the orbit $U(z,x)=\{(uz,uz): u\in U\}$ is confined in some finite dimensional subspace of $Z(\Psi)$ (actually of $W^0\oplus_\Psi X^0$),  where $\Gamma$ is bounded. Let us see that $\Lambda$ is a centralizer extending $\Phi$. First of all since $\Gamma$ is an extension of $\Phi$, we have
$$
\Lambda(w,0)=\int_Uu^{-1}\Gamma(u(w,0))\,dm(u)=
\int_Uu^{-1}\Phi(uw)\,dm(u)= \int_U\Phi(w)\,dm(u)= \Phi(w),
$$
so $\Lambda$ extends $\Phi$. Clearly, $\Lambda$ is homogeneous.
To check quasilinearity, let $Q$ be the quasilinearity constant of $\Gamma$ and pick points $(w,x)$ and $(w',x')$. We have
\begin{align*}
\|\Lambda\big((w,x)&+(w',x')\big)- \Lambda(w,x)-  \Lambda(w',x')\|
\\
&=
\left\|	 \int_Uu^{-1}\big(\Gamma(u((w,x)+(w',x')))- \Gamma(u(w,x))-  \Gamma(u(w',x'))\big)\,dm(u)   \right\|
\\
&\leq
	 \int_U \left\| \big(\Gamma(u((w,x)+(w',x')))- \Gamma(u(w,x))-  \Gamma(u(w',x'))\big) \right\|\, dm(u)
	 \\
&\leq
	 \int_U Q\big{(}\|u(w,x)\| + \|u(w',x')\| \big{)} \, dm(u)
	 \\
	 &\leq M\big{(}\|(w,x)\| + \|(w',x')\| \big{)},
\end{align*}
where $M$ depends only on $Q$ and the centralizer constant of $\Psi$. Finally, $\Lambda$ is a centralizer since it commutes with the actions of $U$ (by the invariance of $m$) and it is quasilinear.
\end{proof}

The requirement that $Y$ is a dual sequence space can be considerably relaxed: the Lemma is true when $X$ and $W$ are sequence spaces and $Y$ is any Banach $\ell_\infty$-module. The proof, however, would lead us too far away from our present subject.

\begin{proposition}\label{prop:trivialornot}
Let\, $0<q<p<\infty$. Let $\Phi_p, \Psi_p:\ell_p^0\To\ell_p$ be support preserving centralizers  and let\, $\Phi_q,\Psi_q$ be the corresponding reflections on $\ell_q$. Then:
\begin{itemize}
\item $\Phi_p\, \Psi_p$ induces a trivial sequence of quasi Banach modules if and only if $\Phi_q\, \Psi_q$ does.
\item If $q\geq 1$, then $\Phi_p\, \Psi_p$ induces a trivial sequence of Banach spaces if and only if $\Phi_q	\, \Psi_q$ induces a trivial sequence of quasi-Banach spaces.
\end{itemize}
\end{proposition}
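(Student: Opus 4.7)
The strategy is to reflect witnesses of triviality between the $p$-case and the $q$-case. Recall from Lemma~\ref{lem:H} that $\Phi_p\,\Psi_p\sim 0$ is equivalent to the existence of a homogeneous map $H_p:\ell_p^0\to\ell_p$ and a constant $K$ such that
\[
\bigl\|H_p(x+x')-H_p(x)-H_p(x')-\Phi_p\bigl(\Psi_p(x+x')-\Psi_p(x)-\Psi_p(x')\bigr)\bigr\|_p\leq K(\|x\|_p+\|x'\|_p),
\]
and analogously for the $q$-case. My goal is to transform a witness $H_p$ into a witness $H_q$ via the reflection formula of Lemma~\ref{lem:compa}, namely
\[
H_q(f)=u\,|f|^{q/r}\,H_p\bigl(|f|^{q/p}\bigr),\qquad q^{-1}=r^{-1}+p^{-1},
\]
where $f=u|f|$ is the polar decomposition, and symmetrically in the other direction.

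First I would reduce to working with centralizer-type witnesses in the $\ell_\infty$-module category. The Lemma immediately preceding this proposition shows that, as long as the target is a dual sequence space (which is the case since $p,q\geq 1$ will cover the second bullet, and for the first bullet we can transfer directly as modules), triviality of the splicing in $\Ext^2$ is equivalent to its triviality in $\Ext^2_{\ell_\infty}$. A second application of the averaging argument over the compact unitary group $U_{\mathbb R}$ (used in the lemma preceding Corollary~\ref{withKP} and already exploited in the amenability lemma) lets us assume without loss that the witness $H_p$ is itself support-preserving and commutes with $U_{\mathbb R}$; in the complex case an analogous averaging over $U$ makes $H_p$ a genuine centralizer.

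Next I would verify that the reflected $H_q$ defined above is a witness for $\Phi_q\,\Psi_q\sim 0$. The key identity is that reflection is compatible with splicing: Lemma~\ref{lem:compa} already tells us that the centralizer estimate is preserved by the reflection operation, and the same algebraic manipulation applied to \emph{three} terms (instead of two), combined with the support-preserving assumption on $H_p,\Phi_p,\Psi_p$, transforms the quasilinear witness identity for the pair $(\Phi_p,\Psi_p)$ into the corresponding identity for $(\Phi_q,\Psi_q)$. The calculation runs in parallel on each support block, reduces to the scalar inequalities hidden in the proof of Lemma~\ref{lem:compa}, and converts $\ell_p$-norms of $|f|^{q/p}$ into $\ell_q$-norms of $f$ via the relation $\||f|^{q/p}\|_p^{p/q}=\|f\|_q$. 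The converse direction is symmetric: Lemma~\ref{lem:compa} says every centralizer on $\ell_q$ arises, up to strong equivalence, as a reflection from one on $\ell_p$, so the construction is invertible.

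For the second bullet ($q\geq 1$, Banach vs.\ quasi-Banach), I would invoke the $K$-space criterion recalled at the end of Section~\ref{sec:crit}. Since $\ell_p$ is a Banach $K$-space for $1<p<\infty$, every extension of $\ell_p$ by $\ell_p$ in $\mathsf Q$ is automatically in $\mathsf B$, and the auxiliary space $\square$ appearing in Lemma~\ref{lem:thesquare}(b) is Banach. Hence triviality of $\Phi_p\,\Psi_p$ in $\Ext^2_{\mathsf B}(\ell_p,\ell_p)$ coincides with its triviality in $\Ext^2(\ell_p,\ell_p)$, and the first bullet yields the equivalence with triviality of $\Phi_q\,\Psi_q$ in $\Ext^2(\ell_q,\ell_q)$; for $q=1$ one cannot upgrade quasi-Banach to Banach, which explains the asymmetry in the statement.

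The main obstacle is the third step: checking that the nonlinear reflection formula intertwines the three-point quasilinear defect of $(H_p,\Phi_p,\Psi_p)$ with that of $(H_q,\Phi_q,\Psi_q)$ uniformly in the choice of vectors. This is where the support-preserving reduction is essential, because reflection is defined pointwise via polar decomposition and does not behave well on sums with overlapping supports; the standard remedy is to decompose $x+x'$ into the common-support part and the disjoint parts and estimate each contribution separately using the Lipschitz behaviour of the maps $t\mapsto t^{q/p}$ on the relevant scales.
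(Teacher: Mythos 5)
Your plan has two genuine gaps, one in each direction.

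For the direction ``triviality at $p$ implies triviality at $q$,'' you propose to reflect a witness $H_p$ (from Lemma~\ref{lem:H}(b)) to obtain $H_q$. The difficulty you identify at the end is the real one, and the suggested remedy (splitting $x+x'$ into common-support and disjoint parts and using Lipschitz estimates for $t\mapsto t^{q/p}$) does not resolve it. The obstruction is that the witness inequality is evaluated at $x$, $x'$, and $x+x'$, whereas the reflection formula is a \emph{pointwise} fractional power: $|f+f'|^{q/p}$ is in no useful sense expressible in terms of $|f|^{q/p}$ and $|f'|^{q/p}$ when supports overlap, and the three different nonlinear reflections (for $H$, $\Phi$, and $\Psi$) do not intertwine with the three-term defect $\Psi(x+x')-\Psi(x)-\Psi(x')$. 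The paper sidesteps this entirely by using the \emph{lifting} criterion instead: Lemma~\ref{lem:crit-operative}(c) provides a quasilinear lifting $\Gamma:\ell_p^0\to\ell_p^0\oplus_{\Phi_p}\ell_p^0$ of $\Psi_p$, and one reflects $\Gamma$ by the one-variable formula $\Gamma_q(f)=u|f|^{q/r}\Gamma(|f|^{q/p})$. Because $\Gamma$ is a function of a single argument, no clash of overlapping supports arises; one checks directly that $\Gamma_q$ is a centralizer lifting of $\Psi_q$ using H\"older's inequality and the centralizer estimate for $\Gamma$. This choice of criterion is precisely what makes the argument go through.

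For the direction ``triviality at $q$ implies triviality at $p$,'' you assert that ``the converse direction is symmetric \dots\ so the construction is invertible.'' This is not so: the reflection of Lemma~\ref{lem:compa} only goes from the larger exponent $p$ to the smaller $q$, and the surjectivity statement (``all centralizers on $\ell_q$ arise this way'') does not hand you a formula that pulls an arbitrary witness on $\ell_q$ back to one on $\ell_p$ with the witness property intact. The paper's argument here is of a completely different nature and genuinely nontrivial. It starts from the splitting diagram of modules for $\Phi_q\,\Psi_q$, applies the functor $\Hom(\ell_r,-)$, and uses the identification $Z(\Phi_p)\cong\Hom(\ell_r,Z(\Phi_q))$ (with $q^{-1}=r^{-1}+p^{-1}$) to recognize the resulting diagram as the splitting diagram for $\Phi_p\,\Psi_p$. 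The delicate step is that $\Hom(\ell_r,-)$ is only left exact, so one must separately check that the two short sequences through $\Hom(\ell_r,\square)$ remain exact; this is done by producing pull-backs that would give nontrivial elements of $\Ext_{\ell_\infty}(\ell_r,\ell_q)$ if exactness failed, and then invoking the vanishing $\Ext_{\ell_\infty}(\ell_r,\ell_q)=0$ for $r\neq q$ from \cite{cabe}. None of this is present in your plan, and it cannot be replaced by ``symmetry of the reflection.''

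Your handling of the second bullet (the $K$-space reduction and the remark about $q=1$) is in the right spirit, and your proposal to pass through the module category via the preceding lemma matches the paper's first reduction. But as it stands, both core implications are unsupported.
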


\begin{proof}
By the preceding Lemma, it suffices to prove the first item. Please note that,
replacing every occurrence of ``trivial'' by ``nontrivial'' does not alter the Proposition.
Also, if $q>1$, then the word ``quasi'' can be eliminated everywhere in the Proposition since $\Phi_q\, \Psi_q$ induces a sequence of Banach spaces, after renorming.
\medskip

We first prove that for $p>q$, ``triviality at $p$'' implies ``triviality at $q$'', exploiting the lifting part of Lemma~\ref{lem:crit-operative}. The hypothesis means that there is a  centralizer
$\Gamma: \ell_p^0 \To \ell_p^0\oplus_{\Phi_p}\ell_p^0$ such that that $\pi(\Gamma(f))=\Psi_p(f)$ for every $f\in\ell_p^0$. Let us define $\Gamma_q: \ell_q^0 \To \ell_q^0\oplus_{\Phi_q}\ell_q^0$ taking
$$
\Gamma_q(f)= u|f|^{q/r}\Gamma \big{(}|f|^{q/p}\big{)},
$$
where $f=u|f|$ is the polar decomposition. Obviously $\Gamma_q$ is a lifting of $\Psi_q$ since
$$
\pi(\Gamma_q(f))= \pi \big{(}u|f|^{q/r}\Gamma \big{(}|f|^{q/p}\big{)} \big{)}
=
u|f|^{q/r}  \pi \big{(} \Gamma \big{(}|f|^{q/p}\big{)} \big{)}
=
u|f|^{q/r}  \Psi_p  \big{(}|f|^{q/p}\big{)} = \Psi_q(f).
$$
To check that $\Gamma_q$ is a centralizer, pick $a\in\ell_\infty$ and $f\in\ell_q^0$ with polar decompositions $a=v|a|$ and $f=u|f|$, respectively. Then
$$
\Gamma_q(af)= vu |af|^{q/r}\Gamma( \,|af|^{q/p} \,),\qquad
a\Gamma_q( f)= a  u |f|^{q/r}\Gamma( \,|f|^{q/p} \,)
$$
and the centralizer property of $\Gamma_q$ follows from H\"older inequality and the corresponding estimate of $\Gamma$. Quasilinearity now follows from Lemma~\ref{lem:autom}.\medskip

We finally prove that ``triviality at $q$'' implies ``triviality at $p$''.
The key point is that if $\Phi_p$ and $\Phi_q$ are as in Lemma~\ref{lem:compa}, then
$\Hom(\ell_r,   Z(\Phi_q))=  Z(\Phi_p)$. In our ``finitistic'' setting, the preceding identity should be interpreted as follows:
Every $(g,f)\in  \ell_p\oplus_{\Phi_p}\ell_p^0$ induces a homomorphism from $\ell_r$ to $Z(\Phi_q)$ by the rule
$$h\in\ell_r\longmapsto (gh,fh)\in Z(\Phi_q)$$
and the quasinorm of $(g,f)$ in $\ell_p\oplus_{\Phi_p}\ell_p^0$ is equivalent to the quasinorm of the operator defined two lines ago. This defines an operator (actually a homomorphism) $\ell_p\oplus_{\Phi_p}\ell_p^0\To \Hom(\ell_r,   Z(\Phi_q))$ with dense range whose extension to $Z(\Phi_p)$ is the required isomorphism. Thus, keeping the same notations, we have a commutative diagram
\begin{equation}\label{dia:iden}
\xymatrix{
0\ar[r] & \ell_p  \ar@{=}[d]\ar[r] & Z(\Phi_p)  \ar[d] \ar[r] & \ar@{=}[d] \ell_p \ar[r]&  0\\
0\ar[r] & \Hom(\ell_r, \ell_q) \ar[r] & \Hom(\ell_r, Z(\Phi_q) ) \ar[r] & \Hom(\ell_r, \ell_q) \ar[r]& 0
}
\end{equation}
This can be seen in \cite[Corollary~2]{cabe}, where the definition of a centralizer is slightly different from ours.

That said, assume that $\Phi_q\,\Psi_q$ induces a trivial sequence of quasi Banach modules. Go back to the basic criterion and write down the ``splitting'' diagram of modules and homomorphisms
\begin{align}\label{dia:splitq}
\xymatrixrowsep{1.5pc}
\xymatrix{ & 0 \ar[d]  & 0\ar[d]\\
 & \ell_q   \ar@{=}[r]\ar[d] & \ell_q \ar[d] \\
0\ar[r] &  Z(\Phi_q)  \ar[d] \ar[r] & \square \ar[d] \ar[r] & \ell_q \ar[r] \ar@{=}[d]&  0\\
0\ar[r] & \ell_q \ar[r]\ar[d] &  Z(\Psi_q)   \ar[r]\ar[d] & \ell_q \ar[r]& 0\\
& 0 & 0
}
\end{align}
Applying $\Hom(\ell_r,-)$ to the above diagram and using the identifications provided by (\ref{dia:iden}) we obtain another commutative diagram of quasi Banach modules and homomorphisms
\begin{equation*}
\xymatrix{ & 0 \ar[d] & 0\ar[d] \\
 & \ell_p   \ar@{=}[r]\ar[d] & \ell_p \ar[d] \\
0\ar[r] &   Z(\Phi_p) \ar[d] \ar[r] & \Hom(\ell_r,\square) \ar[d] \ar[r] & \ell_p \ar[r] \ar@{=}[d]&  0\\
0\ar[r] & \ell_p \ar[r]\ar[d] &   Z(\Psi_p)  \ar[r]\ar[d] & \ell_p \ar[r]& 0\\
& 0 & 0
}
\end{equation*}
If we want to use this diagram to conclude that the spliced sequence $\Phi_p\, \Psi_p$ is trivial in $\Ext^2_{\ell_\infty}(\ell_p,\ell_p)$ we have to show that the two short sequences passing through $\Hom(\ell_r,\square)$ are exact. This amounts to checking the surjectivity of the two arrows starting at $\Hom(\ell_r,\square)$.

Let us begin with the vertical one. Having a look at Diagram~\ref{dia:splitq} we realize that the map $\Hom(\ell_r,\square)\To Z(\Psi_p)=\Hom(\ell_r,Z(\Psi_q))$ is onto if and only if every homomorphism $u:\ell_r\To Z(\Psi_q)$ lifts to $\square$. And this is indeed the case because otherwise the pull-back diagram
$$
\xymatrix{
0 \ar[r] & \ell_q \ar[r]  \ar@{=}[d] & \square\ar[r]   & Z(\Psi_q) \ar[r] & 0\\
0 \ar[r] & \ell_q \ar[r]  & \PB \ar[u]\ar[r]   & \ell_r \ar[r] \ar[u]^u & 0
}
$$
would provide a nontrivial element of $\Ext_{\ell_\infty}(\ell_r,\ell_q)$. However, the main result of \cite{cabe} shows that $\Ext_{\ell_\infty}(\ell_r,\ell_q)=0$ unless $r=q$. In a similar vein, if the map $\Hom(\ell_r,\square)\To  \ell_p =\Hom(\ell_r, \ell_q)$ fails to be onto, one can produce a nontrivial extension of modules
$$
\xymatrix{
0 \ar[r] & Z(\Phi_q) \ar[r]  & \PB \ar[r]   & \ell_r \ar[r]  & 0
}
$$
using pull-back. Which cannot be since $\Ext_{\ell_\infty}(\ell_r,\ell_q)=0$ implies that $\Ext_{\ell_\infty}(\ell_r, Z)=0$ for every quasi Banach module fitting in a short exact sequence of modules $0\To \ell_q \To Z\To \ell_q\To 0$, in particular for $Z= Z(\Phi_q)$.\end{proof}


\subsection{$\Ext^2(\ell_1,\mathbb K)$ and the role of the ambient category}
We close the Section with an example showing that $\Ext^2$ heavily depends on the ambient category. Note that $\Ext^2_\mathsf B(\ell_1, Y)= 0$ and $\Ext^2_\mathsf{B}(X,\mathbb K)= 0$ for all Banach spaces $X$ and $Y$. Indeed, one even has $\Ext_\mathsf B(\ell_1, Y)= 0$ by the lifting property of $\ell_1$ and $\Ext_\mathsf{B}(X,\mathbb K)= 0$ by the Hahn-Banach theorem.
However:

\begin{proposition}\label{prop:Ext2l1}
$\Ext^2(\ell_1,\mathbb K)\neq 0$.
\end{proposition}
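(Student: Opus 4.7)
The plan is to deduce $\Ext^2(\ell_1,\K)\neq 0$ from Theorem~\ref{th:main} in two steps: first use the reflection principle of Proposition~\ref{prop:trivialornot} to transfer nontriviality from $\Ext^2_{\mathsf B}(\ell_2,\ell_2)$ down to $\Ext^2(\ell_1,\ell_1)$ in the quasi Banach category, and then push out along a linear functional $\ell_1\to\K$ to land in $\Ext^2(\ell_1,\K)$.

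For the first step, I would apply the second item of Proposition~\ref{prop:trivialornot} with $p=2$ and $q=1$ to the centralizer pair $(\widetilde\Omega_2,\Omega_2)$. By Theorem~\ref{th:main} their concatenation is nontrivial in $\Ext^2_{\mathsf B}(\ell_2,\ell_2)$, so the reflected pair $(\widetilde\Omega_1,\Omega_1)$ gives a nontrivial four-term sequence of quasi Banach spaces
$$0\To \ell_1\To \widetilde Z_1\To Z_1\To \ell_1\To 0.$$

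For the second step, I would push this four-term sequence out along the sum functional $s:\ell_1\to\K$, $s(x)=\sum_n x_n$. The push-out of the leftmost short sequence is induced by the scalar-valued quasilinear map $s\circ\widetilde\Omega_1:\ell_1\to\K$, which is a Ribe-type functional (a chunked variant of Ribe's original construction); call the resulting twisted sum $R$. This yields a four-term sequence of the advertised form
$$0\To \K\To R\To Z_1\To \ell_1\To 0$$
with $B=Z_1$.

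The main obstacle is establishing that this push-out remains nontrivial. Since pushing a nontrivial $\Ext^2$ class along an arbitrary map may well produce a trivial class, one must exploit the specific form of $s$, $\widetilde\Omega_1$ and $\Omega_1$. By Lemma~\ref{lem:crit-operative} (equivalently Lemma~\ref{lem:H}), nontriviality is equivalent to the Ribe-type functional $s\circ\widetilde\Omega_1$ admitting no quasilinear extension to $Z_1=\ell_1\oplus_{\Omega_1}\ell_1^0$. I expect this non-extendability to follow from an argument paralleling Section~\ref{sec:counter}: using adequate partitions of a finite block of the unit basis of $\ell_1$ and the symmetries of $\Omega_1$, one derives a scalar-valued estimate analogous to~(\ref{eq:nutcracker}) that contradicts the quasilinear extension bound. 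Executing this combinatorial estimate in the $p=1$ setting, where the Hilbertian orthogonality exploited in the proof of Theorem~\ref{th:main} is unavailable, is the hardest step of the argument.
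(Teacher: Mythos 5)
Your first half — reflect down to $p=1$ via Proposition~\ref{prop:trivialornot}(b) and then push out the leftmost term along the sum functional $S:\ell_1\to\K$ — matches the paper exactly, and your reduction of nontriviality to the non-extendability of the scalar-valued quasilinear map $\phi = S\circ\Phi$ to $\ell_1^0\oplus_\Psi\ell_1^0$ (via Lemma~\ref{lem:crit-operative}) is also the right criterion. The gap is in the closing step, and it is a genuine one: you propose to re-run the combinatorial machinery of Section~\ref{sec:counter} in the $p=1$, $\K$-valued setting, candidly flagging that the Hilbertian estimates are no longer available. That route is not what the paper does, and nothing in your outline indicates how the scalar-valued analogue of estimate~(\ref{eq:nutcracker}) would actually be produced; pushing along $S$ collapses the $\ell_2$-norm lower bound of Lemma~\ref{lem:rs} to a single scalar, and it is far from clear that a nontrivial lower bound survives.

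The idea you are missing is Lemma~\ref{lem:phiPhi}, which is where the real work happens. Using the Kalton--Roberts theorem ($c_0$ is a $K$-space), the paper shows that any quasilinear $\phi:X^0\to\K$ on the finitely supported elements of a quasi-Banach $\ell_\infty$-module determines, uniquely up to strong equivalence, a centralizer $\Phi:X^0\to\ell_1$ with $|\phi(x)-\langle\Phi(x),1_\N\rangle|\leq C\|x\|$. This sets up a correspondence between minimal extensions of $X$ and module extensions $0\to\ell_1\to Z\to X\to 0$. With it the argument closes without any new combinatorics: if $\gamma$ were a quasilinear extension of $\phi$ to $\ell_1^0\oplus_\Psi\ell_1^0$, apply Lemma~\ref{lem:phiPhi} to $\gamma$ to get a centralizer $\Gamma$ into $\ell_1$; its restriction to $\ell_1^0$ is an associated centralizer for $\phi$, hence strongly equivalent to $\Phi$ by the uniqueness part; so $\Gamma$ would be a quasilinear extension of $\Phi$ to $\ell_1^0\oplus_\Psi\ell_1^0$, contradicting $\Phi\,\Psi\nsim0$ in $\Ext^2(\ell_1,\ell_1)$, which you already established. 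In short, the paper transfers the already-proved nontriviality structurally rather than re-deriving it, and without that structural lemma your plan stalls precisely at the step you yourself identify as the hardest.
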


The proof is based on the idea that, for most quasi-Banach modules $X$, minimal extensions of $X$ (defined at the end of Section~\ref{sec:crit}) are in correspondence with extensions of quasi Banach modules $0\To\ell_1\To Z\To X\To 0$. We will state and prove this just for the particular case that we need to carry out the proof of the Proposition.

Suppose that $X$ is an $\ell_\infty$-module. Let us say that $x\in X$
is finitely supported if there is a \emph{finite} $A\subset \N$ such that $x=1_A x$. The smallest $A$ for which the preceding identity holds is called the support of $x$ and will be denoted by $\supp(x)$. There is no conflict of interest with the usual meaning of support of a (finitely supported) sequence. A moment's reflection suffices to realize that:
\begin{itemize}
\item The finitely supported elements of a module $X$ constitute a (not generally closed and quite often dense) submodule that we denote by $X^0$.
\item Any  morphism, continuous or not, preserves finitely supported elements.
\item If $x$ has finite support, then the ``orbit'' $\ell_\infty\, x=\{ax:a\in\ell_\infty\}$ is a finite-dimensional subspace (actually submodule) of $X$.
\end{itemize}

\begin{lemma}\label{lem:phiPhi}
Let $X$ be a quasi-Banach module over $\ell_\infty$ and let $X^0$ be the submodule of finitely supported elements of $X$. Let $\phi: X^0\To\K$ be a quasilinear map. Then there is a quasilinear centralizer $\Phi: X^0\To \ell_1$ such that
\begin{equation}\label{eq:phi-Phi}
\big{|}\phi(x)-\langle\Phi(x), 1_\N\rangle\big{|}\leq C\|x\|
\end{equation}
for some $C$ and all $x\in X^0$. Any two centralizers having this property are strongly equivalent on $X^0$.
\end{lemma}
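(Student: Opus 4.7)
My plan is to construct $\Phi$ by correcting the naive $\ell_\infty$-equivariant assignment, then handle uniqueness by a direct duality calculation.

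Define $\Phi_0(x)(n) := \phi(e_n x)$ for $n \in \supp x$, zero otherwise. Since $e_n \cdot a = a(n) e_n$ in $\ell_\infty$ and $\phi$ is homogeneous, one has
\[
\Phi_0(ax)(n) = \phi(a(n) e_n x) = a(n)\phi(e_n x) = (a\Phi_0(x))(n),
\]
so $\Phi_0(ax) = a\Phi_0(x)$ identically. Unfortunately $\Phi_0$ is not a good candidate for $\Phi$: the pointwise quasilinear estimate $|\Phi_0(x+y)(n) - \Phi_0(x)(n) - \Phi_0(y)(n)| \leq Q(\|e_n x\| + \|e_n y\|)$ sums over $n$ to an uncontrolled quantity, and a check on $X = \ell_1$ with $\phi$ of Ribe type shows $\Phi_0(x) = \sum_n x(n)\phi(e_n)e_n$ is linear in $x$ while $\phi$ is not, so (\ref{eq:phi-Phi}) fails.

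For existence, I plan to replace $\Phi_0$ by a correction built inside the finite-dimensional orbit module $M_x := \ell_\infty \cdot x \subseteq X$. Viewing $\phi_x : a \in \ell_\infty(\supp x) \mapsto \phi(ax)$ as a quasilinear functional on the finite-dimensional algebra $\ell_\infty(\supp x)$, my goal is to produce, uniformly in $x$, a linear functional $L_x \in \ell_1(\supp x)$ such that $|\phi(ax) - \langle L_x, a\rangle| \leq C\|a\|_\infty \|x\|$, by averaging $\phi_x$ over the compact unitary group of $\ell_\infty(\supp x)$ and exploiting the amenability of $\ell_\infty$. Setting $\Phi(x)(n) := L_x(n)$ for $n \in \supp x$ then yields $\sum_n \Phi(x)(n) = \langle L_x, 1_{\supp x}\rangle$, and since $1_{\supp x}\, x = x$ this differs from $\phi(x)$ by at most $C\|x\|$, establishing (\ref{eq:phi-Phi}). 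Homogeneity and the centralizer estimate for $\Phi$ then reduce to the corresponding canonical features of the selection $x \mapsto L_x$.

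For uniqueness, set $\Psi := \Phi_1 - \Phi_2$, a centralizer valued in $\ell_1$ with $|\langle \Psi(x), 1_\N\rangle| \leq 2C\|x\|$. For any $a \in \ell_\infty$ with $\|a\|_\infty \leq 1$, decompose
\[
\langle \Psi(x), a\rangle = \langle a\Psi(x), 1_\N\rangle = \langle \Psi(ax), 1_\N\rangle + \langle a\Psi(x) - \Psi(ax), 1_\N\rangle.
\]
The first summand is at most $2C\|ax\| \leq 2C\|x\|$ by hypothesis; the second is at most $\|a\Psi(x) - \Psi(ax)\|_1 \leq C_\Psi \|x\|$ by the centralizer estimate applied to $\Psi$. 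Taking the supremum over $\|a\|_\infty \leq 1$ yields $\|\Psi(x)\|_1 \leq (2C + C_\Psi)\|x\|$, so $\Phi_1$ and $\Phi_2$ are strongly equivalent on $X^0$.

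The hard part is the uniform construction of $L_x$: a naive linear-plus-bounded decomposition of a quasilinear scalar function on a $d$-dimensional space comes with constant depending on $d$, whereas the lemma demands uniformity in $d = |\supp x|$. Surmounting this, presumably through a careful application of amenability of $\ell_\infty$ (so that the centralizer constant of $\phi$ alone controls the splitting constant of $\phi_x$), is the technical heart of the existence part.
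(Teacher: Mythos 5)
Your uniqueness argument is essentially identical to the paper's: set $\Psi = \Phi_1 - \Phi_2$, use $\|\Psi(x)\|_1 = \sup_{\|a\|_\infty \leq 1}|\langle\Psi(x), a\rangle|$, and bound each pairing by the centralizer estimate plus the hypothesis. That part is fine.

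Your existence plan also has the right skeleton and it matches the paper's: for fixed $x$ consider the quasilinear scalar function $\phi_x(a) = \phi(ax)$ on $c_0(\supp x)$, split it into a linear part plus a bounded part with a constant independent of $|\supp x|$, let $L_x \in \ell_1(\supp x)$ represent the linear part, and set $\Phi(x) = L_x$. The problem is the mechanism you propose for the uniform splitting. You suggest ``averaging $\phi_x$ over the compact unitary group of $\ell_\infty(\supp x)$ and exploiting the amenability of $\ell_\infty$,'' hoping that ``the centralizer constant of $\phi$ alone controls the splitting constant of $\phi_x$.'' This does not work and cannot work. First, $\phi$ is only assumed quasilinear, not a centralizer, and $\K$ is not an $\ell_\infty$-module in any useful sense, so there is no equivariance estimate to feed into an averaging argument. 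Second, and more fundamentally, averaging a scalar-valued quasilinear map over a group does not produce a linear map at bounded distance; it can only symmetrize, and a symmetrized quasilinear map is still quasilinear. The actual fact needed here is that $c_0$ is a $\mathcal K$-space: there is an absolute constant $C$ (independent of dimension) such that every quasilinear $\varphi : c_0 \to \K$ lies at distance $C\,Q(\varphi)$ from a linear functional. This is the Kalton--Roberts theorem, a hard combinatorial result about nearly additive set functions (with $C \leq 200$ over $\R$), and it is precisely what the paper invokes. You correctly identify the uniform-in-$d$ constant as the technical heart, but the amenability route you sketch is not a proof of that fact; you must cite (or reprove) Kalton--Roberts. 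With that input, your construction of $\Phi(x)$ and your verification of (\ref{eq:phi-Phi}) via $1_{\supp x}x = x$ go through, though you should also spell out the checks that $\Phi$ is quasilinear and a centralizer (which, as in the paper, both follow from the two-variable estimate $|\phi(ax) - \langle\Phi(x), a\rangle| \leq C\|x\|\|a\|_\infty$ rather than ``reduce to canonical features of the selection'').
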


\begin{proof}[Proof of Lemma~\ref{lem:phiPhi}]
There is no loss of generality in we assume that $X$ is ``contractive'': $\|ax\|\leq \|a\|_\infty\|x\|$ for all $a\in\ell_\infty$ and $x\in X$ and that $\phi:X^0\To\K$ has $Q(\phi)\leq 1$.

Let us first dispose of the ``uniqueness'' part. Suppose that, for $i=1,2$, one has centralizers $\Phi_i:  X^0\To \ell_1$ such that
\begin{equation*}
\big{|}\phi(x)-\langle\Phi_i(x), 1_\N\rangle\big{|}\leq C_i\|x\|.
\end{equation*}
Note that
$$
\|\Phi_1(x)-\Phi_2(x)\|_1= \sup_{\|a\|_\infty\leq 1} \big{|}\langle\Phi_1(x)-\Phi_2(x), a\rangle\big{|}.
$$
But
\begin{align*}
&\langle\Phi_i(x), a\rangle=  \langle a \Phi_i(x), 1_\N\rangle, \\
& \big{|} \langle a \Phi_i(x), 1_\N\rangle - \langle  \Phi_i(ax), 1_\N\rangle  \big{|}\leq \|\Phi_i(ax)-a\Phi_i(x)\|_1\leq C(\Phi_i)\|a\|_\infty\|x\|,\\
& \big{|}\phi(ax)-\langle\Phi_i(ax), 1_\N\rangle\big{|}\leq C_i\|ax\|\leq C_i\|a\|_\infty \|x\|.
\end{align*}
Combining,
$$
\|\Phi_1(x)-\Phi_2(x)\|_1\leq \big{(} C_1+C_2+C(\Phi_1)+C(\Phi_2)\big{)}\|x\|.
$$
We pass to the construction of $\Phi$ which involves the fact that $c_0$ is a $K$-space: there is an absolute constant $C$ such that, for every quasilinear map $\varphi: c_0\To\K$ there is a linear map $\ell:c_0\To \K$ satisfying $|\varphi(a)-\ell(a)|\leq CQ(\phi)\|a\|_\infty$, where $Q(\varphi)$ is the quasilinearity constant of $\varphi$. 
This is a really deep result by Kalton and Roberts \cite{k-r}. They got $C\leq 200$ for $\K=\R$. It follows that $C\leq 200\sqrt{2}$ for $\K=\mathbb C$.

We consider the two-variable function $c_0\times X^0\To\K$ given by $(a,x)\longmapsto\phi(ax)$. Clearly, for each fixed $x\in X^0$, the function $a\in c_0\longmapsto\phi(ax)\in\K$ is quasilinear, with constant at most $\|x\|$ and so there is a linear map, depending on $x$, say $\ell_x:c_0\To \K$ such that
$$
|\phi(ax)-\ell_x(a)\|\leq C\|x\|\|a\|_\infty,\qquad(x\in X^0, a\in c_0).
$$
But, for fixed $x\in X^0$, the orbit $c_0x$ is finite dimensional and so $a\in c_0\longmapsto \phi(ax)\in\mathbb K$ is bounded. This implies that $\ell_x$ is a bounded functional on $c_0$ and therefore it is implemented by some $f_x\in\ell_1$ in the obvious way.

If we choose such an $f_x$ homogeneously, we obtain a homogeneous mapping $\Phi:X^0\To\ell_1$ such that
\begin{equation}\label{eq:homo}
|\phi(ax)-\langle\Phi(x), a\rangle\|\leq C\|x\|\|a\|_\infty,\qquad(x\in X^0, a\in c_0).
\end{equation}
Let us check that this $\Phi$ is the centralizer we are looking for.
\smallskip

$\bullet$ First of all, it is clear that (\ref{eq:homo}) implies (\ref{eq:phi-Phi}).
\smallskip

$\bullet$ $\Phi$ is quasilinear. Lemma~\ref{lem:autom} does not apply here. Since $\|f\|_1=\sup |\langle f, a\rangle|$, where $a$ runs on the unit ball of $c_0$, it suffices to check that if $x,y\in X^0$, then
$$
| \langle \Phi(x+y)-\Phi(x)-\Phi, a\rangle |\leq Q\|a\|_\infty\big{(} \|x\|+ \|y\|\big{)}
$$
which immediately follows from (\ref{eq:homo}) and the quasilinearity of $\phi$.
\smallskip

$\bullet$ $\Phi$ is a centralizer. It suffices to see that $\Phi$ obeys an estimate of the form $\|\Phi(bx)-b\Phi(x)\|_1\leq C\|b\|_\infty\|x\|$ for finitely supported $b$. But for each $a\in c_0$ we have
\begin{align*}
|\langle \Phi(bx), a\rangle- \phi(abx)| \leq C\|bx\|\|a\|_\infty\leq  C\|b\|_\infty\|x\|\|a\|_\infty.
\end{align*}
On the other hand, $\langle b\Phi(x), a\rangle= \langle \Phi(x), ab\rangle$, so
$$
|\langle b \Phi(x), a\rangle- \phi(abx)| \leq C\|abx\|\leq  C\|b\|_\infty\|x\|\|a\|_\infty,
$$
which completes the proof.
\end{proof}

\begin{proof}[Proof of Proposition~\ref{prop:Ext2l1}]
The main result and Proposition~\ref{prop:trivialornot}(b) imply that there are centralizers $\Phi, \Psi$ on $\ell_1$ for which the spliced sequence
\begin{equation}\label{dia:Ext2l1}
\xymatrixrowsep{0.5pc}
\xymatrix{
0\ar[r] & \ell_1 \ar[r] & Z(\Phi) \ar[rr]  \ar[dr]  && Z(\Psi)  \ar[r] & \ell_1\ar[r]& 0\\
& && \ell_1 \ar[ur]  }
\end{equation}
is not trivial in $\Ext^2(\ell_1, \ell_1)$. Let $S:\ell_1\To\K$ be the sum functional. We will prove that the push-out sequence
\begin{equation*}
\xymatrix{0\ar[r] & \ell_1 \ar[r]\ar[d]_ S & Z(\Phi)\ar[r]  \ar[d]  & Z(\Psi) \ar@{=}[d] \ar[r] & \ell_1 \ar@{=}[d]\ar[r]& 0 \\
0\ar[r] & \K \ar[r] & \PO \ar[r]    & Z(\Psi)  \ar[r] & \ell_1\ar[r]& 0 }
\end{equation*}
provides a nonzero element of $\Ext^2(\ell_1,\mathbb K)$. Note that the push-out space can we represented as $\PO=\mathbb K\oplus_\phi \ell_1$, where $\phi$ is a quasilinear extension of the function $x\in\ell_1^0\longmapsto\langle\Phi(x), 1_\N\rangle \in\K$, so the lower sequence in the preceding diagram is
\begin{equation*}
\xymatrixrowsep{0.5pc}
\xymatrix{
0\ar[r] & \K \ar[r] & \K\oplus_\phi \ell_1 \ar[rr]  \ar[dr]  && Z(\Psi)  \ar[r] & \ell_1\ar[r]& 0\\
& && \ell_1 \ar[ur]  }
\end{equation*}
Now, it suffices to see that the map $\phi: \ell_1^0\To\K$ cannot be extended to
$\ell_1^0\oplus_\Psi \ell_1^0= (\ell_1\oplus_\Psi \ell_1 )^0$ keeping quasilinearity.

Assume $\gamma$ is such an extension and let $\Gamma:  \ell_1^0\oplus_\Psi \ell_1^0\To \ell_1$ be the quasilinear centralizer provided by Lemma~\ref{lem:phiPhi}. Since $\gamma$ is an extension of $\phi$, the restriction of $\Gamma$ to $\ell_1^0$ is strongly equivalent to $\Phi$ and so the sequence in (\ref{dia:Ext2l1}) should be trivial in $\Ext^2(\ell_1,\ell_1)$, which is not the case.
\end{proof}

\end{document}